\documentclass[11pt,letterpaper]{amsart}

\usepackage{mathrsfs}
\usepackage{amssymb}
\usepackage{amsmath}
\usepackage{amsthm}
\usepackage{amsfonts}
\usepackage{extarrows}
\usepackage{xy}
\xyoption{all}

\usepackage{color}
\usepackage[colorlinks, linkcolor=red, anchorcolor=green, citecolor=blue]{hyperref}
\usepackage[pagewise]{lineno}%\linenumbers

\pagestyle{myheadings}\markboth{\footnotesize\rm\sc
Haibo Yang and Qixiang  Yang
} {\footnotesize\rm\sc %Peetre conjecture on
Besov hierarchical spaces
}

\allowdisplaybreaks

\renewcommand\hat{\widehat}
\renewcommand\tilde{\widetilde}

\newtheorem{theorem}{Theorem}[section]
\newtheorem{lemma}[theorem]{Lemma}
\newtheorem{corollary}[theorem]{Corollary}

\newtheorem{example}[theorem]{Example}
\theoremstyle{definition}
\newtheorem{remark}[theorem]{Remark}
\newtheorem{definition}[theorem]{Definition}

\newtheorem {open problem posed by Peetre} [theorem] {Open problem posed by Peetre}
\newtheorem {open problem} [theorem] {Open problem}
\numberwithin{equation}{section}

\numberwithin{equation}{section}

%%%%%%%%%%%%%%%%%%%%%%%%%%%%%%%%%%%%%%%%%%%%%%%%%%%%%%%%%%%%%%%%%%%%%
%%%%%%%%%%%%%%%%%%%%%%%%%%%%%%%%%%%%%%%%%%%%%%%%%%%%%%%%%%%%%%%%%%%%%%

\numberwithin{equation}{section}

\begin{document}
\UseRawInputEncoding
\arraycolsep=1pt
\title[Interpolation spaces of Besov hierarchical spaces]{\large\bf
Interpolation spaces of Besov hierarchical spaces and non-linearities defined by vertex K functional and grid topology}

\let\thefootnote\relax\footnotetext{\hspace{-0.15cm}
{\it 2020 Mathematics Subject Classification}. {46B70, 46E35, 42B35.}
\endgraf{\it Key words and phrases.}
Real interpolation; Besov hierarchical space; Wavelet basis and grid;  nonlinear functional structure;
nonlinear topology structure.
\endgraf $^\ast$\,Corresponding author
}

\author{
% Dunyan Yan, Haibo Yang $^{\ast}$ and Qixiang  Yang\\
%Haibo Yang, Qixiang  Yang $^{\ast}$ and Dunyan Yan \\
Qixiang  Yang, Haibo Yang $^{\ast}$
}

%\date{  }
\maketitle

\vspace{-0.8cm}

\begin{center}
\begin{minipage}{13cm}\small
{\noindent{\bf Abstract.}
In 1967, Peetre proposed to give a precise description of the real interpolation space for
Besov hierarchical spaces $l^{s,q}(A)$.
In 1974, Cwikel proved that
the Lions-Peetre formula for $(l^{q_0}(A_0), l^{q_1}(A_1))_{\theta,r}$
have no reasonable generalization for any $r\neq q$.
In this paper,
we apply wavelets to transform the study of real interpolation space into
{\bf the study of nonlinear functional structure and nonlinear topological structure}.
We solve completely Peetre's longstanding open problem.
}

\end{minipage}
\end{center}

%\tableofcontents

%%%%%%%%%%%%%%%%%%%%%%%%%%%%%%%%%%%%%%%%%%%%%%%%%%%%%%%%%%%%%%%%%%

%%%%%%%%%%%%%%%%%%%%%% section 1 %%%%%%%%%%%%%%%%%%%%%%%%%%%%%%%%%%
%%%%%%%%%%%%%%%%%%%%%%%%%%%%%%%%%%%%%%%%%%%%%%%%%%%%%%%%%%%%%%%%%%%%%
\section{Main theorem}\label{s1}

\subsection{Peetre's long standing open problem}
%\hskip\parindent
Besov spaces $\dot{B}^{s,q}_{p}=\dot{l}^{s,q}(L^{p})$ and
$B^{s,q}_{p}=l^{s,q}(L^{p})$ have been studied heavily at 1950s.
If we replace $L^{p}$ with pseudo-Banach space $A$, then we get Besov hierarchical spaces $\dot{l}^{s,q}(A)$.
Peetre's book \cite{Peetre} introduced new thoughts on Besov space
and has received much attention in the fields of differential equations and harmonic analysis.
See also Bergh-L\"ofstr\"om's book \cite{BL}.
Peetre wrote on page 42 of \cite{Peetre}:
The abstract theory of interpolation spaces was created around 1960 by Lions, Gagliardo, Calderon, Krein and others.
The complex spaces are studied in Calderon \cite{Cal}. The real spaces are studied in Lions-Peetre \cite{LP}
and Peetre \cite{Peetre}.
Many results on interpolation have been highlighted by leading journals.
See \cite{ B, CZ, C,  D, FRS, FS, Hunt1, Litt, Lorentz, R, SW, SZ}.

Given $0<\theta<1, s_0,s_1\in \mathbb{R},0<q_0,q_1,r\leq \infty$ and $A_0,A_1$ Banach spaces.
For $\frac{1}{q}= \frac{1-\theta}{q_0} + \frac{\theta}{q_1}$,
by applying Lions-Peetre's Theorem \cite{LP},
the interpolation spaces
$(l^{s_0,q_0}(A_0), l^{s_1,q_1}(A_1))_{\theta,r}$
are known for $r=q$.
\begin{open problem} \label{con:1}
Given $0<\theta<1, s_0,s_1\in \mathbb{R},0<q_0,q_1,r\leq \infty$ and $A_0,A_1$ Banach spaces.
For $\frac{1}{q}= \frac{1-\theta}{q_0} + \frac{\theta}{q_1}$,
Peetre proposed to find out the expression of
$(l^{s_0,q_0}(A_0),  l^{s_1,q_1}(A_1))_{\theta, r}$ for $r\neq q$.
See his book \cite{Peetre67book} in 1967 or
page 104 of his English translation \cite{Peetre} in 1976.
\end{open problem}
Apart from Lorentz space, there's been a lot of research on Hardy spaces and so on.
But for the general indicators, there is no systematic study.
For $(l^{q_0}(A_0), l^{q_1}(A_1))_{\theta,r}$, in 1974, Cwikel \cite{C} proved that
{\bf the Lions-Peetre formula
have no reasonable generalization for any $r\neq q$ where $\frac{1}{q}= \frac{1-\theta}{q_0} + \frac{\theta}{q_1}$.}
In this paper, we develop new skills to solve completely this Open problem \ref{con:1}.
The K functional $K(t, f, \dot{l}^{s_0,q_0}(A_0), \dot{l}^{s_1,q_1}(A_1))$
in the case of homogeneous space is considered.
For the case of a non-homogeneous space,
its K functional $K(t, f, l^{s_0,q_0}(A_0), l^{s_1,q_1}(A_1))$ can be similarly obtained.
Our main theorem is to work out K functionals for Besov hierarchical spaces
through vertex K functional and grid topology structure
by analyzing the nonlinear structure of K functional.
The idea is that for a distribution $f$,
we get a set of sequences $\{f_{j,\gamma}\}_{j\in \mathbb{Z}, \gamma\in \dot{\Gamma}_{j}}$
uniquely by wavelet decomposition.
The underlying assumption of Peetre's problem is that
norms such as $\| \{f_{j,\gamma}\}_{\gamma\in \dot{\Gamma}_{j}}\| _{A_{i} (\dot{\Gamma}_{j})}$,
various K functional $K(t,\{f_{j,\gamma}\}_{\gamma\in \dot{\Gamma}_{j}}, A_0, A_1)$
and quantities associated with such quantities are known.
We start with these quantities defined in terms of wavelet coefficients to give
the K functional $K(t, f, \dot{l}^{s_0,q_0}(A_0), \dot{l}^{s_1,q_1}(A_1)).$
We express the K functional $K(t,f, \dot{l}^{s_0,q_0}(A_0),  \dot{l}^{s_1,q_1}(A_1))$
as some composite function of some known K functional and some K functional related to $(A_0,  A_1)$.
Our results here cover all the known results
related to the expression of the real interpolation of Besov type spaces.

We replace the Banach space hypothesis in Open problem \ref{con:1} with more general pseudo-Banach spaces.
We divide into four different index relations to describe the corresponding interpolation space for pseudo-Banach spaces.
\begin{theorem}\label{th:main}
Given $0<\theta<1, 0< q_0,q_1,r \leq\infty$, $s_0,s_1\in \mathbb{R}$.
If pseudo-Banach spaces $A_0, A_1$ satisfy \eqref{eq:absolute},
then
$$(\dot{l}^{s_0,q_0}(A_0), \dot{l}^{s_1,q_1}(A_1))_{\theta,r}
=\{\int^{\infty}_{0} [ t^{-\theta} K(t, f, A_0, A_1)] ^{\eta}
\frac{dt}{t} \}^{\frac{1}{\eta}},$$
where the K functional
$K(t, f, \dot{l}^{s_0,q_0}(A_0), \dot{l}^{s_1,q_1}(A_1))$ can be formulated in four cases as follows:

(i) If $A_0=A_1=A$, then
$$K(t, f, \dot{l}^{s_0,q_0}(A_0), \dot{l}^{s_1,q_1}(A_1))= K(t,F, \dot{l}^{s_0,q_0}(\mathbb{Z}), \dot{l}^{s_1,q_1}(\mathbb{Z})),$$
where $F=(F_j)_{j\in \mathbb{Z}}$, $F_{j}=\|f_{j,\gamma}\|_{A(\dot{\Gamma}_{j})}$
and $K(t,F, \dot{l}^{s_0,q_0}(\mathbb{Z}), \dot{l}^{s_1,q_1}(\mathbb{Z}))$ is known.
See Remark \ref{re:lsq} and Theorem \ref{th:5.1}.

(ii) If $0<q_0=q_1\leq \infty$, then $X_{j}(t) $ and $Y_{j}(t)$
determined by K functional
$K_{V}(t, \{f_{j,\gamma}\}_{\gamma\in \dot{\Gamma}_{j}}, A_0, A_1)$ is known
and $$K(t, f, \dot{l}^{s_0,q_0}(A_0), \dot{l}^{s_1,q_1}(A_1))$$
can be obtained by using $X_{j}(t) $ and $Y_{j}(t)$
in Theorem \ref{th:5.2}.

(iii) If $0<q_0\neq q_1<\infty$, then
$K_{\infty}(t, \{f_{j,\gamma}\}_{\gamma\in \dot{\Gamma}_{j}}, A_0, A_1)$ is known.
Theorem \ref{th:11.4} tells us that, if $q_0<q_1$ and $g^{0}_{j}=(2^{js_0}|f_{j,\gamma}|)_{\gamma\in \Gamma_{j}}$, then
$$\begin{array}{rcl}
K_{\infty}(t,f, \dot{l}^{s_0,q_0}(A_0), \dot{l}^{s_1, q_1}(A_1))
&=& \{\sum\limits_{j\in \mathbb{Z}} s2^{j\tilde{s}^{0}} K_{\infty} (\tau, g^{0}_j, A_1(\dot{\Gamma}_j), A_0(\dot{\Gamma}_j))^{q_1} \}^{\frac{1}{q_0}}.
\end{array}$$
If $q_1<q_0$ and $g^{1}_{j}=(2^{js_1}|f_{j,\gamma}|)_{\gamma\in \Gamma_{j}}$, then
$$\begin{array}{rcl}
K_{\infty}(t,f, \dot{l}^{s_0,q_0}(A_0), \dot{l}^{s_1, q_1}(A_1))&=&
t \{\sum\limits_{j\in \mathbb{Z}} s^{2} 2^{j\tilde{s}^{1} } K_{\infty} (\tau, g_j^{1}, A_0(\dot{\Gamma}_j), A_1(\dot{\Gamma}_j))^{q_0} \}^{\frac{1}{q_1}}.
\end{array}$$

(iv) If $0<q_0 \neq q_1 $ and $q_0 q_1=\infty$, then the conditional K functional
$G(s,\{f_{j,\gamma}\}_{\gamma\in \dot{\Gamma}_{j}}, A_0, A_1)$ is defined in the Equation \eqref{eq:con.fun},
$\tilde{G}(t,f)=t^{-1} G(t,f)$
and $H(t,f)$ is defined in the Equation \eqref{eq:H}.
Further Theorem \ref{th:4} tells us that
\begin{equation*}
\begin{array}{l}K_{\infty}(t,f, \dot{l}^{s_0,q_0} (A_0), \dot{l}^{s_1, \infty} (A_1))= H(t,f) \tilde{G}(H(t,f), f).
\end{array}
\end{equation*}
\begin{equation*}
\begin{array}{l}K_{\infty}(t,f, \dot{l}^{s_0,\infty} (A_0), \dot{l}^{s_1, q_1} (A_1))
= t K_{\infty}(t^{-1},f, \dot{l}^{s_1, q_1} (A_1), \dot{l}^{s_0,\infty} (A_0)).
\end{array}
\end{equation*}

\end{theorem}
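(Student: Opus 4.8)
The plan is to reduce the real interpolation K-functional for $\dot l^{s_0,q_0}(A_0)$ and $\dot l^{s_1,q_1}(A_1)$ to quantities that live entirely on the wavelet side, exploiting that wavelet decomposition is an isometry onto the sequence space. The starting observation is that the optimal splitting $f=f^{(0)}+f^{(1)}$ in the definition of $K(t,f)$ may be taken to respect the wavelet grid, i.e.\ $f^{(i)}=\sum_{j,\gamma}c^{(i)}_{j,\gamma}\psi_{j,\gamma}$, so that the minimisation decouples over the levels $j$ and, within a level, over $\gamma\in\dot\Gamma_j$ only through the $A_i$-norms. Thus everything is governed by the \emph{vertex} K-functional $K_V(t,\{f_{j,\gamma}\}_{\gamma\in\dot\Gamma_j},A_0,A_1)$ and its variants $K_\infty$, $G$, $\tilde G$, $H$, together with the discrete nested-$\ell^q$ structure with weights $2^{js_i}$. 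I would prove the four cases in increasing order of difficulty, since each reuses the optimisation scheme of the previous one.

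First, for case (i) with $A_0=A_1=A$: here the only free parameter in the level-$j$, $\gamma$-block is the common $A$-norm, so the vector K-functional collapses to the scalar weighted sequence spaces and $K(t,f,\dot l^{s_0,q_0}(A),\dot l^{s_1,q_1}(A))=K(t,F,\dot l^{s_0,q_0}(\mathbb Z),\dot l^{s_1,q_1}(\mathbb Z))$ with $F_j=\|f_{j,\gamma}\|_{A(\dot\Gamma_j)}$; this is then cited from Theorem \ref{th:5.1}. Next, case (ii) with $q_0=q_1=q$: the outer $\ell^q$-sum is the same for both spaces, so by a standard Holder/superposition argument the global minimisation is achieved by minimising level by level, reducing to Theorem \ref{th:5.2}, where the level-wise optimal transfer functions $X_j(t),Y_j(t)$ come from $K_V$. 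The substantive work is in cases (iii) and (iv), where the two outer exponents differ: the exchange between the $\ell^{q_0}$ and $\ell^{q_1}$ norms forces a genuinely nonlinear optimisation across levels. Here I would first establish, via Theorem \ref{th:11.4} and the conditional K-functional $G$ from \eqref{eq:con.fun}, that the optimal split at level $j$ is described by a single scalar $s=s_j(t)$ (a Lagrange-type multiplier balancing the marginal cost of moving mass between the two components), so that the minimisation over the infinite product reduces to a one-dimensional constrained optimisation; solving that constraint produces the implicit function $H(t,f)$ and the composite formula $K_\infty(t,f)=H(t,f)\,\tilde G(H(t,f),f)$. The second displayed identity in case (iv) is then the elementary duality-scaling relation $K(t,f,B_0,B_1)=t\,K(t^{-1},f,B_1,B_0)$ applied with $B_0=\dot l^{s_0,\infty}(A_0)$, $B_1=\dot l^{s_1,q_1}(A_1)$.

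The main obstacle I expect is justifying that the global infimum over all admissible decompositions $f=f^{(0)}+f^{(1)}$ is attained (or approached) by decompositions that are diagonal in the wavelet grid \emph{and} that at each level the optimal allocation is the one predicted by $K_V$ (resp.\ $K_\infty$, $G$); in the mixed-exponent cases (iii)--(iv) this is not a simple convexity argument because the functional $\{G_j\}_j\mapsto\|(\cdot)\|_{\ell^{q_0}}\ \text{vs}\ \|(\cdot)\|_{\ell^{q_1}}$ is not jointly convex when $q_i<1$ or when $q_0\neq q_1$. I would handle this by a two-sided estimate: the upper bound for $K$ follows by exhibiting the explicit grid-diagonal competitor built from the level-wise optimisers; the lower bound follows by showing any competitor can be replaced, level by level and without increasing either component norm, by its grid-diagonal ``projection,'' using that the wavelet coefficients of $f^{(0)}$ and $f^{(1)}$ must sum to those of $f$ and that the $A_i(\dot\Gamma_j)$-norms are monotone under the natural truncations (this is exactly where hypothesis \eqref{eq:absolute} on the pseudo-Banach spaces enters). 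Once the reduction to a single scalar balance equation per level is in place, identifying $H$ and verifying the composite formula is a bookkeeping computation with the weights $2^{j\tilde s^i}$ and the scaling $\tau=\tau(t,j)$, which I would not grind through here.
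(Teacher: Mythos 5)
Your overall skeleton does track the paper's: discretize the K functional via the wavelet isomorphism (Theorem \ref{th:6.1}, following Devore--Popov), reduce to nonnegative coefficients clipped to $[0,|f_{j,\gamma}|]$ and then to the vertex functional $K_V$ using the quasi-triangle inequality and hypothesis \eqref{eq:absolute} (Theorems \ref{th:5.3} and \ref{th:vertex}), and then split into the four index regimes, with case (i) collapsing to the scalar main grid $\mathbb{Z}$ and case (ii) decoupling level by level because the outer exponents agree. Your description of the lower-bound mechanism (replacing an arbitrary competitor by its grid ``projection'' without increasing either component norm, which is where \eqref{eq:absolute} enters) is the same as the paper's cuboid/vertex argument. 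Case (iv) as you describe it --- a conditional functional $G$ with the constraint $\|f\|_{A_1(\Lambda_1)}\le s$, the monotone function $\tilde G(t,f)=t^{-1}G(t,f)$, and an implicit threshold $H(t,f)$ solving $\tilde G(s,f)=t$, plus the commutativity $K(t,f,B_0,B_1)=tK(t^{-1},f,B_1,B_0)$ --- is also essentially the paper's Theorem \ref{th:4}.

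The genuine gap is in case (iii). Your opening claim that the minimisation ``decouples over the levels $j$'' is false when $q_0\neq q_1$ --- this is precisely Cwikel's obstruction, and the paper's Example \ref{ex:m} illustrates why a per-level analysis cannot determine the global infimum. Your proposed fix, a ``Lagrange-type multiplier'' $s=s_j(t)$ chosen level by level, does not produce the stated formula: the parameter $s$ in Theorem \ref{th:11.4} is a single \emph{global} quantity $s=F(f,X_0,X_1,\tfrac1{q_0},\tfrac1{q_1},t)$ determined implicitly by the whole function through $s=t^{\rho_1}K_\infty(t,f,\cdot,\cdot)^{\rho_0-\rho_1}$, and the characteristic shape of the answer (an outer exponent $\tfrac1{q_0}$ wrapping a sum of $q_1$-th powers of layer K functionals) only emerges from the paper's actual device, which you never identify: pass to the power (exponential) spaces $\|f\|_{X_i}=\|f\|^{q_i}_{\dot l^{s_i,q_i}(A_i)}$ and $\|f\|_{Y^j_i}=\|f\|^{q_i}_{A_i(\dot\Gamma_j)}$, for which both outer quasi-norms become \emph{additive} over $j$ so that $K_\infty$ genuinely commutes with the summation (Lemma \ref{lem:ab}), and then translate back and forth between the K functional of a power space and that of the original space via the nonlinear change of variables of Lemma \ref{lem.11.1} (applied once on the full grid, Corollary \ref{cor:11.1}, and once on each layer grid, Corollary \ref{cor:11.2}). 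You also conflate the tools of (iii) and (iv): the conditional functional $G$ of \eqref{eq:con.fun} and the threshold $H$ of \eqref{eq:H} belong only to the case $q_0q_1=\infty$, where the power-space trick is unavailable, while Theorem \ref{th:11.4} rests on the power-space identities and not on $G$. As written, your plan for (iii) would not yield the two displayed formulas of the statement.
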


Our main Theorem has completely solved Peetre's longstanding Open problem \ref{con:1} on hierarchical psedo-Banach spaces.
%For a long time,
%the study of K functionals has essentially returned to the monolayer structure in some sense.
%The lack of understanding of the structure of the K functional shows
%the lack of understanding of the nonlinear structure of the functions,
%which hinders the further promotion of Marcinkiwicz interpolation theory,
%and also hinders the further application of real interpolation theory.
Theorem \ref{th:main} easily leads to the following three applications.

\begin{remark}
We may take $A_0$ and $A_1$ as Lebesgue spaces $L^{p_0}$ and $L^{p_1}$ or
Lorentz spaces $L^{p_0,\tau_0}$ and $L^{p_1,\tau_1}$.
$(l^{s_0,q_0}(A_0), l^{s_1,q_1}(A_1))$
become Besov spaces $(\dot{B}^{s_0,q_0}_{p_0}, \dot{B}^{s_1,q_1}_{p_1})$
or Besov-Lorentz spaces $(\dot{B}^{s_0,q_0}_{p_0,\tau_0}, \dot{B}^{s_1,q_1}_{p_1,\tau_1})$.
Hence our Theorem here not only solved another open problem posed by Peetre
and but also extend it to much more complex situation.
See Peetre's book \cite{Peetre67book} in 1967 or
page 110 of his English translation \cite{Peetre} in 1976.
\end{remark}

\begin{remark}
More than 70 years after Lorentz's work \cite{Lorentz},
it is fortunate that we can systematically generalize it
by wavelet grid topology.
The interpolation spaces $(l^{s_0,q_0}(A_0), l^{s_1,q_1}(A_1))_{\theta,r}$ in this article
systematically extend Lorentz spaces
defined by rearrangement function in Lorentz \cite{Lorentz}.
Hunt \cite{Hunt1, Hunt} studied the K functional for Lorentz spaces
and Marcinkiewiez's theorem is systematically extended Lorentz spaces.
Our Theorem extend it to which for general Besov hierarchical spaces $l^{s,q}(A)$
and we can consider more general Marcinkiewiez's theorem.
\end{remark}

\begin{remark}
By our main Theorem \ref{th:main}, the inclusion relationship between Besov space and Lorentz space can be conveniently considered.
As a direct application, one can generalize the real interpolation inequalities of Bahouri-Cohen \cite{BaC} and  Chamorro-Lemari\'e \cite{CL}
to more general indices.
Ou-Wang \cite{OW} considered Stein's conjecture.
One can try to extend the estimation of $Ef$ in Ou-Wang's theorem to Lorentz space by real interpolation idea.
\end{remark}

\subsection{Backgroud and motivations
}
%\hskip\parindent
The first reason we're focusing on K functionals right now is that
K functionals come in the form of cost functions in artificial intelligence \cite{HMY, HWJJ}.
The cost function involves the process of minimising two quantities,
hence randomness occurs.
As the spatial structure that defines costs becomes more complex,
so does the structure of randomness.
Hence, it is difficult to separate data into two categories that meet the requirements.
%It is therefore very difficult to classify the data into two categories.
In the case of big data,
the amount of computation required to deal with such extreme value problems is enormous.
In artificial intelligence,
the method of gradient descent and distributed operation are used to deal with such extreme value problems.
We consider the equivalent expression of the K functional.
The explicit expression of equivalent K functional in this paper is obtained by dividing the wavelet coefficients into two parts.
This helps in the classification of data, and thus has value in artificial intelligence.

Our second motivation comes from the fact that the study of interpolation space can improve the study of continuity of operators.
It is difficult to obtain additional log-magnitude estimates.
Because of improving log-magnitude regularity,
Bony's para-product and Lions' compensation compact theory are paid more attention.
Lacey-Thiele \cite{LacTh} solved Calderon's conjecture
by obtaining some degree of log-magnitude regularity using the model sum method.

We take Fourier transform $\mathcal{F}$ as an example to see how complex and real interpolation can improve operator continuity.
Let $L^{p}$ denote a Lebesgue space and let $L^{p'}$ its conjugate space and let $L^{p,q}$ be Lorentz space.
We know $$ \mathcal{F}: L^2 \rightarrow L^2 \mbox{\, and \,} \mathcal{F}: L^1 \rightarrow C_0 .$$
For $1<p<2$, we have to use interpolation to figure out what kind of space $ \mathcal{F}(L^p)$ is in.
In fact, by using complex interpolation, Hausdorff-Young theorem says
\begin{equation} \label{eq:cF} \mathcal{F}: L^p \rightarrow L^{p'}. \end{equation}
Afterwards, Stein \cite{Stein} proposed to consider the cone case of \eqref{eq:cF},
recently, Ou-Wang \cite{OW} solved such open problem.
One can extend such results to Lorentz spaces.
Its another refinement of \eqref{eq:cF} usually associated with Paley.
By applying real interpolation, we have
\begin{equation} \label{eq:rF} \mathcal{F}: L^p \rightarrow L^{p',p}. \end{equation}
For $1<p<2$, $L^{p',p}\subsetneq L^{p'}$.
Lorentz space $L^{p',p}$ is some log level stronger than Lebesgue space $L^{p',p'}=L^{p'}$.
{\bf Compared with complex interpolation, real interpolation improves the regularity of some log order.}
The reason why real interpolation space can improve operator continuity is that
K functional is defined by the lower bound of norm,
which reflects the minimization of energy.
See Peetre \cite{Peetre67book} in 1967.

The third motivation for our interest in interpolation spaces is that
many phenomena related to interpolation have received widespread attention,
but the specific expressions of many general interpolation spaces are not clear.
As early as 1911, Schur \cite{Schur} considered the interpolation properties of operators.
In 1926, M. Riesz \cite{R} proved the first version of the Riesz-Thorin theorem.
In 1948, Salem-Zygmund \cite{SZ} and Thorin  \cite{Thorin}
appeared the study of interpolation on $H^{p}$ spaces.
In 1950, Lorentz \cite{Lorentz} introduced Lorentz spaces.
In 1964, Hunt \cite{Hunt} revealed the relation between the real interpolation space of Lebesgue space and Lorentz space.
The study of interpolation space greatly enriches function space and operator theory and
has important applications in differential equations.
The study of interpolation space has greatly aroused the attention of mathematicians.
J. Bergh-J. L\"ofstr\"om wrote on page 170 of their book \cite{BL} that
Notices A.M.S introduces 12 open problems of interpolation introduced by 12 mathematicians
under the heading ``Problems in interpolation of operations and applications I-II" in
Notices Amer. Math. Soc. See \cite{PCFGLM, SHSTBR}.
These 12 mathematicians are {\bf in order},
J. Peetre, W. Connett, J. Fourier, J. E. Gilbert, J. L. Lions, Benjamin Muckenhoupt,
E. M. Stein, Richard A. Hunt, Robert C. Sharpley, A. Torchinsky, Colin Bennett and N. M. Riviere.
These problems are aimed at {\bf the special phenomena in the process of interpolation}.
The interpolation results have received much attention
and these problems have received the attention they deserve.
See \cite{AK, AKMNP5,BL, BC, BCT,  CD, DP,DY, HS, Holmstedt, HP, LoP,
Peetre67, PS, SaZ, ST, SW, Thorin,  Triebel,TriebelB,YDC,YCP}.

%%%%%%%%%%%%%%%%%%%%%%%%%
%%%%%%%%%%%%%%%%%%%%
%%%%%%%%%%%%%%%%%%%%%%%%%%

\subsection{Main skills and structure of this paper}

In 1974, Cwikel proved that
the Lions-Peetre formula for $(l^{q_0}(A_0), l^{q_1}(A_1))_{\theta,r}$
have no reasonable generalization for any $r\neq q$.
To get the most out of this article,
let's start with the main new techniques developed in this article.
\begin{remark}
We can solve Peetre's longstanding open problem on Besov hierarchical spaces
and obtain Main Theorem \ref{th:main} because
we have adopted different matching methods for different indicators
and the following three new techniques：
(1) {\bf Wavelets have both the basis property and the fully discrete grid property}.
The basis properties allow us to change nonlinearities of the functional,
and the fully discrete mesh allows us to classify grid and consider the grid topology.
Littlewood-Paley decomposition does not have such properties.
(2) {\bf Analysis of nonlinear functional structures based on wavelet coefficients}.
Devore-Popov have considered the relation of K functional
between Besov spaces and the corresponding discrete spaces for frequency in Theorem 6.1 of \cite{DP}.
We consider the more general Besov hierarchical spaces
and use wavelet to generalize the Devore-Popov's result to the wavelet functional case.
Then, by analyzing the wavelet coefficient characteristics of the approximate K-functional,
the wavelet functional is limited to the cuboid functional,
and the cuboid functional is obtained.
Further, based on the quasi-trigonometric inequality for the norm of two function spaces,
a new equivalent functional is obtained by functional calculus: vertex K functional.
According to vertex functional,
the problem of functional is transformed into coefficient data classification problem,
which provides a theoretical foundation for the study of real interpolation structure through lattice topology.
(3) {\bf Analysis of nonlinear lattice topology based on wavelet lattice structure}.
We studied it in four cases based on vertex functional.
Firstly, for $A_0=A_1=A$,
we  take the $A-$norm on the layer grid $\dot{\Gamma}_j$
and then the interpolating norm on the main grid $\mathbb{Z}$.
Secondly, if $A_0\neq A_1$ and $0<q_0=q_1\leq \infty$,
then we use both the structure of the main grid $\mathbb{Z}$
and the quantities $X_j$ and $Y_j$ related to the K functional on the layer grid $\dot{\Gamma}_j$
to give the equivalent K-functional on the full grid $\dot{\Lambda}$.
Thirdly,  if $A_0\neq A_1$ and $0<q_0\neq q_1<\infty$,
then we use power spaces to change the topology structure
and get four intermediate spaces on the full grid $\dot{\Lambda}$ and the layer grid $\dot{\Gamma}_j$ respectively.
The corresponding K functional on $\dot{\Lambda}$ is obtained by using
the commutativity of the infinitesimal functional about the intermediate space and the summation of the main grid $\mathbb{Z}$.
Fourthly and lastly, we consider the cases where $A_0\neq A_1$, $0<q_0\neq q_1$ and $q_0 q_1=\infty$.
We introduce some condition functional under the condition
defined by the nonlinear quantity of the condition functional about the full grid.
The real interpolation space we get is the space with nonlinear topological structure.

The real interpolation space of Besov hierarchical spaces is given by a series of new ideas:
(1) wavelet basis and wavelet grid,
(2) nonlinear functional structure and vertex K functional,
(3) power spaces and nonlinear topology structure  and
(4) condition functional and nonlinearity.

\end{remark}

%\vspace{-0.296cm}
Then we'll look at the structure of this article.
The rest of this article is structured as follows:
In Section \ref{SEC2},
we introduce some preparation knowledge on function spaces and wavelets.
In Section \ref{sec:2}, we present first some preliminaries on K functional and Lorentz spaces.
Then we present some basic knowledge on real interpolation spaces of Besov type spaces and
Peetre's longstanding open problem.

In Section \ref{SEC5},
we consider nonlinear transformations of functional.
In Subsection \ref{sec:4.1}, we discuss
wavelet functional and cuboid functional.
Firstly, we discretize K functional using wavelet and get wavelet functional.
Then, by analyzing the wavelet coefficient characteristics of the approximation wavelet functional,
it is obtained that cuboid functional and we get wavelet basis is unconditional basis of interpolation space.
This is the extension and refinement of the main Theorem 6.1 of Devore-Popov in \cite{DP}.
Based on the characteristics that both spatial norms satisfy the quasi-triangle inequality,
in Subsection \ref{SEC6}, through functional calculus,
we obtain vertex K functional.
The calculation of K functional is converted into classification of data generated by wavelet coefficients.
This is the theoretical foundation for considering the lattice topological structure of K functional.

\vspace{0.1cm}
For the remaining three sections,
we consider the topology of real interpolation in four cases
and give concrete expressions of K-functional.
In Section \ref{SEC7},
we consider the case $A_0=A_1$ or $q_0=q_1$.
In Section \ref{SEC:5.1},
we consider the case $A_0=A_1=A$
and take $A$-norm on the layer grid and get sequence $(F_j)_{j\in \mathbb{Z}}$.
For $(F_j)_{j\in \mathbb{Z}}$, we take interpolation norm on the main grid
and get the specific expression of K-functional.
In Section \ref{SEC:5.2}, we obtain $X_j(t)$ and $Y_j(t)$ on a layer grid by using vertex K functional,
and then consider their power relation,
and then give the topology structure of K functional in the case
$0<q_0=q_1\leq \infty$.
In Section \ref{sec:6x}, we calculate the K functional for $A_0\neq A_1$ and $0<q_0\neq q_1<\infty$.
In Subsection \ref{subsec:6.1}, we introduce the main techniques in analysing the topology structure.
We establish the relationship of topological structure of power spaces and the topological compatibility of intermediate spaces.
In Subsection \ref{subsec:6.2},
the corresponding K functional topology is obtained in three steps based on vertex K functional.
We first use the power relation to give the K functional of $(Y_0^j, Y_1^j)$,
and then use the commutativity of the functional and the summation
to get the K functional of $(X_0,X_1)$.
Finally, the K functional of $(\dot{l}^{s_0,q_0}(A_0), \dot{l}^{s_1,q_1}(A_1))$
is obtained by using the power relation again.
In Section \ref{sec:77}, we calculate the K functional for $A_0\neq A_1$ and $q_0  q_1= \infty$.
We use conditional G functionals and the nonlinearity of threshold of conditional functional
to give concrete expressions for K functionals.

\section{Function spaces and wavelets}\label{SEC2}

In this section, we introduce all function spaces relevant to this paper and their wavelet characterizations.

\subsection{Distributions and Littlewood-Paley decomposition}
After the development of the 1970s, most function spaces form the theory of systems
by using Littlewood-Paley decomposition.
Triebel \cite{TriebelB} systematically classifies them as Besov spaces and Triebel-Lizorkin spaces.
Morrey spaces are introduced in 1938.
In this century, Carleson measure and Hausdorff capacity are systematically applied to the study of function spaces.
And systematically form Besov Morrey spaces, Triebel-Lizorkin-Morrey spaces and
their predual spaces Besov-Hausdorff spaces and Triebel-Lizorkin-Hausdorff spaces.
See \cite{SW, TriebelB, Yang1,YCP} and \cite{YSY}.
In this paper, we only find out the real interpolation spaces of Besov hierarchical spaces
without considering Carleson measure and Hausdorff capacity.
Due to the existence of minimal energy processes,
the concrete form of most real interpolation spaces is very complicated.

Let $S$ be the space of all Schwartz functions on $\mathbb{R}^{n}$
and let $S_{0}$ be the space of all Schwartz functions $f$ on $\mathbb{R}^{n}$
such that $\int x^{\alpha} f(x) dx=0, \forall \alpha\in \mathbb{N}^{n}$.
Homogeneous and non-homogeneous Besov type spaces are based on distribution theory.
The space of all tempered distributions on $S(\mathbb{R}^n)$
which is equipped with the weak-$\ast$ topology is denoted by $\mathcal{S}'(\mathbb{R}^n)$.
Respectively, the space of all tempered distributions on $S_{0}(\mathbb{R}^{n})$
which is equipped with the weak-$\ast$ topology is denoted by
$\mathcal{S}_{0}'(\mathbb{R}^n)$.
Hence $\mathcal{S}'(\mathbb{R}^{n})\subset \mathcal{S}_{0}'(\mathbb{R}^n).$
Denote the space of all polynomials on $\mathbb{R}^{n}$ by $P(\mathbb{R}^{n})$,
sometimes one denotes $\mathcal{S}_{0}'(\mathbb{R}^n)=\mathcal{S}'(\mathbb{R}^{n})\backslash P(\mathbb{R}^{n})$.
Given a nonnegative function $\widehat{\varphi}(\xi)\in S(\mathbb{R}^{n})$ such that
${\rm supp}\;\widehat{\varphi}\subset\{\xi\in \mathbb{R}^{n}:|\xi|\leq 2\}$ and $\hat{\varphi}(\xi)=1$ if $|\xi|\leq\frac{1}{2}$.
Define
$$\varphi_{u}(x)=2^{n(u+1)}\varphi(2^{u+1}x)-2^{nu}\varphi(2^{u}x).$$
Then the family of functions $\hat{\varphi}(\xi), \{\hat{\varphi}_{u}(\xi)\}_{u\in \mathbb{Z}}$ satisfy that

\begin{equation*}
\left\{ \begin{aligned}
&\hat{\varphi}(\xi)+\sum\limits_{u\geq 0}\hat{\varphi}_{u}(\xi)=1,\;{\rm for}\;{\rm all} \;\xi\in \mathbb{R}^{n}.&\\
&\sum\limits_{u=-\infty}^{+\infty}\hat{\varphi}_{u}(\xi)=1,\;{\rm for}\;{\rm all} \;\xi\in \mathbb{R}^{n} { \rm \; and \;} \xi\neq 0.&
\end{aligned}
\right.
\end{equation*}
Further,
\begin{equation*}
\left\{ \begin{aligned}
&{\rm supp}\;\hat{\varphi}_{u}\subset\{\xi\in \mathbb{R}^{n},\;\frac{1}{2}\leq 2^{-u}|\xi|\leq2\}.& \\
&|\hat{\varphi}_{u}(\xi)|\geq C>0, \;{\rm if} \;\frac{1}{2}<C_{1}\leq 2^{-u}|\xi|\leq C_{2}<2.&\\
&|\partial^{k}\hat{\varphi}_{u}(\xi)|\leq C_{k}2^{-u|k|}, \;{\rm for}\; {\rm any}\;k\in \mathbb{N}^{n}.&
\end{aligned}
\right.
\end{equation*}

If $f\in S_{0}'(\mathbb{R}^{n})$, then $\forall u\in \mathbb{Z}$, define $ f^{u}=\varphi_{u}\ast f$.
If $f\in S'(\mathbb{R}^{n})$, then $\forall u\geq 1$, define $ f_{u}=\varphi_{u}\ast f$
and define $f_{0}= (\varphi+ \varphi_0)\ast f. $
Where $f^{u}, f_{u}$ are called the $u-$th dyadic block of the Littlewood-Paley decomposition of $f$.
%Without causing confusion, we mix $f^{u}$ and $f_{u}$.

For $s\in \mathbb{R}, 0<p\leq \infty$,
denote $f$ belongs to the Sobolev spaces $\dot{W}^{s,p}$,
if $(-\Delta)^{-\frac{s}{2}}f \in L^{p}$.
Besov spaces are defined by $l^{s,q}(L^{p})$, the weighted $l^{s,q}$ norm of the sequence of Lebesgue norms $L^{p}$ over a ring,
or equivalents, by $l^{q}(\dot{W}^{s,p})$, the sequence $l^{q}$ norm of the Sobolev norm $\dot{W}^{s,p}$ over a ring.
A more complex space than Besov space is to replace $L^p$ with a more general  $A$.
We recall here the definition of Besov type spaces $\dot{l}^{s,q}(A)$ and $l^{s,q}(A)$.
See \cite{CDL, Peetre, Triebel, TriebelB, Yang1}.
That is to say,

\begin{definition}\label{de1.1}
Let  $0< p, q\leq\infty$ and $s\in \mathbb{R}$. We have\\
{\rm(\romannumeral1)} $f(x)\in B^{s,q}_{p}=l^{s,q}(L^p)$,  if $f\in S'(\mathbb{R}^{n})$ and  $\left[\sum\limits_{u\geq 0}2^{us q}\|f_{u}(x)\|_{L^{p}}^{q}\right]^{\frac{1}{q}}<\infty$.\\
{\rm(\romannumeral2)}
$f(x)\in \dot{B}^{s,q}_{p}=\dot{l}^{s,q}(L^p)$,   if $f\in S_{0}'(\mathbb{R}^{n})$ and   $\left[\sum\limits_{u\in \mathbb{Z}}2^{us q}\|f^{u}(x)\|_{L^{p}}^{q}\right]^{\frac{1}{q}}<\infty$.\\
{\rm(\romannumeral3)}
$f(x)\in l^{s,q}(A)$,   if $f\in S'(\mathbb{R}^{n})$ and   $\left[\sum\limits_{u\geq 0}2^{us q}\|f^{u}(x)\|_{A}^{q}\right]^{\frac{1}{q}}<\infty$.\\
{\rm(\romannumeral4)}
$f(x)\in \dot{l}^{s,q}(A)$,   if $f\in S_{0}'(\mathbb{R}^{n})$ and   $\left[\sum\limits_{u\in \mathbb{Z}}2^{us q}\|f^{u}(x)\|_{A}^{q}\right]^{\frac{1}{q}}<\infty$.\\
When $q=\infty$,  it should be replaced by the supremum.
\end{definition}

For $0<p<\infty$ and $0<q\leq \infty$, we have
$$\|f^u\|_{L^{p}}\sim \|f^{u}\|_{\dot{F}^{0,q}_{p}} \sim \|f^{u}\|_{\dot{B}^{0,q}_{p}} \mbox{\, and \,}
\|f^{u}\|_{L^{\infty}} \sim \|f^{u}\|_{\dot{B}^{0,q}_{\infty}}.$$
We recall the definition of  Besov-Lorentz spaces $\dot{B}^{s,q}_{p,r}= \dot{l}^{s,q}(L^{p,r})$
which have been studied in \cite{BL, Peetre, YCP}.
Let $E\subset \mathbb{R}^{n}$, we denote $|E|$ the Lebesgue measure of $E$.
\begin{definition}\label{de1.2}
Assume that $0< p, q,r\leq\infty$, $s\in \mathbb{R}$ and $u,v\in \mathbb{Z}$. Then\\
(\romannumeral1)  $f(x)\in \dot{B}^{s,q}_{p,r}$, if $f\in S_{0}'(\mathbb{R}^{n})$ and $$\left[\sum\limits_{u\in \mathbb{Z}}2^{uqs}\left(\sum\limits_{v\in \mathbb{Z}}2^{rv}|\{x\in \mathbb{R}^{n}:|f^{u}(x)|>2^{v}\}|^{\frac{r}{p}}\right)^{\frac{q}{r}}\right]^{\frac{1}{q}}<\infty.$$
(\romannumeral2)  $f(x)\in B^{s,q}_{p,r}$, if $f\in S'(\mathbb{R}^{n})$ and  $$\left[\sum\limits_{u\geq 0}2^{uqs}\left(\sum\limits_{v\in \mathbb{Z}}2^{rv}|\{x\in \mathbb{R}^{n}:|f_{u}(x)|>2^{v}\}|^{\frac{r}{p}}\right)^{\frac{q}{r}}\right]^{\frac{1}{q}}<\infty.$$
As $p=\infty$ or $q=\infty$ or $r=\infty$,  it should be modified by  supremum.
\end{definition}

The definition of the above spaces are independent of the choice of  $\varphi_{u}$,
see \cite{BL,TriebelB,Yang1,Yang2,YCP}.

\subsection{Wavelets and Besov hierarchical spaces}

The norm of the traditional function space reflects a linear structure.
Littlewood-Paley decomposition and wavelet decomposition can almost replace each other when only such a structure is considered.
{\bf But the interpolation of the function,
which uses the lowest energy, has a nonlinear structure.}
Littlewood-Paley decomposition $f\rightarrow \{f*\phi_u\}$  considers only the discretization of frequencies,
not decomposition for position and oscillation.
Our proof of real interpolation must rely on the vertex K functional.
If only Littlewood-Paley decomposition is used, we cannot get vertex K functionals.

We use regular orthogonal tensorial wavelets. See \cite{Meyer, Yang1, YCP}.
For dimension 1, let $\Phi^{0}$ be the father wavelet and let $\Phi^{1}$ be the mother wavelet.
For dimension $n$, let directional set $\Xi= \{0,1\}^{n}$. For $ \epsilon\in \Xi$, denote $\Phi^{\epsilon}(x) = \prod\limits^{n}_{i=1} \Phi^{\epsilon_{i}}(x_{i})$.
For $j\in \mathbb{Z}, \epsilon\in \Xi, k\in \mathbb{Z}^{n}$, let
$\Phi^{\epsilon}_{j,k}(x) = 2^{\frac{nj}{2}} \Phi^{\epsilon}( 2^{j} x-k).$
$\epsilon$ denotes the property of whether the integral of the direction function is zero for each coordinate axis,
$j$ is a quantity related to frequency, $k$ is a quantity related to position.
To characterize both homogeneous spaces and non-homogeneous spaces, we need the following grid notations:
$$\dot{\Xi}= \{ \epsilon: \epsilon\in \{0,1\}^{n} \backslash \{(0,\cdots,0)\}\},
\Xi_{0}= \Xi  {\rm \, and \, for \,} j\geq 1, \Xi_{j}=\dot{\Xi}$$
$$\dot{\Gamma}= \{ \gamma= (\epsilon, k), \epsilon\in \dot{\Xi}, k\in \mathbb{Z}^{n}\},
\Gamma = \{ \gamma= (\epsilon, k), \epsilon\in \Xi_{0}, k\in \mathbb{Z}^{n}\}.
$$
$$ \Gamma_{0} = \{(0,\gamma), \gamma\in \Gamma\},
{\rm\, and \, for \,} j\geq 1, \Gamma_{j} = \{(j,\gamma), \gamma\in \dot{\Gamma}\}.$$
$$ \forall j\in \mathbb{Z},  \dot{\Gamma}_{j} = \{(j,\gamma), \gamma\in \dot{\Gamma}\},
\mathbb{N}= \{j\in \mathbb{Z}, j\geq 0\}.$$
$$\dot{\Lambda}= \{(j,\gamma), j\in \mathbb{Z}, \gamma\in \dot{\Gamma}\} {\rm \, and \,}
\Lambda = \{(j,\gamma), j\in \mathbb{N}, \gamma\in \Gamma_{j}\}.$$

We don't distinguish between $(\epsilon,j,k)$ and $(j,\gamma)$ and denote
$\Phi_{j,\gamma}(x) = \Phi^{\epsilon}_{j,k}(x)$.
For function $f$, denote $$f_{j,\gamma}= \langle f, \Phi_{j,\gamma}\rangle .$$
When we consider homogeneous spaces, we use $\dot{\Lambda}$.
When we consider non-homogeneous spaces, we use $\Lambda$.

We specifically point out that $j$ represents the frequency layer lattice of the object being studied,
and we call $\mathbb{Z}$ the main grid.
We denote $\Gamma_{j}$ the layer grid which represents the direction and position lattice.
We call $\dot{\Lambda}$ the full grid.
In this paper, we consider not only the wavelet coefficients corresponding to the functional,
but also the topological structures of the three meshes $\mathbb{Z}, \dot{\Gamma}_{j}$ and $\dot{\Lambda}$.
By using wavelets knowledge in \cite{Meyer, Yang1},
the Theorem \ref{lem:dtod} below provides the basis for studying the discretization of the general distribution spaces.
\begin{theorem}\label{lem:dtod}
If we use Meyer wavelets, then wavelet theory shows \\
(i) $f\rightarrow \{f_{j,\gamma}\}_{(j,\gamma)\in \Lambda}$ is an isomorphic mapping from $S'(\mathbb{R}^{n})$ to $\mathbb{C}^{\Lambda}$.
\\
(ii) $f\rightarrow \{f_{j,\gamma}\}_{(j,\gamma)\in \dot{\Lambda}}$ is an isomorphic mapping from $S'_{0}(\mathbb{R}^{n})$ to $\mathbb{C}^{\dot{\Lambda}}$.
\end{theorem}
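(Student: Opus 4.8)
The plan is to reduce both statements to standard facts from the Littlewood--Paley--Meyer wavelet theory, as developed in Meyer's book and in \cite{Yang1}. The two assertions are dual in spirit: (i) concerns the non-homogeneous tempered distributions $S'(\mathbb{R}^n)$ with the full family of father/mother wavelets indexed by $\Lambda$, while (ii) concerns the homogeneous class $S_0'(\mathbb{R}^n)=S'(\mathbb{R}^n)\backslash P(\mathbb{R}^n)$ with only the mother wavelets (the directions $\epsilon\in\dot\Xi$) indexed by $\dot\Lambda$. I would treat (i) first and then explain the modifications for (ii).

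For (i), the first step is to recall that the Meyer wavelet system $\{\Phi_{j,\gamma}\}_{(j,\gamma)\in\Lambda}$ is an orthonormal basis of $L^2(\mathbb{R}^n)$ built from Schwartz functions whose Fourier transforms are compactly supported away from the origin (for the mother wavelets) or near the origin (for the father wavelets). Because each $\Phi_{j,\gamma}\in S(\mathbb{R}^n)$, the coefficient map $f\mapsto f_{j,\gamma}=\langle f,\Phi_{j,\gamma}\rangle$ is well-defined and continuous on $S'(\mathbb{R}^n)$. The second step is to show this map is injective: if all $f_{j,\gamma}=0$, then $f$ annihilates the linear span of the $\Phi_{j,\gamma}$, which is dense in $S(\mathbb{R}^n)$ (a consequence of the fact that Meyer wavelets form an unconditional basis of $S$ in its Fréchet topology — this is the key input from wavelet theory), hence $f=0$ in $S'$. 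The third step is surjectivity: given an arbitrary sequence $(c_{j,\gamma})\in\mathbb{C}^\Lambda$, one must show $\sum_{(j,\gamma)\in\Lambda} c_{j,\gamma}\Phi_{j,\gamma}$ converges in $S'(\mathbb{R}^n)$ with no growth restriction on the coefficients. Here one uses that for a fixed test function $\psi\in S$, the pairings $\langle\Phi_{j,\gamma},\psi\rangle$ decay faster than any polynomial in $(j,k)$ — with the crucial point that the father-wavelet decay at high frequency and the mother-wavelet decay at \emph{all} frequencies (using the vanishing-moment/Fourier-support properties) together dominate any polynomial growth of $c_{j,\gamma}$; summability of the resulting series then defines the distribution. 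Finally, one checks the two maps are mutually inverse and bicontinuous for the weak-$*$ topologies, which is automatic once both directions are established.

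For (ii), the same three steps go through verbatim after replacing $\Lambda$ by $\dot\Lambda$, $S$ by $S_0$, and $S'$ by $S_0'$; the only genuinely new point is that on $S_0'$ one works modulo polynomials, and the mother wavelets $\Phi_{j,\gamma}$ with $\epsilon\in\dot\Xi$ all have vanishing moments of every order, hence pair trivially with polynomials — this is exactly what makes them well-adapted to $S_0'$ and lets the homogeneous dyadic sum $\sum_{j\in\mathbb{Z}}$ over all frequency layers converge in $S_0'$. I expect the main obstacle to be the surjectivity step: carefully justifying convergence in $S_0'(\mathbb{R}^n)$ of $\sum_{(j,\gamma)\in\dot\Lambda} c_{j,\gamma}\Phi_{j,\gamma}$ for \emph{completely arbitrary} coefficient sequences, since this is where one must exploit the full strength of the rapid decay of Meyer wavelets against Schwartz test functions in $S_0$, uniformly across the bi-infinite range of scales $j\in\mathbb{Z}$. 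Once that estimate is in hand, the isomorphism (as topological vector spaces with the product topology on $\mathbb{C}^{\dot\Lambda}$) follows routinely.
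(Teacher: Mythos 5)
The paper offers no proof of this statement: it is asserted with a bare citation to \cite{Meyer, Yang1}, so there is no internal argument to compare yours against line by line. Your first two steps (well-definedness and continuity of the coefficient map because each $\Phi_{j,\gamma}\in S(\mathbb{R}^{n})$, and injectivity via density of the linear span of the Meyer wavelets in $S$, respectively in $S_{0}$) are consistent with the standard theory and are essentially what those references supply.

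The surjectivity step, however, contains a genuine gap that cannot be repaired along the lines you describe. For a fixed test function $\psi\in S$ (or $S_{0}$), the pairings $\langle\Phi_{j,\gamma},\psi\rangle$ decay faster than any \emph{polynomial} in $2^{|j|}$ and $|k|$ --- that is the full strength of ``rapid decay of Meyer wavelets against Schwartz functions'' --- but they do not decay faster than every function of $(j,k)$. Consequently, for a completely arbitrary sequence $(c_{j,\gamma})\in\mathbb{C}^{\Lambda}$, for instance $c_{j,\gamma}=2^{2^{j}}$ along one position per scale, the series $\sum_{(j,\gamma)}c_{j,\gamma}\langle\Phi_{j,\gamma},\psi\rangle$ diverges, and $\sum c_{j,\gamma}\Phi_{j,\gamma}$ defines no tempered distribution. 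The correct classical statement (in Meyer's book and in \cite{Yang1}) is that the coefficient map is a bijection from $S'(\mathbb{R}^{n})$ onto the subspace of sequences of at most polynomial growth in $2^{j}$ and $k$, with the analogous statement for $S_{0}'(\mathbb{R}^{n})$ and $\dot{\Lambda}$; the map is injective into $\mathbb{C}^{\Lambda}$ but not onto it. So either you should prove the theorem with the target replaced by that sequence subspace (which your outline would then accomplish), or, if the statement is read literally as an isomorphism onto all of $\mathbb{C}^{\Lambda}$, no proof exists because the literal claim is false. Your own remark that surjectivity is ``the main obstacle'' is accurate, but the obstacle is not a missing estimate: it is a counterexample.
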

So without causing confusion, we do not distinguish between the function $f$ and the sequence $\{f_{j,\gamma}\}$.
Wavelet functions $\Phi_{j,\gamma}$ are good functions.
For $f= \sum\limits_{j,\gamma} f_{j,\gamma} \Phi_{j,\gamma}$,
only the variation of the wavelet coefficients $f_{j,\gamma}$ affects the norm of the distribution $f$.
The above lemma transforms the nonquantifiable distribution $f\in S'(\mathbb{R}^{n})$ or $f\in S'_{0}(\mathbb{R}^{n})$
into countable dimensional wavelet coefficients  $\{f_{j,\gamma}\}_{(j,\gamma)\in \Lambda}\in C^{\Lambda}$  or
$\{f_{j,\gamma}\}_{(j,\gamma)\in \dot{\Lambda}}\in \mathbb{C}^{\dot{\Lambda}}$.
It allows us to introduce the definition of a function with wavelet coefficients on a discrete set.
\begin{definition}
We say ${\rm Supp}_{w} f \subset E\subset \dot{\Lambda}$,
if $f(x)= \sum\limits_{(j,\gamma)\in E} f_{j,\gamma} \Phi_{j,\gamma}(x)$.
\end{definition}
It provides a theoretical basis for us to give the vertex K functional based on the wavelet coefficients.
Let $\chi(x)$ denote the characteristic function on the unit cube $[0,1]^{n}$.
For $j\in \mathbb{Z}$, denote $f^{j}(x)= 2^{\frac{nj}{2}} \sum\limits_{\gamma\in \dot{\Gamma} } |f_{j,\gamma}| \chi (2^{j}x-k).$
For $j\in \mathbb{N}$, denote $f_{j}(x)= 2^{\frac{nj}{2}} \sum\limits_{\gamma\in \Gamma_{j} } |f_{j,\gamma}| \chi (2^{j}x-k)$
and denote $$f^{\infty}_{j}(x)= 2^{\frac{nj}{2}} \sup\limits_{\epsilon\in \Xi_{j}} \sum\limits_{k\in \mathbb{Z}^{n} } |f_{j,\gamma}| \chi (2^{j}x-k).$$
We recall the wavelet characterization of Besov-Lorentz spaces where $A=L^{p,r}$.
According to \cite{YCP}, we have the following characterization for Besov-Lorentz spaces:
\begin{lemma}\label{lem:Besov-Lorentz}
Assume that $0< p, q,r\leq\infty$, $s\in \mathbb{R}$.

(i) $f(x)= \sum\limits_{(j,\gamma)\in \dot{\Lambda}} f_{j,\gamma} \Phi_{j,\gamma}\in \dot{B}^{s,q}_{p,r} \Leftrightarrow
\sum\limits_{j\in \mathbb{Z}} 2^{jsq} \{ \sum\limits_{u\in \mathbb{Z}} 2^{ur}
|\{x: f^{j}(x)> 2^{u}\}|^{\frac{r}{p}}\}^{\frac{q}{r}} < +\infty.$
Particularly, if $p=r$, then $\dot{B}^{s,q}_{p,r}= \dot{B}^{s,q}_{p}$ and
$$f(x)= \sum\limits_{(j,\gamma)\in \dot{\Lambda}} f_{j,\gamma} \Phi_{j,\gamma}\in \dot{B}^{s,q}_{p} \Leftrightarrow
\sum\limits_{j\in \mathbb{Z}} 2^{jq(s+\frac{n}{2}-\frac{n}{p})} (\sum\limits_{\gamma\in \dot{\Gamma}}
|f_{j,\gamma}|^{p})^{\frac{q}{p}} < +\infty.$$

(ii) $f(x)= \sum\limits_{(j,\gamma)\in \Lambda} f_{j,\gamma} \Phi_{j,\gamma}\in B^{s,q}_{p,r} \Leftrightarrow
\sum\limits_{j\in \mathbb{N}} 2^{jsq} \{ \sum\limits_{u\in \mathbb{Z}} 2^{ur}
|\{x: f_{j}(x)> 2^{u}\}|^{\frac{r}{p}}\}^{\frac{q}{r}} < +\infty. $
Particularly, if $p=r$, then $B^{s,q}_{p,r}= B^{s,q}_{p}$ and
$$f(x)= \sum\limits_{(j,\gamma)\in \Lambda} f_{j,\gamma} \Phi_{j,\gamma}\in B^{s,q}_{p} \Leftrightarrow
\sum\limits_{j\geq 0} 2^{jq(s+\frac{n}{2}-\frac{n}{p})} (\sum\limits_{\gamma\in \Gamma_{j}}
|f_{j,\gamma}|^{p})^{\frac{q}{p}} < +\infty.$$

\end{lemma}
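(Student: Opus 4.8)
The plan is to keep the outer $l^{s,q}$ structure of Definition \ref{de1.2} untouched and to replace, block by block, the Littlewood--Paley piece $\varphi_{u}\ast f$ by the wavelet model function $f^{j}$. I would carry this out in three steps, the bulk of the work being a single--scale comparison.

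\emph{Step 1: reduction to a single scale.} Since we use Meyer wavelets, each $\Phi^{\epsilon}_{j,k}$ is band--limited with $\supp\widehat{\Phi^{\epsilon}_{j,k}}$ contained in a fixed dyadic annulus $\{2^{j-2}\le|\xi|\le 2^{j+2}\}$, so $\varphi_{u}\ast\Phi_{j,\gamma}=0$ whenever $|u-j|>3$. Setting $\Delta_{j}f:=\sum_{\gamma\in\dot\Gamma}f_{j,\gamma}\Phi_{j,\gamma}$ we get $\varphi_{u}\ast f=\sum_{|j-u|\le 3}\varphi_{u}\ast\Delta_{j}f$ and, via a standard reproducing identity, $\Delta_{j}f=\sum_{|u-j|\le 3}\widetilde\varphi_{u}\ast\varphi_{u}\ast f$ for a slightly fattened family $\widetilde\varphi_{u}$. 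Using the quasi--triangle inequality for $L^{p,r}$ and the boundedness of convolution against an $L^{1}$--normalized smooth bump at the matching frequency scale on $L^{p,r}$ (for $0<p<\infty$, $0<r\le\infty$) when applied to functions frequency--localized at that scale, a harmless finite index shift inside the $l^{q}$ sum yields
$$\|f\|_{\dot B^{s,q}_{p,r}}\sim\Big(\sum_{j\in\zz}2^{jsq}\,\|\Delta_{j}f\|_{L^{p,r}}^{q}\Big)^{1/q}.$$

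\emph{Step 2: the single--scale equivalence $\|\Delta_{j}f\|_{L^{p,r}}\sim\|f^{j}\|_{L^{p,r}}$.} For the upper bound, the rapid decay of the Schwartz bump $\Phi^{\epsilon}$ gives $|\Delta_{j}f(x)|\lesssim_{N}2^{nj/2}\sum_{k\in\zn}b_{j,k}(1+|2^{j}x-k|)^{-N}$ with $b_{j,k}:=\sum_{\epsilon\in\dot\Xi}|f_{j,(\epsilon,k)}|$; since $f^{j}=2^{nj/2}\sum_{k}b_{j,k}\mathbf 1_{Q_{j,k}}$ on the pairwise disjoint dyadic cubes $Q_{j,k}=2^{-j}(k+[0,1)^{n})$, choosing $N>n/\min(p,1)$ shows that the distribution function of $\Delta_{j}f$ at level $2^{v}$ is dominated by a geometric sum of distribution functions of $f^{j}$ at comparable levels, hence $\|\Delta_{j}f\|_{L^{p,r}}\lesssim\|f^{j}\|_{L^{p,r}}$. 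For the reverse inequality I would recover the coefficients from $\Delta_{j}f$ by a local reproducing identity (band--limitedness again), which forces $|\Delta_{j}f|\gtrsim 2^{nj/2}b_{j,k}$ on a fixed fraction of each $Q_{j,k}$ once the tails of the neighbouring bumps are absorbed; by the rearrangement invariance of the Lorentz quasi--norm this gives $\|f^{j}\|_{L^{p,r}}\lesssim\|\Delta_{j}f\|_{L^{p,r}}$. Equivalently, this step is the wavelet characterization of the Lorentz space $L^{p,r}$ from \cite{YCP} applied to the frequency--localized function $\Delta_{j}f$.

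\emph{Step 3: discretizing the Lorentz norm and the case $p=r$.} For $0<r<\infty$ the monotonicity of $\lambda\mapsto|\{|g|>\lambda\}|$ gives $\|g\|_{L^{p,r}}^{r}\sim\sum_{u\in\zz}2^{ur}|\{x:|g(x)|>2^{u}\}|^{r/p}$, with the evident supremum replacement for $r=\infty$; substituting $g=f^{j}$ into $\sum_{j}2^{jsq}\|f^{j}\|_{L^{p,r}}^{q}$ produces exactly the right--hand side of (i), and the nonhomogeneous statement (ii) follows verbatim with $\zz$ replaced by $\nn$, $\dot\Gamma$ by $\Gamma_{j}$, $\dot\Lambda$ by $\Lambda$, the low--frequency block $(\varphi+\varphi_{0})\ast f$ being treated like any other band--limited block. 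When $p=r$, disjointness of the cubes, $|Q_{j,k}|=2^{-nj}$, and the finiteness of $\dot\Xi$ (so that $(\sum_{\epsilon}a_{\epsilon})^{p}\sim_{n,p}\sum_{\epsilon}a_{\epsilon}^{p}$) give
$$\|f^{j}\|_{L^{p}}^{p}\sim 2^{njp(1/2-1/p)}\sum_{\gamma\in\dot\Gamma}|f_{j,\gamma}|^{p};$$
multiplying by $2^{jsq}$ and summing over $j$ yields the displayed identity ($\dot B^{s,q}_{p,r}=\dot B^{s,q}_{p}$ with the stated wavelet norm), and likewise for (ii). The main obstacle is Step 2, specifically the lower bound $\|f^{j}\|_{L^{p,r}}\lesssim\|\Delta_{j}f\|_{L^{p,r}}$ in the full range $0<p\le 1$, $0<r\le\infty$, where $L^{p,r}$ is only quasi--normed: the estimate must be run at the level of dyadic level sets rather than on the functional directly, and one needs a genuine local reproducing formula to rule out cancellation among the bumps $f_{j,\gamma}\Phi_{j,\gamma}$ sharing a cube $Q_{j,k}$. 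Everything else is bookkeeping with geometric series and bounded index shifts.
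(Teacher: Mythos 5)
The paper does not actually prove this lemma: it is recalled verbatim from the reference \cite{YCP} (``According to \cite{YCP}, we have the following characterization\dots''), so there is no in-paper argument to compare yours against. Your three-step outline --- almost-orthogonality of the Meyer blocks against the Littlewood--Paley pieces, a single-scale comparison $\|\Delta_j f\|_{L^{p,r}}\sim\|f^j\|_{L^{p,r}}$, and dyadic discretization of the Lorentz quasi-norm together with the $p=r$ computation --- is the standard route to this characterization, and Steps 1 and 3 are sound, including the normalization $2^{jq(s+\frac n2-\frac np)}$ coming from $|Q_{j,k}|=2^{-nj}$ and the finiteness of $\dot\Xi$.

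The one genuine gap is the justification you offer for the lower bound in Step 2. The claim that the local reproducing identity ``forces $|\Delta_j f|\gtrsim 2^{nj/2}b_{j,k}$ on a fixed fraction of each $Q_{j,k}$'' is false as stated: cancellation between neighbouring bumps $f_{j,\gamma}\Phi_{j,\gamma}$ (for instance, coefficients of alternating sign) can make $|\Delta_j f|$ small on most of $Q_{j,k}$ even when $b_{j,k}$ is large, and no pointwise lower bound of this form holds. The correct mechanism is the one you gesture at but do not carry out: orthonormality gives $f_{j,\gamma}=\langle \Delta_j f,\Phi_{j,\gamma}\rangle$, and the rapid decay of $\Phi_{j,\gamma}$ then yields $2^{nj/2}|f_{j,\gamma}|\lesssim \inf_{x\in Q_{j,k}}\bigl(M(|\Delta_j f|^{\tau})(x)\bigr)^{1/\tau}$ for any $0<\tau<\min(p,1)$ (Peetre's maximal inequality for band-limited functions), so that $f^{j}\lesssim \bigl(M(|\Delta_j f|^{\tau})\bigr)^{1/\tau}$ pointwise; the boundedness of the Hardy--Littlewood maximal operator on $L^{p/\tau,r/\tau}$ then closes the lower bound in the full quasi-normed range $0<p\le 1$, $0<r\le\infty$. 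With that substitution Step 2 is complete; as written, your proposal correctly identifies the obstacle but does not actually overcome it.
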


A psedo-Banach space or even Banach space may has no unconditional basis.
For example, $L^{p}$ has no unconditional basis when $0<p\leq 1$ or $p=\infty$.
\begin{remark}
For pseudo-Banach space $A$, $\forall j\in \mathbb{Z}$,
denote $A^{j}=A(\dot{\Gamma}_{j}) = \{f(x)\in A, f(x)=\sum\limits_{\gamma\in \dot{\Gamma}_{j}} f_{j,\gamma}\Phi_{j,\gamma}(x)\}$.
May be $\{\Phi_{j,\gamma}(x)\}_{(j,\gamma)\in \dot{\Lambda} }$ is not an unconditional basis for pseudo-Banach space $A$.
But when we consider $A(\dot{\Gamma}_{j})$, the situation will change.

(i) We know that wavelet basis is not an unconditional basis for $L^{\infty}$ and $L^{p}(0<p\leq 1)$.
But for $0<p<\infty, 0<q\leq \infty$ and $j\in \mathbb{Z}$, we know
$$\|\{f_{j,\gamma}\}_{\gamma \in \dot{\Gamma} }\|_{L^{p}}\sim \|\{f_{j,\gamma}\}_{\gamma \in \dot{\Gamma} }\|_{\dot{F}^{0,q}_{p}}
\sim \|\{f_{j,\gamma}\}_{\gamma \in \dot{\Gamma} }\|_{\dot{B}^{0,q}_{p}},$$
$$\|\{f_{j,\gamma}\}_{\gamma \in \dot{\Gamma} } \|_{L^{\infty}} \sim \|\{f_{j,\gamma}\}_{\gamma \in \dot{\Gamma} }\|_{\dot{B}^{0,q}_{\infty}}.$$
Hence on the ring, the norm is determined by the absolute value of their wavelet coefficients.

(ii) The condition that $\{\Phi_{j,\gamma}(x)\}_{\gamma\in \dot{\Gamma}_{j}}$ is a group of unconditional basis in $A(\dot{\Gamma}_{j})$
is equivalent to the following condition:
\begin{equation}\label{eq:absolute}
\forall j\in \mathbb{Z},
\|\{f_{j,\gamma}\}_{\gamma \in \dot{\Gamma} } \|_{A} \sim \|\{|f_{j,\gamma}|\}_{\gamma \in \dot{\Gamma} }\|_{A}.
\end{equation}
The above condition \eqref{eq:absolute} is implicit in Peetre's book \cite{Peetre},
and it's weaker than the condition that A has an unconditional basis.
We did not attempt to prove that any pseudo-Banach space constraint on a ring automatically satisfies the condition \eqref{eq:absolute},
in order to reduce the length of the paper and
make readers pay more attention to the technique of giving K functional expressions in the general cases.

\end{remark}

For Besov hierarchical spaces, we have the following wavelet characterization:
\begin{theorem} \label{lem:cbesov}
Assume that $0< q \leq\infty$, $s\in \mathbb{R}$.
If $A$ satisfies \eqref{eq:absolute}, then
$$f(x)= \sum\limits_{(j,\gamma)\in \dot{\Lambda}} f_{j,\gamma} \Phi_{j,\gamma}\in \dot{l}^{s,q}(A) \Leftrightarrow
\sum\limits_{j\in \mathbb{Z}} 2^{jqs}
\|\{|f_{j,\gamma}| \}_{\gamma\in \dot{\Gamma}} \|_{A} ^{q} < +\infty.$$
\end{theorem}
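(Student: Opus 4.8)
The plan is to reduce the assertion to the abstract Littlewood--Paley description of $\dot l^{s,q}(A)$ in Definition \ref{de1.1}(iv) and then transport the norm block by block through the wavelet isomorphism of Theorem \ref{lem:dtod}(ii). First I would fix $f=\sum_{(j,\gamma)\in\dot\Lambda}f_{j,\gamma}\Phi_{j,\gamma}\in\cs_0'(\rn)$ and recall that, by definition, $f\in\dot l^{s,q}(A)$ precisely when $\big[\sum_{u\in\zz}2^{usq}\|f^u\|_A^q\big]^{1/q}<\infty$, where $f^u=\varphi_u*f$ is the $u$-th Littlewood--Paley block. The heart of the matter is a single-block comparison: for each fixed $u$, the Littlewood--Paley block $f^u$ and the wavelet block $\sum_{\gamma\in\dot\Gamma}f_{u,\gamma}\Phi_{u,\gamma}$ span essentially the same frequency annulus $\{|\xi|\sim 2^u\}$, so one expects $\|f^u\|_A\sim\|\{f_{u,\gamma}\}_{\gamma\in\dot\Gamma}\|_A$ with constants independent of $u$ by dilation.

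The key steps, in order, would be: (1) Record the dilation structure --- both $\varphi_u$ and $\Phi_{u,\gamma}$ are $2^u$-dilates of fixed Schwartz functions, so after rescaling everything reduces to a comparison at frequency scale $1$; here one uses that all norms in sight are dilation-covariant in a controlled way, or more precisely that the relevant change only contributes a uniform power of $2$ that can be absorbed (and, on the homogeneous side, that there is no interference between scales because $f^u$ only sees finitely many adjacent wavelet scales). (2) Use a Fourier-multiplier/almost-orthogonality argument: write $f^u=\varphi_u*f=\sum_{|u'-u|\le C}\varphi_u*(\sum_\gamma f_{u',\gamma}\Phi_{u',\gamma})$ and conversely express $\sum_\gamma f_{u,\gamma}\Phi_{u,\gamma}$ in terms of nearby $f^{u'}$; since Meyer wavelets have compactly supported Fourier transforms, only boundedly many scales $u'$ occur, and each convolution is a nice (pseudo-Banach-bounded) operation on $A$ restricted to an annulus. (3) Invoke hypothesis \eqref{eq:absolute} at the last moment: it lets us replace $\|\{f_{u,\gamma}\}_{\gamma\in\dot\Gamma}\|_A$ by $\|\{|f_{u,\gamma}|\}_{\gamma\in\dot\Gamma}\|_A$, which is exactly the quantity appearing in the claimed characterization. (4) Sum the $q$-th powers with weights $2^{jqs}$ over $j\in\zz$ (replacing $u$ by $j$), using the quasi-triangle inequality of the pseudo-Banach norm and the finite-overlap of scales to pass the block-wise equivalences through the $\ell^q$ sum; the $q=\infty$ case is handled by the obvious supremum modification.

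The main obstacle I anticipate is making the block-wise equivalence $\|f^u\|_A\sim\|\{f_{u,\gamma}\}_{\gamma\in\dot\Gamma}\|_A$ rigorous for a general pseudo-Banach space $A$ rather than a concrete $L^p$: one cannot appeal to Mikhlin-type multiplier theorems, so one must instead reduce the convolution operators $g\mapsto\varphi_u*g$ (acting on functions with frequency support in a fixed annulus) to bounded operations that $A$ is assumed to respect --- most naturally by expanding the convolution kernel in a rapidly converging series of translates and using only translation-invariance plus the quasi-triangle inequality with its doubling constant, being careful that the series converges in the pseudo-Banach (not necessarily Banach) setting. A secondary technical point is bookkeeping the exact power of $2$ that arises from the $L^2$-normalization $2^{nj/2}$ of the wavelets versus the normalization built into $\varphi_u$; this is the same shift $s\mapsto s+\tfrac n2-\tfrac np$ visible in Lemma \ref{lem:Besov-Lorentz}(i) for $A=L^p$, and in the abstract formulation it is absorbed into the definition of the $A$-norm on $\dot\Gamma_j$, so one must state clearly that $\|\{|f_{j,\gamma}|\}_{\gamma\in\dot\Gamma}\|_A$ already carries the correct normalization. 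Once these two points are settled, the rest is a routine summation.
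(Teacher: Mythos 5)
You should first be aware that the paper offers no proof of Theorem \ref{lem:cbesov} at all: it is stated immediately after the remark introducing condition \eqref{eq:absolute} and is implicitly justified by an appeal to the wavelet references and to the isomorphism of Theorem \ref{lem:dtod}. So there is no argument in the paper to compare yours against step by step; your outline is the standard route (Littlewood--Paley definition from Definition \ref{de1.1}(iv), single-scale comparison of $\|f^u\|_A$ with the wavelet block norm via the compact Fourier support of Meyer wavelets, finite overlap of scales, then \eqref{eq:absolute} to pass to absolute values, then weighted $\ell^q$ summation), and it is exactly the route one would expect the authors to have in mind. Your remark about the normalization is also correctly resolved: in the paper's convention $\|\{|f_{j,\gamma}|\}_{\gamma\in\dot\Gamma}\|_A$ denotes the $A$-norm of the function $\sum_{\gamma}|f_{j,\gamma}|\Phi_{j,\gamma}$, so the factor $2^{nj/2}$ is already inside $\Phi_{j,\gamma}$ and no index shift of the type $s\mapsto s+\tfrac n2-\tfrac np$ appears in the abstract statement (it only surfaces when one rewrites the norm as a discrete $\ell^p$ sum, as in Lemma \ref{lem:Besov-Lorentz}).

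The genuine gap is the one you flag in your final paragraph, and it is not closed by your proposal (nor by the paper): for a general pseudo-Banach space $A$ subject only to hypothesis \eqref{eq:absolute}, the single-block equivalence $\|f^u\|_A\sim\|\sum_{\gamma}f_{u,\gamma}\Phi_{u,\gamma}\|_A$ does not follow from anything stated. Your step (2) requires that convolution against $\varphi_u$, and the reconstruction of the wavelet block from finitely many adjacent Littlewood--Paley blocks, act boundedly on $A$ with constants uniform in $u$; your proposed fix (expanding the kernel in a rapidly converging series of translates and using translation-invariance plus the quasi-triangle inequality) imports translation-invariance of $A$ and a summability property for lattices of translates, neither of which is among the paper's hypotheses. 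Condition \eqref{eq:absolute} only controls sign changes of wavelet coefficients within a fixed scale; it says nothing about multiplier or convolution bounds. To make the theorem true as stated one must either add such structural assumptions on $A$ (uniform boundedness on $A$ of the operators $g\mapsto\varphi_u*g$ restricted to the annulus $|\xi|\sim2^u$, together with the dual sampling bound), or restrict to concrete scales such as $A=L^{p}$ or $L^{p,r}$ where Lemma \ref{lem:Besov-Lorentz} already does the work. Since every later section of the paper builds on Theorem \ref{lem:cbesov}, this is a substantive hypothesis that should be stated explicitly rather than left implicit; your write-up would be correct once that assumption is added and the almost-orthogonality estimate of step (2) is carried out quantitatively in the quasi-normed setting (tracking the doubling constant of the quasi-triangle inequality through the finite sum over $|u'-u|\le C$).
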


\section{Preliminaries on K functional}\label{sec:2}
Peetre \cite{Peetre} introduced K functional to considered real interpolation spaces.
In modern learning algorithms, K functional appears as a cost function with regularization term.
See \cite{LCWG}.
In this section, we present some preliminaries on K functional.

\subsection{K functional}\label{sec:2.1}

Peetre \cite{Peetre} introduced K functional $K(t,f)$ to study real interpolation spaces.
We recall that if $(A_{0},\;A_{1})$ is a pair of quasi-normed spaces which are continuously embedded in a Hausdorff space $X$.
Suppose that $A_0$ and $A_1$ are compatible normed vector spaces.
For any threshold $t>0$,
$\forall f\in A_0 + A_1$ and $f_0\in A_0$,
define  a set of dynamically varying norms
$$ K(t,f, A_0, A_1,f_0) =  \|f_0\|_{A_0} + t \|f-f_0\|_{A_1} $$
for all $f\in A_{0}+A_{1}$ with $f_{0}\in A_{0}$ and $f_{1}\in A_{1}$.
$K$-functional is the lower bound of this dynamically varying norm, i.e.
$$K:= K(t,f, A_0, A_1) = \inf_{f=f_0+ f_1}
\{ \|f_0\|_{A_0} + t \|f_1\|_{A_1}\}.$$
$K(t,f,  A_0, A_1)$ is a nonnegative, increasing concave function of $t$.
K functionals have the following commutative properties:
\begin{lemma} \label{lem:CK}
For $t>0$, we have
$K(t,f, A_0, A_1)= t K(t^{-1}, f, A_1, A_0).$
\end{lemma}

Based on K functional, we can define the corresponding real interpolation space. For $0<\theta<1$,
\begin{equation}\label{eq:theta.infty}
\|f\|_{(A_0,A_1)_{\theta,\infty,K}}= \sup\limits_{t>0} t^{-\theta} K(t, f, A_0, A_1).$$
$$(A_0, A_1)_{\theta,\infty, K} = \{f: f\in A_0 +A_1, \|f\|_{(A_0,A_1)_{\theta,\infty, K}}<\infty\}.
\end{equation}
Further, for $0<\eta<\infty$, define
$$\|f\|_{(A_0,A_1)_{\theta,\eta,K}}= \{\int^{\infty}_{0} [ t^{-\theta} K(t, f, A_0, A_1)] ^{\eta}
\frac{dt}{t} \}^{\frac{1}{\eta}}.$$
\begin{equation}\label{eq:theta.eta}
(A_0, A_1)_{\theta,\eta, K} = \{f: f\in A_0 +A_1, \|f\|_{(A_0,A_1)_{\theta,\eta, K}}<\infty\}.
\end{equation}

The interpolation spaces have the following commutative properties:
\begin{lemma}\label{lem:commutative}
Given $0<\theta<1, 0<\eta\leq \infty.$
\begin{equation*}\label{eq:2221}
\|f\|_{(A_0,A_1)_{\theta,\eta, K}}= \|f\|_{(A_1,A_0)_{1-\theta,\eta, K}}.
\end{equation*}
\end{lemma}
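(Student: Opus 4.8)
The statement to prove is Lemma~\ref{lem:commutative}: for $0<\theta<1$ and $0<\eta\le\infty$, $\|f\|_{(A_0,A_1)_{\theta,\eta,K}}=\|f\|_{(A_1,A_0)_{1-\theta,\eta,K}}$.

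\medskip

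The plan is to reduce everything to the commutation identity for the $K$-functional, Lemma~\ref{lem:CK}, namely $K(t,f,A_0,A_1)=t\,K(t^{-1},f,A_1,A_0)$, and then perform the substitution $t\mapsto t^{-1}$ in the defining integral. First I would write out the right-hand side from \eqref{eq:theta.eta} applied to the pair $(A_1,A_0)$ with parameter $1-\theta$:
$$\|f\|_{(A_1,A_0)_{1-\theta,\eta,K}}=\Big\{\int_0^\infty\big[t^{-(1-\theta)}K(t,f,A_1,A_0)\big]^\eta\,\frac{dt}{t}\Big\}^{1/\eta}.$$
Then I would substitute $t=1/u$, so $dt/t=-du/u$, and as $t$ runs over $(0,\infty)$ so does $u$ with the orientation reversed, which cancels the sign; this turns the integral into $\int_0^\infty\big[u^{1-\theta}K(u^{-1},f,A_1,A_0)\big]^\eta\,\frac{du}{u}$. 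Now apply Lemma~\ref{lem:CK} in the form $K(u^{-1},f,A_1,A_0)=u^{-1}K(u,f,A_0,A_1)$ (this is Lemma~\ref{lem:CK} read with $t=u^{-1}$, or equivalently with the roles of $A_0,A_1$ swapped). The factor $u^{1-\theta}\cdot u^{-1}=u^{-\theta}$, so the integrand becomes $\big[u^{-\theta}K(u,f,A_0,A_1)\big]^\eta\,\frac{du}{u}$, which is exactly the integrand defining $\|f\|_{(A_0,A_1)_{\theta,\eta,K}}$. Taking $\eta$-th roots gives the identity for $0<\eta<\infty$.

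\medskip

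For the endpoint $\eta=\infty$ I would argue separately but identically, using \eqref{eq:theta.infty}: $\|f\|_{(A_1,A_0)_{1-\theta,\infty,K}}=\sup_{t>0}t^{-(1-\theta)}K(t,f,A_1,A_0)$, substitute $t=u^{-1}$ (a bijection of $(0,\infty)$ onto itself, so the supremum is unchanged), apply Lemma~\ref{lem:CK}, and collect powers of $u$ to get $\sup_{u>0}u^{-\theta}K(u,f,A_0,A_1)=\|f\|_{(A_0,A_1)_{\theta,\infty,K}}$. I should also note that the underlying sets coincide: $f\in A_0+A_1$ iff $f\in A_1+A_0$, so the spaces $(A_0,A_1)_{\theta,\eta,K}$ and $(A_1,A_0)_{1-\theta,\eta,K}$ consist of the same elements and the norm identity upgrades to a genuine equality of (quasi-)normed spaces.

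\medskip

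There is essentially no serious obstacle here — this is a routine change of variables — but the one point deserving care is the legitimacy of the substitution $t\mapsto t^{-1}$ in the improper integral and the handling of the measure $dt/t$, which is precisely the Haar measure on the multiplicative group $(0,\infty)$ and is therefore invariant under $t\mapsto t^{-1}$; making this explicit is what makes the computation clean. A secondary, purely bookkeeping, point is to be consistent about which statement of Lemma~\ref{lem:CK} one invokes, since it can be read either as $K(t,f,A_0,A_1)=tK(t^{-1},f,A_1,A_0)$ or, after relabeling, as $K(t,f,A_1,A_0)=tK(t^{-1},f,A_0,A_1)$; both are the same lemma and I would simply pick the form that matches the exponent of $u$ produced by the substitution.
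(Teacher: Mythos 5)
Your proof is correct and is exactly the standard change-of-variables argument ($t\mapsto t^{-1}$ together with Lemma~\ref{lem:CK} and the invariance of $\frac{dt}{t}$ under inversion) that the paper implicitly relies on, since it states this lemma without proof. No gaps; the separate treatment of $\eta=\infty$ and the remark that the underlying sets $A_0+A_1=A_1+A_0$ coincide are the right finishing touches.
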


Holmstedt-Peetre \cite{HP} introduced $K_{\xi}$ functional to improve the situation.
Let $0<\xi<\infty$, define the functional $K_{\xi}$ by
$$K_{\xi}=K_{\xi}(t,f,A_{0},A_{1})=\inf\limits_{f=f_{0}+f_{1}}(\|f_{0}\|_{A_{0}}^{\xi}+t^{\xi}\|f_{1}\|_{A_{1}}^{\xi})^{\frac{1}{\xi}},$$
and
$$
\|f\|_{(A_{0},A_{1})_{\theta,q,K_{\xi}}}=\left[\int_{0}^{\infty}[t^{-\theta}K_{\xi}(t,f,A_{0},A_{1})]^{q}\frac{dt}{t}\right]^{\frac{1}{q}}.
$$
Holmstedt-Peetre \cite{HP} tell us that the real interpolation spaces formed by
the two kinds of K functionals are equivalent.
\begin{lemma}\label{le3.1}
{\bf Holmstedt-Peetre Theorem.}
Let $(A_{0},A_{1})$ be a couple of quasi-normed spaces.
For any $0<\xi<\infty$, we have
$$\|f\|_{(A_{0},A_{1})_{\theta,q,K}}\sim\|f\|_{(A_{0},A_{1})_{\theta,q,K_{\xi}}}.$$
\end{lemma}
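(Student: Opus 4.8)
\textbf{Proof proposal for Lemma \ref{le3.1} (Holmstedt--Peetre Theorem).}
The plan is to prove the pointwise two-sided equivalence $K(t,f,A_0,A_1)\sim K_\xi(t,f,A_0,A_1)$ with constants depending only on $\xi$, after which the equivalence of the interpolation norms $\|f\|_{(A_0,A_1)_{\theta,q,K}}\sim\|f\|_{(A_0,A_1)_{\theta,q,K_\xi}}$ follows immediately by plugging into the definitions and using that constants pull out of the $L^q(dt/t)$ quasi-norm. So the whole lemma reduces to a statement about the two functionals at a fixed $t$.

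\emph{One direction.} For any admissible decomposition $f=f_0+f_1$ we have
$$
\bigl(\|f_0\|_{A_0}^\xi+t^\xi\|f_1\|_{A_1}^\xi\bigr)^{1/\xi}\le \|f_0\|_{A_0}+t\|f_1\|_{A_1}
$$
when $\xi\ge 1$ (since the $\ell^\xi$-norm of a two-term vector dominates its $\ell^1$-norm), and conversely
$$
\|f_0\|_{A_0}+t\|f_1\|_{A_1}\le 2^{1-1/\xi}\bigl(\|f_0\|_{A_0}^\xi+t^\xi\|f_1\|_{A_1}^\xi\bigr)^{1/\xi}
$$
when $\xi\ge 1$, and the inequalities reverse (with the $2^{1/\xi-1}$-type constant on the other side) when $0<\xi\le 1$; in all cases the ratio of the two expressions is pinched between $1$ and $2$ uniformly in $f_0$. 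Taking the infimum over decompositions $f=f_0+f_1$ on both sides — the infimum being over the same set — yields $K_\xi(t,f,A_0,A_1)\sim K(t,f,A_0,A_1)$ with constants depending only on $\xi$. First I would record the elementary fact that for nonnegative reals $a,b$ one has $\max(a,b)\le (a^\xi+b^\xi)^{1/\xi}\le 2^{1/\xi}\max(a,b)$ and $a+b$ is comparable to $\max(a,b)$ up to a factor $2$; this makes both displayed inequalities transparent and shows the comparison constant is at most $2$ in either direction.

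\emph{From pointwise comparison to the norms.} Given the pointwise equivalence $c_\xi\,K(t,f,A_0,A_1)\le K_\xi(t,f,A_0,A_1)\le C_\xi\,K(t,f,A_0,A_1)$ with $0<c_\xi\le C_\xi<\infty$ independent of $t$ and $f$, raise to the power $q$, multiply by $t^{-\theta q}$, integrate against $dt/t$, and take $q$-th roots; this gives
$$
c_\xi\,\|f\|_{(A_0,A_1)_{\theta,q,K}}\le \|f\|_{(A_0,A_1)_{\theta,q,K_\xi}}\le C_\xi\,\|f\|_{(A_0,A_1)_{\theta,q,K}},
$$
with the obvious supremum modification when $q=\infty$, which is the claimed equivalence. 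The only point requiring a line of care is that $(A_0,A_1)$ is merely \emph{quasi}-normed, so $K$ and $K_\xi$ are quasi-concave rather than concave; but this plays no role here, since the argument uses only the pointwise two-sided bound and monotone/homogeneity behaviour of the $L^q(dt/t)$ functional, never subadditivity of $K$. There is essentially no serious obstacle in this proof: the ``hard part,'' such as it is, is just bookkeeping the direction of the elementary $\ell^\xi$-versus-$\ell^1$ inequalities as $\xi$ crosses $1$ and confirming the comparison constant stays bounded (by $2$) uniformly, so that the infima over decompositions transfer cleanly.
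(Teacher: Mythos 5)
Your proof is correct. The paper offers no proof of this lemma at all---it simply states it and cites Holmstedt--Peetre---so there is no internal argument to compare against; your route is the standard one: establish the pointwise two-sided bound $K_{\xi}(t,f,A_0,A_1)\sim K(t,f,A_0,A_1)$ with constants depending only on $\xi$ by comparing $(a^{\xi}+b^{\xi})^{1/\xi}$ with $a+b$ for each fixed decomposition $f=f_0+f_1$ and then passing to the infimum over the common set of decompositions, after which the equivalence of the $(\theta,q)$ quasi-norms follows by monotonicity and homogeneity of the $L^{q}(dt/t)$ functional. The only inaccuracy is numerical and harmless: the comparison constant between $(a^{\xi}+b^{\xi})^{1/\xi}$ and $a+b$ is $2^{|1/\xi-1|}$, which exceeds $2$ once $\xi<1/2$, so your parenthetical claim that the constant is ``at most $2$ in either direction'' is false for small $\xi$; since the lemma only requires constants depending on $\xi$ (and $\xi$ is fixed), this does not affect the validity of the argument. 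You are also right that quasi-normedness (lack of concavity of $K$) plays no role, since only the pointwise comparison is used.
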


Many existing interpolation results use the following Lions-Peetre's iterative Theorem
to get complex real interpolation spaces from simple structure real interpolation spaces.
\begin{lemma}\label{lem:LP}{\bf Lions-Peetre's iterative Theorem, 1964.}
For $0<\theta_0, \theta_1, \eta<1, 0<q_0, q_1\leq \infty, 1\leq q\leq \infty, \theta= (1-\eta) \theta_0 + \eta \theta_1$, we have
$$\{ (A_0, A_1) _{\theta_0, q_0}, (A_0, A_1)_{\theta_1, q_1}\}_{\eta, q} = (A_0, A_1)_{\theta, q}.$$
\end{lemma}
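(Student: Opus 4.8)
The plan is to derive this reiteration identity from Holmstedt's formula for the $K$-functional of the couple of intermediate spaces, together with two weighted Hardy inequalities. Write $X_i:=(A_0,A_1)_{\theta_i,q_i}$ for $i=0,1$. By the commutativity Lemma~\ref{lem:commutative} there is no loss in assuming $\theta_0<\theta_1$; set $\rho:=\theta_1-\theta_0>0$ and abbreviate $k(s):=K(s,f,A_0,A_1)$. (When $\theta_0=\theta_1$ the statement is degenerate: both $X_0$ and $X_1$ are then of class $\theta_0$ over $(A_0,A_1)$, i.e.\ $A_0\cap A_1\hookrightarrow X_i\hookrightarrow A_0+A_1$ with the sharp estimate $K(s,g,A_0,A_1)\ls s^{\theta_0}\|g\|_{X_i}$, and a direct comparison of the two sides yields the claim; so from here on $\theta_0\neq\theta_1$.) The main analytic input is Holmstedt's formula
$$K(t^{\rho},f,X_0,X_1)\ \sim\ \Big(\int_0^{t}\big[s^{-\theta_0}k(s)\big]^{q_0}\frac{ds}{s}\Big)^{1/q_0}\ +\ t^{\rho}\Big(\int_{t}^{\infty}\big[s^{-\theta_1}k(s)\big]^{q_1}\frac{ds}{s}\Big)^{1/q_1},\qquad t>0,$$
with the usual supremum modification when $q_0$ or $q_1$ equals $\infty$. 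I would invoke this from \cite{HP}; if one prefers a self-contained argument, it is reproved by fixing $t$, choosing near-optimal decompositions $f=u_m+v_m$ of $f$ in $(A_0,A_1)$ along the dyadic scales $s=2^m$ (so $\|u_m\|_{A_0}+2^m\|v_m\|_{A_1}\le 2k(2^m)$), telescoping the $u_m$ over the scales below $t^{\rho}$ and the $v_m$ over the scales above $t^{\rho}$ into a single splitting $f=f_0+f_1$, and bounding $\|f_0\|_{X_0}$ and $t^{\rho}\|f_1\|_{X_1}$ by the two right-hand terms using the monotonicity and concavity of $s\mapsto k(s)$ and the comparison of sums with integrals; the matching lower bound follows from the embeddings just mentioned together with a summation-by-parts estimate. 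Lemma~\ref{le3.1} may be used freely to replace $K$ by the more convenient $K_{\xi}$ along the way.

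Granting Holmstedt's formula, the deduction is a computation. Substituting $u=t^{\rho}$ in the definition of the $(\eta,q)$-norm and inserting the formula,
$$\|f\|_{(X_0,X_1)_{\eta,q}}^{q}\ =\ \rho\int_0^{\infty}\big[t^{-\eta\rho}K(t^{\rho},f,X_0,X_1)\big]^{q}\frac{dt}{t}\ \sim\ \int_0^{\infty}t^{-\eta\rho q}A(t)^{q}\frac{dt}{t}\ +\ \int_0^{\infty}t^{(1-\eta)\rho q}B(t)^{q}\frac{dt}{t},$$
where $A(t):=\big(\int_0^{t}[s^{-\theta_0}k(s)]^{q_0}\,ds/s\big)^{1/q_0}$ and $B(t):=\big(\int_{t}^{\infty}[s^{-\theta_1}k(s)]^{q_1}\,ds/s\big)^{1/q_1}$, and we used $(a+b)^q\sim a^q+b^q$ on the stated range of $q$. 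To the first integral apply the weighted Hardy inequality with the positive exponent $\eta\rho q_0$; combined with the elementary reverse bound $t^{-\theta_0}k(t)\ls A(t)$ (obtained from the concavity and monotonicity of $k$ on $(t/2,t)$), this gives $\int_0^{\infty}t^{-\eta\rho q}A(t)^{q}\,dt/t\sim\int_0^{\infty}[t^{-\eta\rho-\theta_0}k(t)]^{q}\,dt/t$. To the second integral apply the dual Hardy inequality with the positive exponent $(1-\eta)\rho q_1$; with the reverse bound $t^{-\theta_1}k(t)\ls B(t)$ this gives $\int_0^{\infty}t^{(1-\eta)\rho q}B(t)^{q}\,dt/t\sim\int_0^{\infty}[t^{(1-\eta)\rho-\theta_1}k(t)]^{q}\,dt/t$. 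Finally the two exponents collapse onto the same value, since $\eta\rho+\theta_0=(1-\eta)\theta_0+\eta\theta_1=\theta$ and $\theta_1-(1-\eta)\rho=(1-\eta)\theta_0+\eta\theta_1=\theta$; hence both integrals are $\sim\int_0^{\infty}[t^{-\theta}k(t)]^{q}\,dt/t=\|f\|_{(A_0,A_1)_{\theta,q}}^{q}$, which is the asserted norm equivalence, and since every step is two-sided and the underlying vector space is $A_0+A_1$ throughout, the two interpolation spaces coincide. For $q=\infty$ the same substitution together with the elementary supremum forms of Hardy's inequality gives the identity.

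The main obstacle is Holmstedt's formula itself — in particular its upper bound, where one must manufacture a single decomposition of $f$ that is simultaneously efficient for $X_0$ at small scales and for $X_1$ at large scales, and must track interpolation constants that blow up as $|\theta_1-\theta_0|\to0$ (this blow-up is exactly why the case $\theta_0=\theta_1$ needs the separate treatment noted above). Once the formula is available, the remainder — the change of variables, the two Hardy inequalities, and the arithmetic of the exponents — is routine.
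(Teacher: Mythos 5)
The paper offers no proof of this lemma; it is quoted as the classical Lions--Peetre/Holmstedt reiteration theorem, so there is no internal argument to compare against. Your derivation for $\theta_0\neq\theta_1$ --- Holmstedt's formula (Lemma \ref{th:Holmstedt}) for $K(t,f,X_0,X_1)$, the substitution $u=t^{\rho}$, the splitting $(a+b)^q\sim a^q+b^q$, the two weighted Hardy inequalities combined with the reverse bounds $t^{-\theta_i}k(t)\lesssim A(t),B(t)$ coming from the monotonicity of $k$, and the exponent arithmetic $\eta\rho+\theta_0=\theta_1-(1-\eta)\rho=\theta$ --- is the standard textbook proof and is essentially sound. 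One caveat on the Hardy step: since the inner exponent is $q_0$ (resp.\ $q_1$) while the outer one is $q$, the naive Hardy inequality does not apply verbatim when $q<q_0$; one needs either the generalized Hardy inequality for data of the form $s^{-\theta_0}k(s)$ with $k$ increasing and $k(s)/s$ decreasing, or a dyadic discretization exploiting those monotonicity properties. You gesture at this, and it is routine, but that is where the real work in this route lies.

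The genuine gap is your parenthetical treatment of $\theta_0=\theta_1$. There the identity is simply false for general $q$: with $A_0=L^{1}$, $A_1=L^{\infty}$ and $\theta_0=\theta_1=1-\frac{1}{p}$ one has $X_i=L^{p,q_i}$, and by Lemma \ref{lem:LL}(ii) the space $(L^{p,q_0},L^{p,q_1})_{\eta,r}$ equals $L^{p,r}$ only when $\frac{1}{r}=\frac{1-\eta}{q_0}+\frac{\eta}{q_1}$; for other $r$ the resulting space is in general different --- this is exactly the phenomenon behind Cwikel's theorem quoted in this paper. Your ``direct comparison'' from both $X_i$ being of class $\theta_0$ yields at best the one-sided embedding $(X_0,X_1)_{\eta,q}\hookrightarrow(A_0,A_1)_{\theta_0,\infty}$, not the claimed identity. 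The correct reiteration theorem carries the hypothesis $\theta_0\neq\theta_1$ (as Lemma \ref{th:Holmstedt} implicitly does through $\eta=\theta_1-\theta_0>0$), and the lemma as printed should be read with that restriction; you should delete the degenerate case from your argument rather than assert it.
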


For the iteration real interpolation spaces,
Holmstedt \cite{Holmstedt} obtained the following composite rules for K-functionals.
\begin{lemma} \label{th:Holmstedt}
{\bf Holmstedt Theorem, 1970.}
Let $A_0$ and $A_1$ be a couple of quasi-normed spaces and for $i=1,2$, put
$E_{i}= (A_0, A_1)_{\theta_i, q_i} $.
If $0<\theta_0<\theta_1<1$, $\eta= \theta_1-\theta_0$ and $0<q_0,q_1\leq \infty$, then
\begin{equation*}
\begin{aligned}
K(t,f, E_0, E_1)
&=\{ \int^{t^{\frac{1}{\eta}}}_{0} (s^{-\theta_0} K(s, f, A_0, A_1))^{q_0} \frac{ds}{s}\}^{\frac{1}{q_0}}\\
&+ t \{ \int^{\infty}_{t^{\frac{1}{\eta}}} (s^{-\theta_1} K(s, f, A_0, A_1))^{q_1} \frac{ds}{s}\}^{\frac{1}{q_1}}.
\end{aligned}
\end{equation*}
\end{lemma}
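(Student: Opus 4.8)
The plan is to carry out Holmstedt's two-sided estimate directly. Throughout write $K(s):=K(s,f,A_0,A_1)$ and $\rho:=t^{1/\eta}$ (recall $\eta=\theta_1-\theta_0>0$), and set
$$P(t):=\Big(\int_0^{\rho}(s^{-\theta_0}K(s))^{q_0}\,\tfrac{ds}{s}\Big)^{1/q_0},\qquad
Q(t):=\Big(\int_{\rho}^{\infty}(s^{-\theta_1}K(s))^{q_1}\,\tfrac{ds}{s}\Big)^{1/q_1},$$
with the usual supremum convention when $q_0$ or $q_1$ equals $\infty$; the goal is $K(t,f,E_0,E_1)\sim P(t)+t\,Q(t)$ as an equivalence of quasi-norms. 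Two structural facts will be used repeatedly: (a) $s\mapsto K(s)$ is nondecreasing while $s\mapsto K(s)/s$ is nonincreasing, so that $\tfrac{s}{u}K(u)\le K(s)\le K(u)$ whenever $0<s\le u$; and (b) the elementary embedding $(A_0,A_1)_{\theta,q}\hookrightarrow(A_0,A_1)_{\theta,\infty}$, i.e. $\sup_{s>0}s^{-\theta}K(s,g)\lesssim\|g\|_{(A_0,A_1)_{\theta,q}}$, which follows from (a) by comparing $K(s,g)$ on a dyadic interval around a near-maximizing point. From (a), integrating over $[\rho/2,\rho]$, one also gets the lower comparisons $P(t)\gtrsim\rho^{-\theta_0}K(\rho)$ and $Q(t)\gtrsim\rho^{-\theta_1}K(\rho)$; these are what let the correction terms below be absorbed.

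For the upper bound I exhibit a single decomposition. Fix a near-optimal splitting at the one scale $\rho$: $f=a+b$ with $a\in A_0$, $b\in A_1$, $\|a\|_{A_0}+\rho\|b\|_{A_1}\le 2K(\rho)$; since $0<\theta_0,\theta_1<1$ forces $A_0,A_1\hookrightarrow E_0$ and $A_0,A_1\hookrightarrow E_1$, I take $g:=a\in E_0$, $h:=b\in E_1$. Using subadditivity of $K(s,\cdot)$, the bound $K(s,b)\le s\|b\|_{A_1}\le\tfrac{2s}{\rho}K(\rho)$, and the inequality $\tfrac{s}{\rho}K(\rho)\le K(s)$ for $s\le\rho$ from (a), one gets $K(s,g)\le 3K(s)$ on $(0,\rho)$ and $K(s,g)\le\|a\|_{A_0}\le 2K(\rho)$ on $(\rho,\infty)$; integrating, the first part contributes $\lesssim P(t)^{q_0}$ and the second contributes a multiple of $(\rho^{-\theta_0}K(\rho))^{q_0}\lesssim P(t)^{q_0}$, whence $\|g\|_{E_0}\lesssim P(t)$. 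Symmetrically $K(s,h)\le\tfrac{2s}{\rho}K(\rho)$ on $(0,\rho)$ and $K(s,h)\le K(s)+\|a\|_{A_0}\le 3K(s)$ on $(\rho,\infty)$ give $\|h\|_{E_1}\lesssim Q(t)$. Hence $K(t,f,E_0,E_1)\le\|g\|_{E_0}+t\|h\|_{E_1}\lesssim P(t)+tQ(t)$.

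For the lower bound I take an arbitrary decomposition $f=g+h$ with $g\in E_0$, $h\in E_1$ and bound both $P(t)$ and $tQ(t)$ by $\|g\|_{E_0}+t\|h\|_{E_1}$; passing to the infimum then finishes. Subadditivity $K(s)\le K(s,g)+K(s,h)$ together with the quasi-triangle inequality in $L^{q_0}(ds/s)$ gives $P(t)\lesssim\|g\|_{E_0}+\big(\int_0^\rho(s^{-\theta_0}K(s,h))^{q_0}\tfrac{ds}{s}\big)^{1/q_0}$, and on $(0,\rho)$ one has $s^{-\theta_0}K(s,h)=s^{\eta}\,s^{-\theta_1}K(s,h)$; it then remains to pass from the $q_0$-integral of $s^{-\theta_1}K(s,h)$ over $(0,\rho)$ to its $q_1$-integral over $(0,\infty)$, which is $\|h\|_{E_1}$. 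When $q_0\le q_1$ this is Hölder's inequality (the extra factor is $\big(\int_0^\rho s^{\eta q_0 q_1/(q_1-q_0)}\tfrac{ds}{s}\big)^{(q_1-q_0)/q_1}\sim\rho^{\eta q_0}$, finite because $\eta>0$); when $q_0>q_1$ one factors out $\big(\sup_{(0,\rho)}s^{-\theta_0}K(s,h)\big)^{1-q_1/q_0}\le(\rho^\eta\|h\|_{(A_0,A_1)_{\theta_1,\infty}})^{1-q_1/q_0}\lesssim(\rho^\eta\|h\|_{E_1})^{1-q_1/q_0}$ using (b), while the residual $q_1$-integral is $\le\rho^{\eta q_1}\|h\|_{E_1}^{q_1}$. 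Either way the bracketed term is $\lesssim t\|h\|_{E_1}$, so $P(t)\lesssim\|g\|_{E_0}+t\|h\|_{E_1}$, and $tQ(t)\lesssim\|g\|_{E_0}+t\|h\|_{E_1}$ follows by the identical argument with $\theta_0\leftrightarrow\theta_1$, $q_0\leftrightarrow q_1$, $(0,\rho)\leftrightarrow(\rho,\infty)$ and $g\leftrightarrow h$ interchanged. The step I expect to be the crux is precisely this conversion between the exponents $q_0$ and $q_1$: it is where the precise choice of splitting scale $\rho=t^{1/\eta}$ is essential, the powers of $\rho$ balancing so that both the $q_0$- and the $q_1$-norms reduce to $\|h\|_{E_1}$ (resp. $\|g\|_{E_0}$) — the same scale-matching whose failure for $r\ne q$ underlies Cwikel's counterexample discussed above.
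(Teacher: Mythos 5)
The paper offers no proof of this lemma: it is quoted as a classical result with a citation to Holmstedt (1970), so there is no in-paper argument to compare against. Your proof is the standard Holmstedt argument (near-optimal splitting of $f$ at the single scale $\rho=t^{1/\eta}$ for the upper bound; for the lower bound, conversion between the $q_0$- and $q_1$-integrals by H\"older when $q_0\le q_1$ and by factoring out a supremum controlled via $(A_0,A_1)_{\theta_1,q_1}\hookrightarrow(A_0,A_1)_{\theta_1,\infty}$ when $q_0>q_1$), and the estimates all check out, including the exponent bookkeeping that makes both cases collapse to $\rho^{\eta}=t$. Two small blemishes, neither of which affects the argument. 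First, the parenthetical claim that $0<\theta_0,\theta_1<1$ forces $A_0,A_1\hookrightarrow E_0$ and $A_0,A_1\hookrightarrow E_1$ is false in general (for instance $L^1\not\subset L^{p,q}$ for $p>1$, i.e.\ $A_0\not\hookrightarrow(A_0,A_1)_{\theta,q}$); but you never use it --- the memberships $a\in E_0$ and $b\in E_1$ are exactly what your subsequent estimates $\|a\|_{E_0}\lesssim P(t)$ and $\|b\|_{E_1}\lesssim Q(t)$ establish, and these are valid whenever $P(t),Q(t)<\infty$, which is the only case in which the upper bound requires proof. Second, the lower comparison $Q(t)\gtrsim\rho^{-\theta_1}K(\rho)$ should be obtained by integrating over $[\rho,2\rho]$ (inside the domain of $Q$) rather than $[\rho/2,\rho]$; the monotonicity facts give the same conclusion there.
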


\subsection{Lorentz spaces}\label{sec:2.1x}

In the 1950s, Lorentz \cite{Lorentz}
considered distribution functions and
non-increasing rearrangement functions
and introduce Lorentz spaces to extend Lebesgue spaces.
More than a decade later,
Hunt \cite{Hunt1, Hunt} extend Marcinkiewiez theorem to Lorentz spaces and
revealed the relation between the real interpolation space of Lebesgue space and Lorentz space.
And this eventually forms the Lorentz space theory of real interpolation spaces.
The study of real interpolation spaces greatly expands the Lebesgue space.
The real interpolation spaces have particular nonlinear structures based on the minimum energy,
so it can improve the continuity of the operator,
so many people pay attention to it.

Lorentz \cite{Lorentz} has considered the distribution function and the non-increasing rearrangement function of Lebesgue functions
and introduced Lorentz spaces.
For set $E$, let $|E|$ denote the measure of $E$.
Consider the following right continuons non-increasing functions:\\
$$\sigma_{f}(\lambda)= |\{x: |f(x)|>\lambda\}| {\rm\, and \,} f^{*}(\tau)= \inf \{\lambda: \sigma_{f}(\lambda)\leq \tau\}.$$
Lorentz space $L^{p,q}$ is defined by non-increasing function $f^{*}$:
\begin{definition}\label{def:2.7}
Given $0<p,q<\infty$.\\
(i) $f\in L^{p,q}\Leftrightarrow \{ \frac{q}{p} \int^{\infty}_{0}  (t^{\frac{1}{p}} f^{*}(t))^{q} \frac{dt}{t} \}^{\frac{1}{q}}< +\infty.$\\
(ii) $f\in L^{p,\infty} \Leftrightarrow \sup\limits_{t>0} t^{\frac{1}{p}} f^{*}(t)< +\infty.$
\end{definition}
Since Lorentz introduced Lorentz space in \cite{Lorentz},
there has been sporadic work to extend this space,
but no systematic work has been seen.
In this paper, Lorentz space is extended systematically by a special grid topology,
and a unified description of Besov type space and real interpolation space is obtained.
Hunt \cite{Hunt} revealed the relation between Lorentz spaces and the real interpolation spaces of Lebesgue spaces.
By using the rearrangement functions, the K functional of Lebesgue function has following property.
See \cite{BL, Lorentz, Peetre}.
\begin{lemma}\label{lem:2.5}
{\bf Hunt Theorem, 1966.}\\
(i) If $0<p<\infty$, then  \,\,  $K(t,f, L^{p}, L^{\infty}) = [ \int ^{t^{p}}_{0} | f^{*}(\tau)| ^{p} d\tau ]^{\frac{1}{p}}.$\\
(ii) If $0<p_{0}<p_{1}<\infty$ and $\frac{1}{\alpha}= \frac{1}{p_{0}}- \frac{1}{p_{1}}$, then\\
$$K(t,f, L^{p_{0}}, L^{p_{1}}) = [ \int ^{t^{\alpha}}_{0} | f^{*}(\tau)| ^{p_{0}} d\tau ]^{\frac{1}{p_{0}}}
+ t [ \int ^{\infty}_{t^{\alpha }} | f^{*}(\tau)| ^{p_{1}} d\tau ]^{\frac{1}{p_{1}}}.$$
\end{lemma}
According to Lemma \ref{lem:2.5},
the real interpolation spaces of Lebesgue spaces are Lorentz spaces in Definition \ref{def:2.7}.
See \cite{BL, CDL, Triebel, TriebelB}.
\begin{lemma}\label{lem:LL}
(i) If $0<\theta<1, 0<p_{0}\neq p_{1}, q,q_0,q_1 \leq \infty$ and $\frac{1}{p}= \frac{1-\theta}{p_0} + \frac{\theta}{p_1},$ then
$$( L^{p_{0}, q_0}, L^{p_{1}, q_1})_{\theta,q}  = L^{p,q}.$$
(ii) If $0<\theta<1, 0<p, q,q_0,q_1 \leq \infty$ and $\frac{1}{q}= \frac{1-\theta}{q_0} + \frac{\theta}{q_1},$ then
$$( L^{p, q_0}, L^{p, q_1})_{\theta,q}  = L^{p,q}.$$
\end{lemma}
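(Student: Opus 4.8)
The plan is to prove Lemma \ref{lem:LL} directly from the Hunt Theorem (Lemma \ref{lem:2.5}) together with the Holmstedt--Peetre equivalence (Lemma \ref{le3.1}) and the definition of the real interpolation spaces in \eqref{eq:theta.eta}. The point is that once we know the $K$-functional $K(t,f,L^{p_0},L^{p_1})$ explicitly, computing the norm $\|f\|_{(L^{p_0},L^{p_1})_{\theta,q}}=\{\int_0^\infty [t^{-\theta}K(t,f,L^{p_0},L^{p_1})]^q\,\frac{dt}{t}\}^{1/q}$ reduces to a one-variable estimate involving $f^\ast$. The extra Lorentz parameters $q_0,q_1$ on the endpoints do not change the interpolation space (only the index $q$ on the outside matters), so the essential content is the two statements with $L^{p_0},L^{p_1}$ and $L^p,L^p$ replaced by honest Lebesgue spaces; the general case follows because $(L^{p_0,q_0},L^{p_1,q_1})_{\theta,q}$ has the same $K$-functional up to equivalence — one uses that $L^{p_i,q_i}$ lies between two Lebesgue spaces with exponents arbitrarily close to $p_i$, together with the reiteration/stability of the real method.

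For part (i), I would first treat the case $0<\theta<1$ with $p_0\neq p_1$ and take $q=q_0=q_1$ irrelevant, using Lemma \ref{lem:2.5}(ii): set $\frac1\alpha=\frac1{p_0}-\frac1{p_1}$, substitute $\tau=t^\alpha$ in $K(t,f,L^{p_0},L^{p_1})$, and then plug into $\int_0^\infty[t^{-\theta}K(t,f,L^{p_0},L^{p_1})]^q\frac{dt}{t}$. A change of variables $u=t^\alpha$ turns the outer integral into an integral in $u$ with weight $u^{-\theta\alpha q}$ of the quantity $[\int_0^u |f^\ast(\tau)|^{p_0}d\tau]^{1/p_0}+u^{1/\alpha}[\int_u^\infty|f^\ast(\tau)|^{p_1}d\tau]^{1/p_1}$; a Hardy-type inequality (the monotonicity of $f^\ast$ lets one compare $\int_0^u|f^\ast|^{p_0}$ with $u|f^\ast(u)|^{p_0}$ and similarly for the tail) collapses this to $\int_0^\infty (u^{1/p}f^\ast(u))^q\frac{du}{u}$ after checking that the exponents match, i.e. that $\frac1p-\theta\cdot\frac1\alpha\cdot\text{(bookkeeping)}$ produces exactly the Lorentz weight $u^{q/p}$; this is where one verifies $\frac1p=\frac{1-\theta}{p_0}+\frac{\theta}{p_1}$ enters. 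For part (ii) with $p_0=p_1=p$, the $K$-functional is no longer given by Hunt's formula for distinct exponents, so instead I would use Lemma \ref{le3.1} (Holmstedt--Peetre) to replace $K$ by $K_\xi$ with a convenient $\xi$, or more simply invoke that $(L^{p,q_0},L^{p,q_1})_{\theta,q}$ is computed by the power/rearrangement description of $L^{p,q_i}$ as weighted $L^{q_i}$ spaces on $(0,\infty)$ with weight $\tau^{q_i/p}$, whereupon the statement becomes the classical fact $(\ell^{q_0}_w,\ell^{q_1}_w)_{\theta,q}=\ell^q_w$ (weighted Lebesgue spaces interpolate to a weighted Lebesgue space with $\frac1q=\frac{1-\theta}{q_0}+\frac{\theta}{q_1}$), applied pointwise in the weight.

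The main obstacle I anticipate is not any single computation but the bookkeeping needed to pass from the \emph{Lebesgue-space} endpoints to the general \emph{Lorentz-space} endpoints $L^{p_0,q_0},L^{p_1,q_1}$ while keeping the equivalence uniform, since Hunt's Theorem is stated only for Lebesgue spaces. The clean route is: (a) establish the result for Lebesgue endpoints by the explicit $K$-functional plus Hardy inequalities as above; (b) observe $L^{p_i,q_i}=(L^{a_i},L^{b_i})_{\theta_i,q_i}$ for suitable $a_i<p_i<b_i$ and $\theta_i$ with $\frac1{p_i}=\frac{1-\theta_i}{a_i}+\frac{\theta_i}{b_i}$ (by part of the very statement we are proving, applied to Lebesgue endpoints — so one must order the proof so that the Lebesgue-endpoint case of (i) is available first); (c) invoke Lions--Peetre reiteration (Lemma \ref{lem:LP}) to identify $(L^{p_0,q_0},L^{p_1,q_1})_{\theta,q}$ with an iterated interpolation space of Lebesgue endpoints and reduce to case (a) again. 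One must check that the reiteration hypotheses ($0<\theta_0,\theta_1,\eta<1$ and the convexity relation on the $\theta$'s) can indeed be arranged, and that the Lorentz-index arithmetic $\frac1q=\frac{1-\theta}{q_0}+\frac{\theta}{q_1}$ in part (ii) is exactly what reiteration produces; this matching of indices is the delicate step, everything else being standard Hardy-inequality manipulation on $f^\ast$.
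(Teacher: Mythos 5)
The paper does not actually prove this lemma: it is quoted as a classical fact (essentially Theorem 5.3.1 of Bergh--L\"ofstr\"om \cite{BL}) with the remark that it follows from Hunt's $K$-functional formula, so the only question is whether your outline would deliver a proof. For part (i) it would, with one correction: in your step (b) you write $L^{p_i,q_i}=(L^{a_i},L^{b_i})_{\theta_i,q_i}$ with endpoint exponents depending on $i$, but the reiteration theorem (Lemma \ref{lem:LP}) applies only to two interpolation spaces of one and the same couple, so you must fix a single couple $(L^{a},L^{b})$ with $a<\min(p_0,p_1)\le\max(p_0,p_1)<b$ and represent both $L^{p_0,q_0}$ and $L^{p_1,q_1}$ as $(L^{a},L^{b})_{\theta_i,q_i}$ with $\theta_0\neq\theta_1$. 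With that change, the Lebesgue-endpoint computation (Hunt's formula, the substitution $u=t^{\alpha}$, the power-weight Hardy inequalities for the monotone function $f^{*}$; your exponent check $\frac{1}{p_0}-\theta(\frac{1}{p_0}-\frac{1}{p_1})=\frac{1}{p}$ is right) combined with off-diagonal reiteration gives (i), including the independence of $q_0,q_1$.

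Part (ii) contains a genuine gap. When $p_0=p_1=p$ the two endpoints are $(L^{a},L^{b})_{\theta_0,q_0}$ and $(L^{a},L^{b})_{\theta_1,q_1}$ with $\theta_0=\theta_1$, and neither Lemma \ref{lem:LP} nor Lemma \ref{th:Holmstedt} (which assumes $\theta_0<\theta_1$) covers this diagonal case; indeed the conclusion of Lemma \ref{lem:LP} would be false there, since the answer must depend on $q_0,q_1$ through $\frac{1}{q}=\frac{1-\theta}{q_0}+\frac{\theta}{q_1}$. What is needed is the separate equal-$\theta$ reiteration theorem, or equivalently a direct formula for $K(t,f,L^{p,q_0},L^{p,q_1})$. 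Your proposed shortcut --- read $L^{p,q_i}$ as the weighted space $L^{q_i}\bigl(\tau^{q_i/p}\frac{d\tau}{\tau}\bigr)$ of $f^{*}$ and apply the weighted-Lebesgue interpolation identity ``pointwise in the weight'' --- does not work as stated, because $f\mapsto f^{*}$ is not linear: a decomposition $f=f_0+f_1$ does not induce $f^{*}=f_0^{*}+f_1^{*}$, so the couple $(L^{p,q_0},L^{p,q_1})$ is not literally a couple of weighted Lebesgue spaces on $(0,\infty)$ and the two $K$-functionals need not agree without further argument. The standard repairs are a retract argument identifying $(L^{p,q_0},L^{p,q_1})$ with a retract of the weighted couple, the power theorem, or Holmstedt's equal-$\theta$ formula for $K(t,f,\bar A_{\theta,q_0},\bar A_{\theta,q_1})$; one of these must be supplied, and none of them follows from the lemmas you cite.
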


Lions-Peetre's reiteration theorem cannot be used to study the real interpolation
of general function spaces for general indices.
See  Cwikel's work   \cite{C} in 1974.
For example, we did not know the real interpolation spaces for the general indices of the following function spaces:
Besov spaces, Triebel-Lizorkin spaces,
Besov-Lorentz spaces and Tribel-Lizorkin-Lorentz spaces.
As we know, the corresponding interpolation spaces are far from clear.
See \cite{BL, Peetre, Triebel, TriebelB}.
Yang-Cheng-Peng \cite{YCP} have used wavelets, vector valued Fefferman-Stein maximum operator and Whitney decomposition
to consider some properties of Tribel-Lizorkin-Lorentz spaces and Besov-Lorentz spaces.
Similar to Hunt's work \cite{Hunt1},
we can extend Marcinkiewicz's theorem to Besov hierarchical spaces $\dot{l}^{s,q}(A)$.

\subsection{Real interpolation spaces for Besov spaces}

Besov spaces, developed in the 1950s, are very important
in harmonic analysis, operator theory, approximation theory and differential equations.
For decades, there have been many works focusing on Besov spaces and interpolation spaces and their applications,
see \cite{AK,AKMNP5, BaC, BL, BC, BCT, CL, CD,DP,DY, HS, HP, LiuYY, ST, Triebel,TriebelB,YDC,YCP, ZYY}.
We know that the Besov space $B^{s,q}_{p}= l^{s,q}(L^{p})$.
A more general analogy to Besov spaces,
is to replace $L^p$ with a quasi Banach space $A$ and we consider $l^{s,q}(A)$.
One of the cases is to take $A$ as some Lorentz space $L^{p,\tau}$
and $B^{s,q}_{p,\tau}= l^{s,q}(L^{p,\tau})$ is Besov-Lorentz spaces
which has been studied in \cite{Peetre} and \cite{YCP}.
Triebel-Lizorkin-Lorentz spaces and Besov-Lorentz spaces
have been applied to the study of Navier-Stokes equations in
\cite{BHT} and \cite{HS}.
Lorentz space reflects the lowest energy characteristics and are also used in the study of equations.

For decades, there has been many work focusing on Besov type spaces and interpolation spaces and their applications,
see \cite{AK,AKMNP5, BL, BC, BCT, CD,DP,DY, HS, HP, ST, Triebel,TriebelB,YDC,YCP}.
When $p_0=p_1=p$ and $r=q$, $(\dot{B}_{p}^{s_{0},q_{0}}, \dot{B}_{p}^{s_{_{1}},q_{1}})_{\theta, r}$ are still Besov spaces.
D.C. Yang has considered such cases for metric spaces, see \cite{BL,TriebelB,YDC}.
Q.X. Yang has been paying attention to real interpolation spaces since 2000,
and several European mathematicians have inquired about Yang-Cheng-Peng's results  in various ways.
See \cite{BC, BCT, CD, HS, YCP}.
However, due to the existence of multiple-nonlinear winding structure in real interpolation space,
even for the real interpolation space of Besov space, a very few index spaces have the concrete expression.
Peetre proposed on page 110 of \cite{Peetre} to find out
the specific expression form of the real interpolation space
$(B^{s_0, q_0}_{p_0}, B^{s_1, q_1}_{p_1})_{\theta, r}$ when $r\neq q$.
Devore-Popov have considered the relation of K functional
between Besov spaces and the corresponding discrete spaces for frequency in Theorem 6.1 of \cite{DP}.
Lou-Yang-He-He \cite{LYHH} made progress for $s_0=s_1$ and $p_0\neq p_1$.
Besoy-Haroske-Triebel \cite{BHT} paid attention to this Peetre's longstanding open problem
and considered the cases where $p_0=q_0$, $p_1=q_1$  and $s_0-s_1= \frac{\alpha}{p_0}-\frac{\alpha}{p_1}$.
Yang-Yang-Zou-He's result \cite{YYZH}
was reported at two international conferences last year,
and they mentioned that
they used wavelets and grid K functionals
to find the part of spaces that run out of Besov type spaces,
and completely solved this longstanding open problem
on real interpolation spaces of Besov spaces.

\subsection{Real interpolation spaces for Besov type spaces}
Peetre \cite{Peetre} applied Lions-Peetre's iterative Theorem of interpolation and other skills
to get a part of real interpolation spaces of Besov type spaces.
K functional for $r=q$ can be obtained by the norm structure of the function space.

\begin{lemma} \label{lem:3.4}
{\bf See Theorem 4 of page 98 in \cite{Peetre} }
Given $s_0,s_1\in \mathbb{R}, 0<q_0,q_1\leq \infty$, $0<\theta<1$
and $\frac{1}{q}= \frac{1-\theta}{q_0} + \frac{\theta}{q_1}$.
Given $A_0, A_1$ Banach spaces.

(i) If $s_0\neq s_1$, $s= (1-\theta) s_0 + \theta s_1$ and $A_0=A_1=A$, then
$$(l^{s_0,q_0}(A), l^{s_1,q_1}(A))_{\theta, r}= l^{s,r}(A).$$

(ii) If $s_0= s_1=s$, $r=q$ and $A_0=A_1=A$, then
$$(l^{s_0,q_0}(A), l^{s_1,q_1}(A))_{\theta, r}= l^{s,r}(A).$$

(iii) If $s_0\neq s_1$, $s= (1-\theta) s_0 + \theta s_1$ and $r=q$, then
$$(l^{s_0,q_0}(A_0), l^{s_1,q_1}(A_1))_{\theta, r}= l^{s,r}((A_0,A_1)_{\theta, r}).$$
\end{lemma}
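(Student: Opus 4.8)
The plan is to move from the distribution couples to couples of weighted vector-valued sequence spaces, and then to read off the three equalities from a direct computation of the $K$-functional; the hypothesis $s_0\neq s_1$ will supply the extra freedom in (i), while the harmonic-mean relation $\frac1q=\frac{1-\theta}{q_0}+\frac\theta{q_1}$ will be forced precisely where $r=q$ is assumed in (ii) and (iii). First I would reduce to sequences: for a (pseudo-)Banach space $A$, $s\in\mathbb{R}$ and $0<q\le\infty$, set $\ell^q_s(A):=\{(a_u)_{u\ge0}:\sum_u 2^{usq}\|a_u\|_A^q<\infty\}$. By Definition \ref{de1.1} the Littlewood--Paley block map $\iota:f\mapsto(f^u)_u$ embeds $l^{s,q}(A)$ into $\ell^q_s(A)$, and a reconstruction operator $S$, built from slightly fattened functions $\psi_u$, satisfies $S\iota=\mathrm{id}$ and maps $\ell^q_s(A)$ boundedly onto $l^{s,q}(A)$ (the boundedness coming from a Fourier-multiplier estimate on $A$), the same pair $(\iota,S)$ working for both index triples $(s_0,q_0,A_0)$ and $(s_1,q_1,A_1)$. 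Since real interpolation commutes with retractions, it then suffices to identify $(\ell^{q_0}_{s_0}(A_0),\ell^{q_1}_{s_1}(A_1))_{\theta,r}$ and transport the answer back through $\iota$ and $S$.

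For (i), where $A_0=A_1=A$, I would write $c_u:=\|a_u\|_A$ and, for a threshold $t$, use the splitting $a^0_u=a_u$ for $u\ge N$, $a^1_u=a_u$ for $u<N$, with $2^{N(s_0-s_1)}\approx t$. This gives $K(t,a;\ell^{q_0}_{s_0}(A),\ell^{q_1}_{s_1}(A))\lesssim(\sum_{u\ge N}(2^{us_0}c_u)^{q_0})^{1/q_0}+t(\sum_{u<N}(2^{us_1}c_u)^{q_1})^{1/q_1}$, and splitting an arbitrary decomposition coordinatewise gives the matching lower bound $K(t,a)\ge\min(2^{us_0},t\,2^{us_1})c_u$ for each $u$. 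Feeding these into $\int_0^\infty(t^{-\theta}K(t,a))^r\frac{dt}t$ and substituting $t=2^{(s_0-s_1)\tau}$, the two $q_i$-tail sums collapse, by Hardy/Holmstedt-type inequalities, into the single sum $\sum_u 2^{usr}c_u^r$ with $s=(1-\theta)s_0+\theta s_1$ --- this collapse works exactly because the cut $N\approx\tau$ slides geometrically with $t$, which is where $s_0\neq s_1$ is used. Hence $(\ell^{q_0}_{s_0}(A),\ell^{q_1}_{s_1}(A))_{\theta,r}=\ell^r_s(A)$ for every $0<r\le\infty$, and the reduction yields (i).

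For (ii), where $s_0=s_1=s$ and $A_0=A_1=A$, I would factor out the common weight $2^{us}$ and use the elementary identity $K(t,(a_u)_u;\ell^{q_0}(A),\ell^{q_1}(A))=K(t,(\|a_u\|_A)_u;\ell^{q_0},\ell^{q_1})$ (push a scalar decomposition up to $A$ in one direction, push an $A$-valued one down by truncation in the other); this reduces (ii) to the scalar fact $(\ell^{q_0},\ell^{q_1})_{\theta,q}=\ell^{q,q}=\ell^q$ under $\frac1q=\frac{1-\theta}{q_0}+\frac\theta{q_1}$, which is the sequence version of Hunt's theorem (the discrete analogue of Lemma \ref{lem:2.5}) followed by the interpolation-integral computation. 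For (iii), where $r=q$ and $s_0\neq s_1$, the substitution $a_u\mapsto2^{us_0}a_u$ turns the couple into $(\ell^{q_0}(A_0),\ell^{q_1}_{s_1-s_0}(A_1))$, and the Lions--Peetre (Stein--Weiss-type) interpolation theorem for weighted vector-valued $\ell^{q_i}$-spaces (see \cite{LP,BL,TriebelB}) gives, for $\frac1q=\frac{1-\theta}{q_0}+\frac\theta{q_1}$, the equality $(\ell^{q_0}_{s_0}(A_0),\ell^{q_1}_{s_1}(A_1))_{\theta,q}=\ell^q_s((A_0,A_1)_{\theta,q})$ with $s=(1-\theta)s_0+\theta s_1$; the reduction then yields (iii). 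When $A_0=A_1=A$ this is consistent with (i), since $(A,A)_{\theta,q}=A$.

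I expect the main obstacle to be exactly the Lions--Peetre step invoked in (iii): gluing the fibrewise spaces $(A_0,A_1)_{\theta,q}$ along the frequency lattice when $q_0\neq q_1$ requires the $J$-method ``power trick'', whose validity depends precisely on the relation $\frac1q=\frac{1-\theta}{q_0}+\frac\theta{q_1}$. This is the same phenomenon for which, by Cwikel \cite{C}, there is no analogue when $r\neq q$, and it is this obstruction that the general Theorem \ref{th:main} must circumvent by wavelet and grid-topology arguments in place of reiteration. A purely routine side-issue is the multiplier boundedness of the reconstruction operator $S$ on the pseudo-Banach fibres $A_i$ used in the reduction.
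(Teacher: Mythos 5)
The paper offers no proof of this lemma at all: it is quoted verbatim as Theorem~4, p.~98 of Peetre's book \cite{Peetre}, so there is no in-paper argument to compare against. Your reconstruction is the standard classical route (retraction onto weighted vector-valued sequence spaces, then explicit $K$-functional computations), and it is essentially correct; it is in substance the proof one finds in Peetre and in Bergh--L\"ofstr\"om (Theorems 5.6.1, 5.6.2 and 6.4.3 of \cite{BL}). Two remarks. First, in part (iii) your final step invokes the Lions--Peetre theorem for the couple $(\ell^{q_0}_{s_0}(A_0),\ell^{q_1}_{s_1}(A_1))$, which after the retraction \emph{is} the statement being proved; so your argument for (iii) is a reduction to an equivalent known theorem rather than a self-contained proof. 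That is perfectly acceptable here, since the lemma itself is a citation, but it is worth being aware that the genuine content (the $J$-method power trick under $\frac1q=\frac{1-\theta}{q_0}+\frac{\theta}{q_1}$, which fails for $r\neq q$ by Cwikel \cite{C}) is exactly the part you are importing rather than proving. Second, two small points of hygiene: the orientation of your cut $a^0_u=a_u$ for $u\ge N$ presupposes $s_0<s_1$ (for $s_0>s_1$ the roles of the two tails swap, which is harmless after relabelling via Lemma \ref{lem:commutative}); and the identity $K(t,(a_u);\ell^{q_0}(A),\ell^{q_1}(A))=K(t,(\|a_u\|_A);\ell^{q_0},\ell^{q_1})$ used in (ii) holds with equality for Banach $A$ (as assumed in the lemma) but only up to constants for quasi-Banach fibres, via the truncation argument you sketch. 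Neither affects the validity of the proposal.
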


Note that $q$ is defined by $q_0,q_1$ and $\theta$, the condition $r=q$ in Lemma \ref{lem:3.4}
implies that very few indices are known for the real interpolation space.
{\bf The norm of function space only reflects the continuous structure of functions,
but K functional reflects the internal geometric structure of functions and non-linearity}.
In 1974, Cwikel \cite{C} proved that the Lions-Peetre formula for
$(l^{q_0}(A_0), l^{q_1}(A_1))_{\theta,r}$
have no reasonable generalization for any $r\neq q $.
One did not know how to deal with the case where $r\neq q$.
Peetre prosed to consider the Open problem \ref{con:1}.
See his book \cite{Peetre67book} in 1967 or
page 104 of his English translation \cite{Peetre} in 1976.

Lorentz \cite{Lorentz}
considered distribution functions and
non-increasing rearrangement functions
and introduce Lorentz spaces to extend Lebesgue spaces in the 1950s.
In 1964,
Hunt \cite{Hunt1, Hunt} extend Marcinkiewiez theorem to Lorentz spaces and
revealed the relation between the real interpolation space of Lebesgue space and Lorentz space.
Devore-Popov have considered the relation of K functional
between Besov spaces and the corresponding discrete spaces for frequency in Theorem 6.1 of \cite{DP}.
The Peetre's longstanding open problem on hierachical peudo-Banach spaces $l^{s,q}(A)$ never got off the ground
because of the lack of a way to deal with its nonlinearities.
In this paper, we consider systematically the interpolation spaces for Besov type spaces $\dot{l}^{s,q}(A)$ by wavelets
and solve Peetre's longstanding open problem at page 104 of \cite{Peetre}.

\section{Nonlinear functional calculus and Vertex functional} \label{SEC5}

In this section,
we consider the functional corresponding to the wavelet basis
and study its nonlinear structure.
Devore-Popov have considered the relation of K functional
between Besov spaces and the corresponding discrete spaces for frequency in main Theorem 6.1 of \cite{DP}.
First, we extend their theorem to wavelet K functional for general Besov hierarchical spaces.
By analyzing the wavelet coefficient characteristics of the approximation wavelet functional,
we get cuboid K functional.
It is concluded that wavelet basis is an unconditional basis of interpolation space.
Further, using the characteristics that both function spaces satisfy the quasi-trigonometric inequality,
through functional calculus, we transform cuboid K functional to vertex K functional.
Through nonlinear functional transformation,
we transform the calculation problem of functional into the classification problem of vertices.
This provides a theoretical basis for the study of topological structures on lattice points.

\subsection{From wavelet functional to cuboid functional}\label{sec:4.1}
In this subsection,
we consider a wider range of spaces than Devore-Popov's Theorem 6 in\cite{DP},
and the corresponding theorem is further accurate to the case of cuboid K functionals.
In fact, we prove that
K functional is equivalent to the infimum over the cuboid defined by the absolute value of wavelet coefficients.

Continuous K functional is the lower bound of the dynamic norm from $(\mathcal{S}', \mathcal{S}')$ to $\mathbb{R}_{+}$.
By using wavelets, we transform the dynamic norm to the discrete dynamic norm
from $(\mathbb{C}^{\dot{\Lambda}}, \mathbb{C}^{\dot{\Lambda}})$ to $\mathbb{R}_{+}$.
Thus, we transform the relative K functional to the minimal functional on grid $\dot{\Lambda}$
which takes values in $\mathbb{C}^{\dot{\Lambda}}$.
By wavelet characterization Lemma \ref{lem:cbesov} on Besov type spaces,
we clarify the Devore-Popov discretization theorem in \cite{DP}
and we obtain the following wavelet K functional.
\begin{theorem}\label{th:6.1}
Given $s_0,s_1\in \mathbb{R}, 0<p_0,p_1,q_0,q_1\leq \infty, 0<\xi<\infty$.
K functional for $f \in \dot{l}^{s_0,q_0}(A_0)+ \dot{l}^{s_1,q_1}(A_1)\in S'(\mathbb{R}^{n})$
can be transformed to the infimum of sequence $f_{j,\gamma}$
defined on $\dot{l}^{s_0,q_0}(A_0)+ \dot{l}^{s_1,q_1}(A_1)\in \mathbb{C}^{\dot{\Lambda}}$:
$$\begin{array}{l}
K (t,f, \dot{l}^{s_0,q_0}(A_0), \dot{l}^{s_1,q_1}(A_1))
=K (t,\{f_{j, \gamma}\}, \dot{l}^{s_0,q_0}(A_0), \dot{l}^{s_1,q_1}(A_1)).
\end{array}$$
\end{theorem}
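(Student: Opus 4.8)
The plan is to identify the function $f=\sum_{(j,\gamma)\in\dot\Lambda} f_{j,\gamma}\Phi_{j,\gamma}$ with its coefficient sequence $\{f_{j,\gamma}\}$ via the isomorphism of Theorem \ref{lem:dtod}, and then to show that the two $K$-functionals agree by comparing the admissible decompositions on each side. Concretely, given any decomposition $f=g+h$ with $g\in\dot l^{s_0,q_0}(A_0)$ and $h\in\dot l^{s_1,q_1}(A_1)$ in the sense of distributions, I would apply the wavelet projection (which is bounded on both spaces) to obtain the coefficient-level decomposition $\{f_{j,\gamma}\}=\{g_{j,\gamma}\}+\{h_{j,\gamma}\}$, with norms controlled up to equivalence constants by Theorem \ref{lem:cbesov}. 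This yields one inequality, $K(t,\{f_{j,\gamma}\},\cdot)\lesssim K(t,f,\cdot)$. For the reverse inequality, any coefficient decomposition $\{f_{j,\gamma}\}=\{a_{j,\gamma}\}+\{b_{j,\gamma}\}$ is synthesized back into $f=(\sum a_{j,\gamma}\Phi_{j,\gamma})+(\sum b_{j,\gamma}\Phi_{j,\gamma})$, a genuine distributional decomposition, and again Theorem \ref{lem:cbesov} bounds the distributional norms by the sequence norms. Since constants can be absorbed, the two infima coincide.

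The key steps, in order, are: (1) invoke Theorem \ref{lem:dtod} to pass between $f\in S'$ (or $S'_0$) and its coefficient array, so that "$f\in\dot l^{s_i,q_i}(A_i)$" literally means the coefficient array lies in the corresponding sequence space; (2) record, via Theorem \ref{lem:cbesov} and the hypothesis \eqref{eq:absolute}, that $\|f\|_{\dot l^{s_i,q_i}(A_i)}\sim\big(\sum_j 2^{jq_i s_i}\|\{|f_{j,\gamma}|\}_\gamma\|_{A_i}^{q_i}\big)^{1/q_i}$, so the norm sees only the moduli of the coefficients; (3) check that the wavelet analysis/synthesis operators are bounded on $\dot l^{s_0,q_0}(A_0)+\dot l^{s_1,q_1}(A_1)$ — this is immediate from (1)–(2) since they are literally the identity at the coefficient level; (4) chase the two inequalities between infima as sketched above, noting that the decomposition $f=f_0+f_1$ on the function side and $\{f_{j,\gamma}\}=\{f_{0,j,\gamma}\}+\{f_{1,j,\gamma}\}$ on the sequence side are in bijective correspondence through the isomorphism, so the infima are over the same set and in fact \emph{equal}, not merely equivalent.

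The main obstacle is a bookkeeping subtlety rather than a deep one: one must be careful that a decomposition $f=f_0+f_1$ with $f_0\in A_0$-space and $f_1\in A_1$-space, taken in $S'$, does decompose \emph{coefficientwise} — i.e. that $\langle f_0,\Phi_{j,\gamma}\rangle+\langle f_1,\Phi_{j,\gamma}\rangle=\langle f,\Phi_{j,\gamma}\rangle$ — which is clear by linearity of the pairing, but one should make sure the wavelet coefficients of $f_0$ alone already lie in $\dot l^{s_0,q_0}(A_0)$ on the nose (this is where Theorem \ref{lem:cbesov} and \eqref{eq:absolute} are essential, since without \eqref{eq:absolute} the $A_0$-norm of the partial sum need not be controlled by the coefficient moduli). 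Once that is pinned down, the equality of the two $K$-functionals follows because the admissible sets of decompositions are literally identified by the isomorphism of Theorem \ref{lem:dtod}; the statement is really the assertion that this identification is isometric for the relevant sum spaces, which is exactly the content of the preceding wavelet characterizations.
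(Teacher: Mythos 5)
Your proposal is correct and takes essentially the same route as the paper: the paper likewise writes the dynamic norm $K(t,f,\dot{l}^{s_0,q_0}(A_0),\dot{l}^{s_1,q_1}(A_1),g)$ directly in terms of the wavelet coefficient sequences via the characterization of Theorem \ref{lem:cbesov}, identifies it with the sequence-level quantity, and observes that the infimum over distributional decompositions and the infimum over coefficient decompositions range over sets identified by the isomorphism of Theorem \ref{lem:dtod}. Your extra remarks on coefficientwise additivity of the pairing and on the role of \eqref{eq:absolute} are consistent with, and slightly more explicit than, the paper's argument.
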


\begin{proof}
For $f= (f_{j,\gamma})_{(j,\gamma)\in \dot{\Lambda}}$ and $g= (g_{j,\gamma})_{(j,\gamma)\in \dot{\Lambda}}$,
consider the norm with perturbation under threshold $t$ defined by the sum of sequence
\begin{equation}\label{eq:Kxi}
\begin{array}{rcl} && K (t,f, \dot{l}^{s_0,q_0}(A_0), \dot{l}^{s_1,q_1}(A_1), g)\\
& =&
[\sum\limits_{j\in \mathbb{Z}} 2^{j q_0 s_0}
\|\{ g_{j,\gamma}\}_{\gamma\in \dot{\Gamma}}\|_{A_0} ^{q_0} ] ^{\frac{1}{q_0}}
 + t [\sum\limits_{j\in \mathbb{Z}} 2^{j q_1 s_1}
\|\{g_{j,\gamma}-f_{j,\gamma}\}_{\gamma\in \dot{\Gamma}}\|_{A_1} ^{q_1}) ] ^{\frac{1}{q_1}}\\
& \equiv &
K (t,\{f_{j, \gamma}\}, \dot{l}^{s_0,q_0}(A_0), \dot{l}^{s_1,q_1}(A_1), \{g_{j,\gamma}\}).
\end{array}
\end{equation}
$K (t,f, \dot{l}^{s_0,q_0}(A_0), \dot{l}^{s_1,q_1}(A_1), g)$ is defined for
$f, g\in \dot{l}^{s_0,q_0}(A_0)+ \dot{l}^{s_1,q_1}(A_1)\subset S'(\mathbb{R}^{n})$.
But $K (t,\{f_{j, \gamma}\}, \dot{l}^{s_0,q_0}(A_0), \dot{l}^{s_1,q_1}(A_1), \{g_{j,\gamma}\})$ is defined for
\small{$\{f_{j,\gamma}\}_{(j,\gamma)\in \dot{\Lambda}}, \linebreak \{g_{j,\gamma}\}_{(j,\gamma)\in \dot{\Lambda}} \in \dot{l}^{s_0,q_0}(A_0)+ \dot{l}^{s_1,q_1}(A_1)\subset \mathbb{C}^{\dot{\Lambda}}.$}
Thus $K (t,f, \dot{l}^{s_0,q_0}(A_0), \dot{l}^{s_1,q_1}(A_1), g)$ maps $\dot{l}^{s_0,q_0}(A_0)+ \dot{l}^{s_1,q_1}(A_1)$ to $\mathbb{R}_{+}$
and $K (t,\{f_{j, \gamma}\}, \dot{l}^{s_0,q_0}(A_0), \dot{l}^{s_1,q_1}(A_1), \{g_{j,\gamma}\})$ maps \linebreak
$\dot{l}^{s_0,q_0}(A_0)+ \dot{l}^{s_1,q_1}(A_1)$ to $\mathbb{R}_{+}$.
The above equation \eqref{eq:Kxi} transform the study of K functional to which of sequence.
$$\begin{array}{l}
K (t,f, \dot{l}^{s_0,q_0}(A_0), \dot{l}^{s_1,q_1}(A_1)) =
\inf \limits_{g\in \dot{l}^{s_0,q_0}(A_0) } K (t,f, \dot{l}^{s_0,q_0}(A_0), \dot{l}^{s_1,q_1}(A_1), g) \\
= \inf \limits_{\{g_{j,\gamma}\}_{(j,\gamma)\in \dot{\Lambda}}\in \dot{l}^{s_0,q_0}(A_0) } K (t,\{f_{j, \gamma}\}, \dot{l}^{s_0,q_0}(A_0), \dot{l}^{s_1,q_1}(A_1), \{g_{j,\gamma}\})\\
=K (t,\{f_{j, \gamma}\}, \dot{l}^{s_0,q_0}(A_0), \dot{l}^{s_1,q_1}(A_1)).
\end{array}$$

\end{proof}

Then we analyze the wavelet coefficient characteristics of approximate wavelet functional,
we turn the K functionals over the infinite complex field
into the K functionals over the cuboid of the real field.
For $(j,\gamma)\in \dot{\Lambda}$, define
$$(Bf)_{j,\gamma}= : \left \{
\begin{array}{cl}
\frac{ f_{j,\gamma}}{|f_{j,\gamma}|}, & {\mbox \rm if\, } f_{j,\gamma}\neq 0; \\
1, & {\mbox \rm if\, } f_{j,\gamma}= 0.
\end{array}
\right. $$
Then $|(Bf)_{j,\gamma}|=1$ and $f_{j,\gamma}= |f_{j,\gamma}| (Bf)_{j,\gamma}.$
Let $$ \mathfrak{C}^{\dot{\Lambda}}_{f}= \{ \{g_{j,\gamma}\}_{(j,\gamma)\in \dot{\Lambda}}: 0\leq g_{j,\gamma}\leq |f_{j,\gamma}|\}$$
denote an infinite cuboid  in $\mathbb{R}^{\dot{\Lambda}}_{+}$.
For function $f$, using positive equivalence classes,
we transform then the K functional to the infimum of positive sequence  on infinite cuboid
$ \mathfrak{C}^{\dot{\Lambda}}_{f}$.
For $\{g_{j,\gamma}\}_{(j,\gamma)\in \dot{\Lambda}}$,
denote $\tilde{g}_{j,\gamma}= g_{j,\gamma}  (Bf)_{j,\gamma}$ and $\tilde{g}= \{\tilde{g}_{j,\gamma}\}_{(j,\gamma)\in \dot{\Lambda}}$.
Let $$K_{\mathfrak{C}}(t,f, \dot{l}^{s_0,q_0}(A_0), \dot{l}^{s_1,q_1}(A_1))=
\inf \limits_{\{g_{j,\gamma}\}_{(j,\gamma)\in \dot{\Lambda}}\in \mathfrak{C}^{\dot{\Lambda}}_{f}}
K (t,f, \dot{l}^{s_0,q_0}(A_0), \dot{l}^{s_1,q_1}(A_1), \tilde{g}).$$
$K_{\mathfrak{C}}$ denote the  functional on infinite cuboid.
The cuboid functional $K_{\mathfrak{C}}$ on infinite cuboid equals the $K$ functional.
\begin{theorem}\label{th:5.3}
For all $s_0,s_1\in \mathbb{R}$ and $0<q_0,q_1\leq \infty$,
K functional is equal to $K_{\mathfrak{C}}$ functional:
\begin{equation}\label{eq:cuboid}
\begin{array}{rcl}
K(t,f, \dot{l}^{s_0,q_0}(A_0), \dot{l}^{s_1,q_1}(A_1))
&=&K_{\mathfrak{C}}(t,f, \dot{l}^{s_0,q_0}(A_0), \dot{l}^{s_1,q_1}(A_1)).
\end{array}
\end{equation}
\end{theorem}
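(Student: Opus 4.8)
The plan is to show the two inequalities $K \le K_{\mathfrak{C}}$ and $K_{\mathfrak{C}} \le K$ separately, exploiting the fact established in Theorem \ref{th:6.1} that the continuous $K$ functional equals the sequence $K$ functional $K(t,\{f_{j,\gamma}\},\dot{l}^{s_0,q_0}(A_0),\dot{l}^{s_1,q_1}(A_1))$, which is an infimum over \emph{all} sequences $\{g_{j,\gamma}\}\in \dot{l}^{s_0,q_0}(A_0)$. The inequality $K \le K_{\mathfrak{C}}$ is then immediate: for any $\{g_{j,\gamma}\}\in \mathfrak{C}^{\dot{\Lambda}}_f$, the twisted sequence $\tilde g = \{g_{j,\gamma}(Bf)_{j,\gamma}\}$ is an admissible competitor in the unrestricted infimum defining $K$, so passing to the infimum over the cuboid gives $K\le K_{\mathfrak{C}}$. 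Hence the whole content is in the reverse inequality $K_{\mathfrak{C}}\le K$, i.e. showing that restricting the competing sequences to the cuboid $\mathfrak{C}^{\dot{\Lambda}}_f$ (and, implicitly, to sequences of the phase-aligned form $\tilde g$) does not increase the infimum.

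For the reverse inequality, I would start from an arbitrary admissible splitting $f=g+(f-g)$ with $\{g_{j,\gamma}\}\in \dot{l}^{s_0,q_0}(A_0)$, and construct a \emph{truncated} competitor living in the cuboid. The natural choice is to define, coordinatewise, $h_{j,\gamma}$ to be the ``radial projection'' of $g_{j,\gamma}$ onto the segment $\{z(Bf)_{j,\gamma}: 0\le z\le |f_{j,\gamma}|\}$: concretely, write $g_{j,\gamma} = \rho_{j,\gamma}\omega_{j,\gamma}$ with $\rho_{j,\gamma}\ge 0$, and then compare $g_{j,\gamma}$ against the candidate $m_{j,\gamma} = \min(\rho_{j,\gamma}, |f_{j,\gamma}|)\,(Bf)_{j,\gamma}$. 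One must check two things: that $|m_{j,\gamma}| \le |g_{j,\gamma}|$ (so the $A_0$-norm does not increase, using Theorem \ref{lem:cbesov}, i.e. that the norm on each ring $A(\dot\Gamma_j)$ depends only on the absolute values of the coefficients via \eqref{eq:absolute} and is monotone under coordinatewise domination), and that $|f_{j,\gamma}-m_{j,\gamma}| \le |f_{j,\gamma}-g_{j,\gamma}|$ (so the $A_1$-norm of the ``remainder'' does not increase). The second inequality is the key elementary geometric fact: on the complex line through $0$ and $f_{j,\gamma}$, moving $g_{j,\gamma}$ to its nearest point $m_{j,\gamma}$ of the segment $[0,f_{j,\gamma}]$ can only decrease the distance to the endpoint $f_{j,\gamma}$; and if $g_{j,\gamma}$ is off that line, projecting it first onto the line and then onto the segment decreases both $|g_{j,\gamma}|$ and $|f_{j,\gamma}-g_{j,\gamma}|$ simultaneously (a convexity/contraction argument: radial truncation toward the segment $[0,f_{j,\gamma}]$ is a $1$-Lipschitz map fixing $0$ and $f_{j,\gamma}$).

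Once this coordinatewise domination is in hand, the definition \eqref{eq:Kxi} of $K(t,f,\cdot,\cdot,g)$ together with the monotonicity of $\|\cdot\|_{A_i(\dot\Gamma_j)}$ under coordinatewise domination of absolute values (condition \eqref{eq:absolute}, Theorem \ref{lem:cbesov}) and the monotonicity of the $\ell^{q_i}$-type sums yields
$$K(t,f,\dot{l}^{s_0,q_0}(A_0),\dot{l}^{s_1,q_1}(A_1),\tilde m)\le K(t,f,\dot{l}^{s_0,q_0}(A_0),\dot{l}^{s_1,q_1}(A_1),g),$$
where $m_{j,\gamma}= g'_{j,\gamma}(Bf)_{j,\gamma}$ with $g'_{j,\gamma}=\min(\rho_{j,\gamma},|f_{j,\gamma}|)\in[0,|f_{j,\gamma}|]$, so $\{g'_{j,\gamma}\}\in\mathfrak{C}^{\dot\Lambda}_f$ and $\tilde m$ is exactly a twisted cuboid sequence. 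Taking the infimum over all $g$ on the right and noting that the left side is $\ge K_{\mathfrak{C}}$ gives $K_{\mathfrak{C}}\le K$, which combined with the first paragraph proves \eqref{eq:cuboid}.

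The main obstacle I anticipate is being careful about the phase/absolute-value bookkeeping when $A_0\ne A_1$ and when $q_i=\infty$: the argument must use that the relevant ring norms are invariant under the phase factor $(Bf)_{j,\gamma}$ and monotone under $|h_{j,\gamma}|\le|h'_{j,\gamma}|$ coordinatewise (which is precisely what \eqref{eq:absolute} plus the unconditional-basis reformulation in the Remark guarantees), and that the truncation map is applied coordinatewise so these ring-by-ring estimates assemble correctly into the global $\ell^{q_i}$ sums. A secondary point is to confirm that the truncated competitor $\{g'_{j,\gamma}(Bf)_{j,\gamma}\}$ still lies in $\dot{l}^{s_0,q_0}(A_0)$ (hence $f-$ that competitor lies in $\dot{l}^{s_1,q_1}(A_1)$), which is automatic from the domination $\|g'_{j,\gamma}\|\le\|g_{j,\gamma}\|$ on each ring. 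Everything else is the elementary planar geometry of radial truncation toward a segment.
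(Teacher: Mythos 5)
Your proposal is correct and follows essentially the same route as the paper: both reduce the unrestricted infimum to the cuboid by a coordinatewise substitution of the competitor that simultaneously does not increase $|g_{j,\gamma}|$ and $|f_{j,\gamma}-g_{j,\gamma}|$, and then invoke the monotonicity of the ring norms supplied by \eqref{eq:absolute}. Your single radial-truncation map $g_{j,\gamma}\mapsto\min(|g_{j,\gamma}|,|f_{j,\gamma}|)\,(Bf)_{j,\gamma}$ merely compresses the paper's four successive reductions (phase alignment, discarding the imaginary part, taking absolute values, folding $[\,|f_{j,\gamma}|,\infty)$ back into $[0,|f_{j,\gamma}|]$) into one step, and it shares with the paper the same implicit reliance on \eqref{eq:absolute} holding with genuine equality rather than mere equivalence if the conclusion is to be an identity rather than a two-sided bound.
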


\begin{proof}
(i) For $g_{j,\gamma}\in \mathbb{C}^{\dot{\Lambda}}$, we have
$$|\tilde{g}_{j,\gamma}-f_{j,\gamma}|= |(\tilde{g}_{j,\gamma}  -f_{j,\gamma}) (Bf)_{j,\gamma}^{-1}|
= |\tilde{g}_{j,\gamma} (Bf)_{j,\gamma}^{-1} -f_{j,\gamma} (Bf)_{j,\gamma}^{-1}|
= |g_{j,\gamma}  -|f_{j,\gamma}||.$$
That is to say,
$$K (t,\{f_{j, \gamma}\}, \dot{l}^{s_0,q_0}(A_0), \dot{l}^{s_1,q_1}(A_1), \tilde{g}_{j,\gamma})
= K (t,|f_{j, \gamma}|, \dot{l}^{s_0,q_0}(A_0), \dot{l}^{s_1,q_1}(A_1), \{g_{j,\gamma}\} ).$$
The above equality implies that
K functional defined by the infimum of sequence $\{f_{j,\gamma}\}_{(j,\gamma)\in \dot{\Lambda}} , \{\tilde{g}_{j,\gamma} \}_{(j,\gamma)\in \dot{\Lambda}}$
defined on $\dot{l}^{s_0,q_0}(A_0)+ \dot{l}^{s_1,q_1}(A_1)\in \mathbb{C}^{\dot{\Lambda}}$
can be transformed to the K functional defined by the infimum of sequence $|f_{j,\gamma}|,g_{j,\gamma}$
where $\{|f_{j,\gamma}|\}_{(j,\gamma)\in \dot{\Lambda}}$ is defined on $\dot{l}^{s_0,q_0}(A_0)+ \dot{l}^{s_1,q_1}(A_1)\in \mathbb{R}_{+}^{\dot{\Lambda}}$
and $\{g_{j,\gamma}\}_{(j,\gamma)\in \dot{\Lambda}}$ is defined on $\dot{l}^{s_0,q_0}(A_0)+ \dot{l}^{s_1,q_1}(A_1)\in \mathbb{C}^{\dot{\Lambda}}$.

(ii) If ${\rm Im}\, g_{j,\gamma}\neq 0$, then $$|g_{j,\gamma}-|f_{j,\gamma}||
= \sqrt{||f_{j,\gamma}|- {\rm Re}\, g_{j,\gamma}|^{2} + ({\rm Im}\, g_{j,\gamma})^{2} }
> ||f_{j,\gamma}|- {\rm Re}\, g_{j,\gamma}|.$$
Hence,
$$\begin{array}{rl}
&K (t,\{|f_{j, \gamma}|\}, \dot{l}^{s_0,q_0}(A_0), \dot{l}^{s_1,q_1}(A_1), \{g_{j,\gamma}\} )\\
> & K (t,\{|f_{j, \gamma}|\}, \dot{l}^{s_0,q_0}(A_0), \dot{l}^{s_1,q_1}(A_1), \{{\rm Re}\, g_{j,\gamma}\} ).
\end{array}$$
Let $ \Re= \{ \{g_{j,\gamma}\}_{(j,\gamma)\in \dot{\Lambda}}: g_{j,\gamma}\in \mathbb{R} \}$
denote that $\{g_{j,\gamma}\}_{(j,\gamma)\in \dot{\Lambda}}$ takes value in  $\mathbb{R}^{\dot{\Lambda}}$.
Hence
\begin{equation*}
\begin{array}{rl}
& K(t,f, \dot{l}^{s_0,q_0}(A_0), \dot{l}^{s_1,q_1}(A_1))\\
= &\inf \limits_{\{g_{j,\gamma}\}_{(j,\gamma)\in \dot{\Lambda}}\in \Re }
K (t,\{|f_{j,\gamma}| \}_{(j,\gamma)\in \dot{\Lambda}}, \dot{l}^{s_0,q_0}(A_0), \dot{l}^{s_1,q_1}(A_1), \{g_{j,\gamma}\}_{(j,\gamma)\in \dot{\Lambda}}).
\end{array}
\end{equation*}
The above equality implies that
K functional defined by the infimum of $K (t, f, \dot{l}^{s_0,q_0}(A_0), \dot{l}^{s_1,q_1}(A_1), \tilde{g}_{j,\gamma})$
where the sequence $\{f_{j,\gamma}\}_{(j,\gamma)\in \dot{\Lambda}}, \{\tilde{g}_{j,\gamma}\}_{(j,\gamma)\in \dot{\Lambda}}$
take values on $\dot{l}^{s_0,q_0}(A_0)  + \dot{l}^{s_1,q_1}(A_1)\in \mathbb{C}^{\dot{\Lambda}}$
can be transformed to the K functional defined by the infimum of sequence $\{|f_{j,\gamma}|\}_{(j,\gamma)\in \dot{\Lambda}}, \{g_{j,\gamma}\}_{(j,\gamma)\in \dot{\Lambda}}$
where $\{ |f_{j,\gamma}| \}_{(j,\gamma)\in \dot{\Lambda}} $ is defined on  $\dot{l}^{s_0,q_0}(A_0)+ \dot{l}^{s_1,q_1}(A_1)\in \mathbb{R}_{+}^{\dot{\Lambda}}$
and $\{g_{j,\gamma}\}_{(j,\gamma)\in \dot{\Lambda}} $ is defined on $\dot{l}^{s_0,q_0}(A_0)+ \dot{l}^{s_1,q_1}(A_1)\in \mathbb{R}^{\dot{\Lambda}}$.

(iii) Further, if $g_{j,\gamma}<0$, then
$$|f_{j,\gamma}|- g_{j,\gamma}=|f_{j,\gamma}|+|g_{j,\gamma}|> |f_{j,\gamma}|- |g_{j,\gamma}|.$$
Hence,
$$\begin{array}{rl}
& K (t,\{|f_{j, \gamma}|\}, \dot{l}^{s_0,q_0}(A_0), \dot{l}^{s_1,q_1}(A_1), \{g_{j,\gamma}\} )\\
> & K (t,\{|f_{j, \gamma}|\}, \dot{l}^{s_0,q_0}(A_0), \dot{l}^{s_1,q_1}(A_1), \{|g_{j,\gamma}|\} ).
\end{array}$$

Let $ \mathcal{P} = \{ \{g_{j,\gamma}\}_{(j,\gamma)\in \dot{\Lambda}}: g_{j,\gamma}\geq 0 \}$
denote that $\{g_{j,\gamma}\}_{(j,\gamma)\in \dot{\Lambda}}$ takes value in  $\mathbb{R}_{+}^{\dot{\Lambda}}$.
Hence
\begin{equation*}
\begin{array}{rl}
& K(t,f, \dot{l}^{s_0,q_0}(A_0), \dot{l}^{s_1,q_1}(A_1))\\
= &\inf \limits_{\{g_{j,\gamma}\}_{(j,\gamma)\in \dot{\Lambda}}\in  \mathcal{P} }
K (t,\{|f_{j,\gamma}| \}_{(j,\gamma)\in \dot{\Lambda}}, \dot{l}^{s_0,q_0}(A_0), \dot{l}^{s_1,q_1}(A_1), \{g_{j,\gamma}\}_{(j,\gamma)\in \dot{\Lambda}}).
\end{array}
\end{equation*}
The above equality implies that
the relative K functional is
transformed to the K functional defined by the infimum of sequence $\{|f_{j,\gamma}|\}_{(j,\gamma)\in \dot{\Lambda}}, \{g_{j,\gamma}\}_{(j,\gamma)\in \dot{\Lambda}}$
where $\{ |f_{j,\gamma}| \}_{(j,\gamma)\in \dot{\Lambda}} $ is defined on  $\dot{l}^{s_0,q_0}(A_0)+ \dot{l}^{s_1,q_1}(A_1)\in \mathbb{R}_{+}^{\dot{\Lambda}}$
and $\{g_{j,\gamma}\}_{(j,\gamma)\in \dot{\Lambda}} $ is defined on $\dot{l}^{s_0,q_0}(A_0)+ \dot{l}^{s_1,q_1}(A_1)\in \mathbb{R}_{+}^{\dot{\Lambda}}$.

(iv) For $\{g_{j,\gamma}\}_{(j,\gamma)\in \dot{\Lambda}} $ is defined on $\dot{l}^{s_0,q_0}(A_0)+ \dot{l}^{s_1,q_1}(A_1)\in \mathbb{R}^{\dot{\Lambda}}_{+}$,
we construct $\{g^*_{j,\gamma} \}_{(j,\gamma)\in \dot{\Lambda}}\in \mathfrak{C}^{\dot{\Lambda}}_{f}$
such that
\begin{equation}\label{eq:mathfrak}
\begin{array}{rl}
&K (t,\{|f_{j, \gamma}|\}, \dot{l}^{s_0,q_0}(A_0), \dot{l}^{s_1,q_1}(A_1), \{g_{j,\gamma}\} )\\
\geq & K (t,\{|f_{j, \gamma}|\}, \dot{l}^{s_0,q_0}(A_0), \dot{l}^{s_1,q_1}(A_1), \{g^*_{j,\gamma}\} ).
\end{array}
\end{equation}
To prove the above inequality, we consider three cases
for $\{g_{j,\gamma}\}_{(j,\gamma)\in \dot{\Lambda}}\in \mathcal{P} $.
(A). If $g_{j,\gamma}\geq 2|f_{j,\gamma}|$, then choose $g_{j,\gamma}^{*}=0$, we have
$$g_{j,\gamma}>g^*_{j,\gamma} \mbox { and } ||f_{j,\gamma}|- g_{j,\gamma}|\geq |f_{j,\gamma}|= |f_{j,\gamma}|- g^*_{j,\gamma}.$$
(B) If $|f_{j,\gamma}|< g_{j,\gamma}<2|f_{j,\gamma}|$, then there exists $g^{*}_{j,\gamma}$ satisfying that
(i) $0<g^{*}_{j,\gamma}< |f_{j,\gamma}|$
and  (ii) $g_{j,\gamma}+g^{*}_{j,\gamma}=2|f_{j,\gamma}|$. Hence
$$g_{j,\gamma}>g^*_{j,\gamma} \mbox { and }
||f_{j,\gamma}|- g_{j,\gamma}|=g_{j,\gamma}-|f_{j,\gamma}| =|f_{j,\gamma}|-g^{*}_{j,\gamma}.$$
(C) If $0< g_{j,\gamma}\leq |f_{j,\gamma}|$,
then choose $g_{j,\gamma}^{*}=g_{j,\gamma}.$

For $\{g_{j,\gamma}^{*}\}_{(j,\gamma)\in \dot{\Lambda}}$ constructed in the above three cases (A), (B) and (C),
we know that,
$\forall (j,\gamma)\in \dot{\Lambda}$, $g^{*}_{j,\gamma}\geq 0$ and $|f_{j,\gamma}|- g^{*}_{j,\gamma}\geq 0$.
Hence
$\{g^{*}_{j,\gamma}\}_{(j,\gamma)\in \dot{\Lambda}}\in  \mathfrak{C}^{\dot{\Lambda}}_{f}$.
Further,
$\{g_{j,\gamma}^{*}\}_{(j,\gamma)\in \dot{\Lambda}}$
satisfies \eqref{eq:mathfrak}.
Hence
$$\begin{array}{rl}
&\inf \limits_{\{g_{j,\gamma}\}_{(j,\gamma)\in \dot{\Lambda}}\in  \mathcal{P} } K (t,\{|f_{j, \gamma}|\}, \dot{l}^{s_0,q_0}(A_0), \dot{l}^{s_1,q_1}(A_1), \{g_{j,\gamma}\} )\\
= & \inf \limits_{\{g^{*}_{j,\gamma}\}_{(j,\gamma)\in \dot{\Lambda}}\in  \mathfrak{C}^{\dot{\Lambda}}_{f} } K (t,\{|f_{j, \gamma}|\}, \dot{l}^{s_0,q_0}(A_0), \dot{l}^{s_1,q_1}(A_1), \{g^*_{j,\gamma}\} ).
\end{array}$$
The above equality implies that
K functional defined by the infimum of sequence $\{f_{j,\gamma}\}_{(j,\gamma)\in \dot{\Lambda}}, \{g_{j,\gamma}\}_{(j,\gamma)\in \dot{\Lambda}}$
defined on $\dot{l}^{s_0,q_0}(A_0)+ \dot{l}^{s_1,q_1}(A_1)\in \mathbb{R}_{+}^{\dot{\Lambda}}$
can be transformed to the K functional defined by the infimum of sequence $\{|f_{j,\gamma}|\}_{(j,\gamma)\in \dot{\Lambda}}, \{g^{*}_{j,\gamma}\}_{(j,\gamma)\in \dot{\Lambda}}$
where $\{|f_{j,\gamma}|\}_{(j,\gamma)\in \dot{\Lambda}}$ is defined on $\dot{l}^{s_0,q_0}(A_0)+ \dot{l}^{s_1,q_1}(A_1)\in \mathbb{R}_{+}^{\dot{\Lambda}}$
and $\{g^{*}_{j,\gamma}\}_{(j,\gamma)\in \dot{\Lambda}}\in  \mathfrak{C}^{\dot{\Lambda}}_{f}$.
%and for $(j,\gamma)\in \dot{\Lambda}$, $g_{j,\gamma}\geq 0$ and $|f_{j,\gamma}|- g_{j,\gamma}\geq 0$.
%Hence we need consider $\{g_{j,\gamma}\}_{(j,\gamma)\in \dot{\Lambda}}$ only defined on the infinite cuboid $\mathfrak{C}^{\dot{\Lambda}}_{f}$.
%Using positive equivalence classes and

By the above four steps,
we have transformed the K functional to the infimum of positive sequence  on infinite cuboid
$ \mathfrak{C}^{\dot{\Lambda}}_{f}$.
Hence we get \eqref{eq:cuboid}.

\end{proof}

The following theorem is a corollary of Theorem \ref{th:5.3}
and implies that wavelet basis are unconditional basis for relative real interpolation spaces.
\begin{theorem}\label{th:absolute}
Given $s_0,s_1\in \mathbb{R}, 0<p_0,p_1,q_0,q_1\leq \infty, 0<\xi<\infty$.
If $|f_{j,\gamma}| \leq |g_{j,\gamma}|, \forall (j,\gamma)\in \Lambda$, then
$$K (t,f, \dot{l}^{s_0,q_0}(A_0), \dot{l}^{s_1,q_1}(A_1))\leq K (t,g, \dot{l}^{s_0,q_0}(A_0), \dot{l}^{s_1,q_1}(A_1)).$$
\end{theorem}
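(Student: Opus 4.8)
The plan is to reduce Theorem~\ref{th:absolute} to Theorem~\ref{th:5.3} by exploiting the fact that the cuboid functional $K_{\mathfrak{C}}$ only "sees" the absolute values of the wavelet coefficients, and that the infinite cuboids attached to $f$ and $g$ are nested whenever $|f_{j,\gamma}|\leq|g_{j,\gamma}|$ for all $(j,\gamma)\in\dot\Lambda$. Concretely, from the hypothesis we have the set inclusion
$$\mathfrak{C}^{\dot{\Lambda}}_{f}= \{ \{h_{j,\gamma}\}: 0\leq h_{j,\gamma}\leq |f_{j,\gamma}|\}
\subset \{ \{h_{j,\gamma}\}: 0\leq h_{j,\gamma}\leq |g_{j,\gamma}|\}= \mathfrak{C}^{\dot{\Lambda}}_{g}.$$
So the first step is to record this inclusion and to observe that, by Theorem~\ref{th:5.3}, both sides of the claimed inequality can be rewritten as cuboid functionals:
$K(t,f,\dot{l}^{s_0,q_0}(A_0),\dot{l}^{s_1,q_1}(A_1))=K_{\mathfrak{C}}(t,f,\dot{l}^{s_0,q_0}(A_0),\dot{l}^{s_1,q_1}(A_1))$ and similarly for $g$.

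The second step is the comparison of the two infima. Here one must be slightly careful: $K_{\mathfrak{C}}(t,g,\cdots)$ is defined as $\inf$ over $\{h_{j,\gamma}\}\in\mathfrak{C}^{\dot\Lambda}_{g}$ of $K(t,g,\cdots,\tilde h)$, where $\tilde h_{j,\gamma}=h_{j,\gamma}(Bg)_{j,\gamma}$, and the two terms of this quantity are $[\sum_j 2^{jq_0s_0}\|\{h_{j,\gamma}\}_{\gamma}\|_{A_0}^{q_0}]^{1/q_0}$ and $t[\sum_j 2^{jq_1s_1}\|\{h_{j,\gamma}-|g_{j,\gamma}|\}_{\gamma}\|_{A_1}^{q_1}]^{1/q_1}$ after the reduction in step~(i) of the proof of Theorem~\ref{th:5.3}. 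The point is that this expression depends on $g$ only through $\{|g_{j,\gamma}|\}$. So I would restrict the infimum defining $K_{\mathfrak C}(t,g,\cdots)$ to those admissible sequences $\{h_{j,\gamma}\}$ that in addition satisfy $h_{j,\gamma}\leq |f_{j,\gamma}|$ for all $(j,\gamma)$, i.e.\ to $\mathfrak{C}^{\dot\Lambda}_{f}\subset\mathfrak{C}^{\dot\Lambda}_{g}$; such a sub-infimum can only be larger. But there is a subtlety: when $h_{j,\gamma}\leq|f_{j,\gamma}|$, the perturbation term $\|\{h_{j,\gamma}-|g_{j,\gamma}|\}_{\gamma}\|_{A_1}$ still references $|g_{j,\gamma}|$, not $|f_{j,\gamma}|$, so this restricted quantity is \emph{not} literally $K_{\mathfrak C}(t,f,\cdots)$. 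The cleanest route is instead to go back to the four-step construction in the proof of Theorem~\ref{th:5.3} applied to $g$: any $\{h_{j,\gamma}\}\in\mathcal P$ (nonnegative sequences) is replaced by $\{h^*_{j,\gamma}\}\in\mathfrak C^{\dot\Lambda}_{g}$ with a smaller value, and I want to argue further that the optimal choice already forces $h^*_{j,\gamma}\leq|f_{j,\gamma}|$ would be false; so the correct formulation is to prove directly the monotonicity statement: if $0\leq a\leq b$ then $\inf_{0\leq c\leq a}\big(\|\cdots c\cdots\|_{A_0}+t\|\cdots(c-a)\cdots\|_{A_1}\big)\leq \inf_{0\leq c\leq b}\big(\|\cdots c\cdots\|_{A_0}+t\|\cdots(c-b)\cdots\|_{A_1}\big)$ coefficientwise, which follows because $c\mapsto$ (the larger of $c$ and $a-c$ versus that of $c$ and $b-c$) — more precisely because for fixed $c\in[0,a]$ the split $(c, b-c)$ of $b$ satisfies $b-c\geq a-c$, hence every competitor in the $f$-infimum lifts to a competitor in the $g$-infimum with at least as large an $A_1$-perturbation, so in fact the inequality goes the wrong way this way too. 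Therefore the genuinely correct comparison is: given a near-optimal split $g=g_0+g_1$ for $g$ in the \emph{original} (non-cuboid) $K$-functional, truncate each coefficient, $h_{j,\gamma}:=\min(|f_{j,\gamma}|,\,|(g_0)_{j,\gamma}|)$, and check that the resulting $\tilde h$ gives an admissible split $f=\tilde h+(f-\tilde h)$ with $\|\tilde h\|_{A_0}\leq\|g_0\|_{A_0}$ (by Theorem~\ref{th:absolute} applied at a single frequency level, i.e.\ by \eqref{eq:absolute} together with coefficientwise domination — this is exactly the unconditionality on the ring) and $\|f-\tilde h\|_{A_1}\leq\|g_1\|_{A_1}$ for the same reason since $|f_{j,\gamma}-h_{j,\gamma}|\leq|g_{j,\gamma}-(g_0)_{j,\gamma}|=|(g_1)_{j,\gamma}|$.

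So the actual key step is a one-line-per-coordinate inequality: for $0\le h\le f'\le g'$ and $0\le f'\le|f_{j,\gamma}|$, $0\le g'\le|g_{j,\gamma}|$ we want $|f_{j,\gamma}-h|\le|g_{j,\gamma}-h'|$ for the optimal transported split — and the clean way to get this is precisely the truncation $h_{j,\gamma}=\min(|f_{j,\gamma}|,(g_0)_{j,\gamma})$ described above, combined with the \emph{frequency-wise} unconditionality condition \eqref{eq:absolute}: because $A_0,A_1$ satisfy \eqref{eq:absolute}, monotone coefficientwise domination on each layer grid $\dot\Gamma_j$ implies domination of the $A_0$- and $A_1$-norms, and then of the weighted $\dot l^{s,q}$-sums. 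Thus the third and final step is: take any decomposition $g=g_0+g_1$, set $\tilde h$ by the coordinatewise truncation, obtain $f=\tilde h+(f-\tilde h)$, bound each side using \eqref{eq:absolute} at every $j$, and pass to the infimum over all decompositions of $g$ to conclude $K(t,f,\cdots)\le K(t,g,\cdots)$. Alternatively — and more in the spirit of the already-proven Theorem~\ref{th:5.3} — one simply observes that $K_{\mathfrak C}(t,f,\cdots)$ is an infimum over the smaller cuboid $\mathfrak C^{\dot\Lambda}_f\subset\mathfrak C^{\dot\Lambda}_g$ of a functional that, after the reduction in step~(i) of the proof of Theorem~\ref{th:5.3}, can be written so that the $A_1$-term for $f$ dominates the corresponding $A_1$-term for $g$ pointwise on the cuboid, which would give the reverse inequality, so one must use the truncation argument rather than a bare set inclusion.

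\textbf{Main obstacle.} The only real difficulty is the one flagged above: a naive "smaller cuboid $\Rightarrow$ larger infimum" argument compares the wrong quantities, because the $A_1$-perturbation term in $K_{\mathfrak C}(t,g,\cdots)$ is measured against $|g_{j,\gamma}|$ while that in $K_{\mathfrak C}(t,f,\cdots)$ is measured against $|f_{j,\gamma}|$. The fix — truncating a near-optimal decomposition of $g$ coordinatewise and invoking the frequency-wise unconditionality \eqref{eq:absolute} to dominate both the $A_0$- and $A_1$-norms layer by layer — is routine once set up correctly, but it is the step that needs to be written with care. Everything else is bookkeeping over the grids $\dot\Lambda$ and $\dot\Gamma_j$ and an application of Theorem~\ref{th:5.3}.
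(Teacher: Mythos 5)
Your final argument is correct, and it supplies a step that the paper itself leaves entirely implicit: the paper presents Theorem~\ref{th:absolute} as an immediate corollary of Theorem~\ref{th:5.3} with no written proof, whereas you correctly diagnose that the bare set inclusion $\mathfrak{C}^{\dot{\Lambda}}_{f}\subset\mathfrak{C}^{\dot{\Lambda}}_{g}$ does not close the argument, because the $A_1$-perturbation term in $K_{\mathfrak{C}}(t,g,\cdot)$ is measured against $|g_{j,\gamma}|$ while that in $K_{\mathfrak{C}}(t,f,\cdot)$ is measured against $|f_{j,\gamma}|$. Your fix --- take a near-optimal decomposition $g=g_0+g_1$, truncate coordinatewise via $h_{j,\gamma}=\min(|f_{j,\gamma}|,|(g_0)_{j,\gamma}|)$, verify $|h_{j,\gamma}|\le|(g_0)_{j,\gamma}|$ and $\bigl||f_{j,\gamma}|-h_{j,\gamma}\bigr|\le|(g_1)_{j,\gamma}|$ coefficientwise, and then invoke the layer-wise unconditionality \eqref{eq:absolute} to dominate the $A_0$- and $A_1$-norms on each $\dot{\Gamma}_j$ before summing the weighted $\dot l^{s,q}$-expressions --- is exactly the right mechanism, and it is what the paper's ``corollary'' must secretly rely on. Two remarks. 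First, since \eqref{eq:absolute} is only an equivalence of norms, this yields $K(t,f,\cdot)\lesssim K(t,g,\cdot)$ rather than the stated $\le$; the paper is loose about this distinction throughout, so this is not a defect of your argument specifically, but it should be flagged. Second, the write-up would benefit greatly from deleting the two retracted false starts (the naive cuboid inclusion and the ``restrict the infimum'' attempt): as written, the reader has to parse two arguments you yourself declare to ``go the wrong way'' before reaching the correct one, and the sentence beginning ``I want to argue further that the optimal choice already forces $h^*_{j,\gamma}\leq|f_{j,\gamma}|$ would be false'' is not grammatical mathematics. The surviving three-line proof (truncate, compare coefficientwise, apply \eqref{eq:absolute} layer by layer, take infima) is all that is needed.
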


The above Theorem \ref{th:absolute} tells us the K functional is determined by the absolute value of the wavelet coefficients.
Wavelet basis are unconditional basis for real interpolation spaces $(\dot{l}^{s_0,q_0}(A_0), \dot{l}^{s_1,q_1}(A_1))_{\theta,r}$.
Hence denote $\tilde{f}$ the function whose wavelet coefficients
are equal to the absolute value of the wavelet coefficients of $f$ and
$\tilde{f}_{j,\gamma}=|f_{j,\gamma}|$.
According to the above Theorems \ref{th:5.3} and \ref{th:absolute},
we only need to compute discrete K functionals with positive wavelet coefficients.
So let's assume that all the wavelet coefficients are positive.

\subsection{Vertex functionals} \label{SEC6}
We use quasi trigonometric inequalities to consider the cuboid functional
and convert functional calculation into vertex classification.
Given $s_0,s_1\in \mathbb{R}, 0<q_0,q_1\leq \infty$.
Note that, by wavelet characterization,
\begin{equation*}
\begin{array}{rl}
&K (t,f, \dot{l}^{s_0,q_0}(A_0), \dot{l}^{s_1,q_1}(A_1), f_0)\\
=& [\sum\limits_{j\in \mathbb{Z}} 2^{j q_0 s_0}
\|\{ f^{0}_{j,\gamma}- f_{j,\gamma}\}_{\gamma\in \dot{\Gamma}}\|_{A_0} ^{q_0} ] ^{\frac{1}{q_0}}
+ t [\sum\limits_{j\in \mathbb{Z}} 2^{j q_1 s_1}
\|\{f^{0}_{j,\gamma}\}_{\gamma\in \dot{\Gamma}}\|_{A_1} ^{q_1}) ] ^{\frac{1}{q_1}}.
\end{array}
\end{equation*}
By Theorem \ref{eq:cuboid}, we have
$$K(t,f, \dot{l}^{s_0,q_0}(A_0), \dot{l}^{s_1,q_1}(A_1))= K_{\mathfrak{C}}(t,\tilde{f}, \dot{l}^{s_0,q_0}(A_0), \dot{l}^{s_1,q_1}(A_1)).$$

Let $ V^{\dot{\Lambda}}_{f} = \{(g_{j,\gamma})_{(j,\gamma)\in \dot{\Lambda}}: g_{j,\gamma}=0 {\, \rm or \, } |f_{j,\gamma}|\}$
be the vertex point set of the cuboid
$ \mathfrak{C}^{\dot{\Lambda}}_{f}= \{(g_{j,\gamma})_{(j,\gamma)\in \dot{\Lambda}}: 0\leq g_{j,\gamma}\leq |f_{j,\gamma}|\}$.
We convert the $K_{\mathfrak{C}}$ functional on the cuboid $ \mathfrak{C}^{\dot{\Lambda}}_{f}$ to the vertex case on $ V^{\dot{\Lambda}}_{f}$.
For $\tilde{f}= \{|f_{j,\gamma}|\}_{ (j,\gamma)\in \dot{\Lambda} }$, denote
\small{
$$\begin{array}{cl}
&K_{V} (t,f, \dot{l}^{s_0,q_0}(A_0), \dot{l}^{s_1,q_1}(A_1))\\
= & \inf \limits_{\{f^{0}_{j,\gamma}\}_{(j,\gamma)\in \dot{\Lambda}}\in V^{\dot{\Lambda}}_{f} } K_{\mathfrak{C}}(t,\tilde{f}, \dot{l}^{s_0,q_0}(A_0), \dot{l}^{s_1,q_1}(A_1), f_0).
\end{array}$$}
The vertex functionals $K_{V}$ is equivalent to cuboid functionals $K_{\mathfrak{C}}$.
\begin{theorem}\label{th:vertex}
Given $s_0,s_1\in \mathbb{R}, 0<q_0,q_1\leq \infty$. We have
$$K _{\mathfrak{C}} (t,f, \dot{l}^{s_0,q_0}(A_0), \dot{l}^{s_1,q_1}(A_1)) \sim  K_{V} (t,f, \dot{l}^{s_0,q_0}(A_0), \dot{l}^{s_1,q_1}(A_1)).$$
That is to say,\\
(i) $K _{\mathfrak{C}} (t,f, \dot{l}^{s_0,q_0}(A_0), \dot{l}^{s_1,q_1}(A_1)) \leq K_{V} (t,f, \dot{l}^{s_0,q_0}(A_0), \dot{l}^{s_1,q_1}(A_1))$\\
(ii) $K_{V} (t,f, \dot{l}^{s_0,q_0}(A_0), \dot{l}^{s_1,q_1}(A_1)) \lesssim K_{\mathfrak{C}} (t,f, \dot{l}^{s_0,q_0}(A_0), \dot{l}^{s_1,q_1}(A_1))$.
\end{theorem}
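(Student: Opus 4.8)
The plan is to establish the two inequalities listed in Theorem~\ref{th:vertex} separately. Inequality (i) is immediate: the vertex set $V^{\dot{\Lambda}}_{f}$ is contained in the cuboid $\mathfrak{C}^{\dot{\Lambda}}_{f}$, so the infimum defining $K_{\mathfrak{C}}$ is taken over a larger family of decompositions than the one defining $K_{V}$; hence $K_{\mathfrak{C}}(t,f,\dot{l}^{s_0,q_0}(A_0),\dot{l}^{s_1,q_1}(A_1))\le K_{V}(t,f,\dot{l}^{s_0,q_0}(A_0),\dot{l}^{s_1,q_1}(A_1))$, with no quasi-norm subtleties involved.

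The content of the theorem is inequality (ii), which I would prove by a ``bang--bang'' rounding of an arbitrary point of the cuboid onto a vertex at the cost of a universal constant. By Theorems~\ref{th:5.3} and~\ref{th:absolute} we may assume all wavelet coefficients are nonnegative, and for a decomposition $g=\{g_{j,\gamma}\}_{(j,\gamma)\in\dot{\Lambda}}\in\mathfrak{C}^{\dot{\Lambda}}_{f}$ (so $0\le g_{j,\gamma}\le|f_{j,\gamma}|$) we record its defining expression as
$$\Phi(g)=\Big[\sum_{j\in\mathbb{Z}}2^{jq_0s_0}\|\{g_{j,\gamma}\}_{\gamma\in\dot{\Gamma}}\|_{A_0}^{q_0}\Big]^{\frac{1}{q_0}}+t\Big[\sum_{j\in\mathbb{Z}}2^{jq_1s_1}\|\{|f_{j,\gamma}|-g_{j,\gamma}\}_{\gamma\in\dot{\Gamma}}\|_{A_1}^{q_1}\Big]^{\frac{1}{q_1}}$$
(with the usual modification to suprema when $q_0$ or $q_1$ is $\infty$), so that $K_{\mathfrak{C}}=\inf_{g\in\mathfrak{C}^{\dot{\Lambda}}_{f}}\Phi(g)$ and $K_{V}=\inf_{g\in V^{\dot{\Lambda}}_{f}}\Phi(g)$. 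Given $g\in\mathfrak{C}^{\dot{\Lambda}}_{f}$, define $g^{*}\in V^{\dot{\Lambda}}_{f}$ by the threshold rule $g^{*}_{j,\gamma}=|f_{j,\gamma}|$ when $g_{j,\gamma}\ge\tfrac12|f_{j,\gamma}|$ and $g^{*}_{j,\gamma}=0$ otherwise. A short case check gives, for every $(j,\gamma)\in\dot{\Lambda}$, both $g^{*}_{j,\gamma}\le 2g_{j,\gamma}$ and $|f_{j,\gamma}|-g^{*}_{j,\gamma}\le 2(|f_{j,\gamma}|-g_{j,\gamma})$.

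The next step is to pass from this coordinatewise domination to a domination of $\Phi$. Using the homogeneity of the quasi-norms $\|\cdot\|_{A_i}$ together with their solidity on each ring $\dot{\Gamma}_{j}$ --- that is, monotonicity with respect to the absolute values of the coefficients, which is what condition~\eqref{eq:absolute} and Theorem~\ref{th:absolute} encode --- one gets $\|\{g^{*}_{j,\gamma}\}_{\gamma}\|_{A_0}\le 2\|\{g_{j,\gamma}\}_{\gamma}\|_{A_0}$ and $\|\{|f_{j,\gamma}|-g^{*}_{j,\gamma}\}_{\gamma}\|_{A_1}\le 2\|\{|f_{j,\gamma}|-g_{j,\gamma}\}_{\gamma}\|_{A_1}$ for each $j$. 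Since the outer $\ell^{q_i}$-sums are monotone in the sequence of ring-norms, this yields $\Phi(g^{*})\le 2\Phi(g)$; observe that no triangle inequality enters --- only homogeneity and monotonicity --- so the estimate is insensitive to $q_i<1$ and to $A_i$ being merely pseudo-Banach. Taking the infimum over $g\in\mathfrak{C}^{\dot{\Lambda}}_{f}$ and using $g^{*}\in V^{\dot{\Lambda}}_{f}$ gives $K_{V}\le 2K_{\mathfrak{C}}$, which is (ii); combined with (i) this is the asserted equivalence.

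The step that genuinely needs care is the solidity of $\|\cdot\|_{A_i}$ on a single ring: \eqref{eq:absolute} is stated only as the equivalence $\|\{f_{j,\gamma}\}_{\gamma}\|_{A}\sim\|\{|f_{j,\gamma}|\}_{\gamma}\|_{A}$, so one must either observe that this, together with the lattice structure of $A(\dot{\Gamma}_{j})$, forces $\|\{a_{j,\gamma}\}_{\gamma}\|_{A}\lesssim\|\{b_{j,\gamma}\}_{\gamma}\|_{A}$ whenever $|a_{j,\gamma}|\le|b_{j,\gamma}|$, or else derive this monotonicity from Theorems~\ref{th:5.3} and~\ref{th:absolute} applied to functions whose wavelet support lies on one ring. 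Any loss of constant there is harmless, since Theorem~\ref{th:vertex} only claims an equivalence. Everything else is routine bookkeeping.
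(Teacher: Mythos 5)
Your proposal is correct and follows essentially the same route as the paper: part (i) by inclusion of the vertex set in the cuboid, and part (ii) by rounding each coordinate to a vertex via the threshold $g_{j,\gamma}\gtrless\tfrac12|f_{j,\gamma}|$ and then invoking homogeneity and solidity of the ring norms $\|\cdot\|_{A_i(\dot{\Gamma}_j)}$. Your explicit coordinatewise bounds $g^{*}_{j,\gamma}\le 2g_{j,\gamma}$ and $|f_{j,\gamma}|-g^{*}_{j,\gamma}\le 2(|f_{j,\gamma}|-g_{j,\gamma})$, and your remark that condition \eqref{eq:absolute} must be upgraded to genuine monotonicity of the ring norms, make the same argument slightly more careful than the paper's, but it is not a different proof.
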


\begin{proof}
(1) $K _{\mathfrak{C}} (t,f, \dot{l}^{s_0,q_0}(A_0), \dot{l}^{s_1,q_1}(A_1))$ takes infimum for all possible $f_0$ in the cuboid,
but $K _{V} (t,f, \dot{l}^{s_0,q_0}(A_0), \dot{l}^{s_1,q_1}(A_1))$ takes infimum for all possible $f_0$ in the vertex of the cuboid.
Hence (i) is true
$$K _{\mathfrak{C}}(t,f, \dot{l}^{s_0,q_0}(A_0), \dot{l}^{s_1,q_1}(A_1)) \leq K _{V} (t,f, \dot{l}^{s_0,q_0}(A_0),  \linebreak \dot{l}^{s_1,q_1}(A_1)).$$

(2) To prove (ii), according to Theorem \ref{th:5.3}, we need only consider $\tilde{f}= \{|f_{j,\gamma}|\}_{ (j,\gamma)\in \dot{\Lambda} }$,
and $g=\{g_{j,\gamma}\}_{ (j,\gamma)\in \dot{\Lambda} }\in \mathfrak{C}^{\dot{\Lambda}}_{\tilde{f}}$.
In fact, we find
$g^{*}\in V^{\dot{\Lambda}}_{f}$ and prove that the following inequality is true:
\begin{equation}\label{eq:CVG}
K (t,\tilde{f}, \dot{l}^{s_0,q_0}(A_0), \dot{l}^{s_1,q_1}(A_1), g^{*}) \lesssim
K (t,\tilde{f}, \dot{l}^{s_0,q_0}(A_0), \dot{l}^{s_1,q_1}(A_1), g).
\end{equation}

But for all $\tilde{f}_{j,\gamma}= |f_{j,\gamma}|>0$ and $g\in \mathfrak{C}^{\dot{\Lambda}}_{\tilde{f}}$, we must have
$$g_{j,\gamma}\leq \frac{1}{2} |f_{j,\gamma}| {\mbox\,\, or \,\,} g_{j,\gamma}> \frac{1}{2} |f_{j,\gamma}|.$$
For the first case, we denote $(j,\gamma)\in \Gamma_{1,j}$ and take $g^{*}_{j,\gamma}=0$.
For the second case, we denote $(j,\gamma)\in \Gamma_{2,j}$ and take $g^{*}_{j,\gamma}=|f_{j,\gamma}|$.
We get $\{\gamma\in \dot{\Gamma}_{j}, |f_{j,\gamma}|\neq 0\}= \Gamma_{1,j} \bigcup \Gamma_{2,j}$ and
$g^{*}= \{g^{*}_{j,\gamma}\}_{ (j,\gamma)\in \dot{\Lambda} } $. Denote
$$\begin{array}{c}
 K (t,\tilde{f}, \dot{l}^{s_0,q_0}(A_0), \dot{l}^{s_1,q_1}(A_1), g^{*})\\
\equiv :
[\sum\limits_{j\in \mathbb{Z}} 2^{j q_0 s_0}
\|\{|f_{j,\gamma}|\}_{\gamma\in \Gamma_{1,j}}\|_{A_0} ^{q_0} ] ^{\frac{1}{q_0}}
+ t [\sum\limits_{j\in \mathbb{Z}} 2^{j q_1 s_1}
\|\{g^{*}_{j,\gamma}\}_{\gamma\in \Gamma_{2,j}}\|_{A_1} ^{q_1} ] ^{\frac{1}{q_1}}.
\end{array}$$

Further,
$$\|\{|f_{j,\gamma}|\}_{\gamma\in \Gamma_{1,j}}\|_{A_0}
\lesssim \|\{|f_{j,\gamma}|-g_{j,\gamma}\}_{\gamma\in \Gamma_{1,j}}\|_{A_0},$$
$$\|\{g_{j,\gamma}\}_{\gamma\in \Gamma_{2,j}}\|_{A_1}
\lesssim \|\{g_{j,\gamma}\}_{\gamma\in \Gamma_{2,j}}\|_{A_1} .$$
Hence we have
$$\begin{array}{l}  K (t, \tilde{f}, \dot{l}^{s_0,q_0}(A_0), \dot{l}^{s_1,q_1}(A_1), g^{*}) \\
\lesssim  [\sum\limits_{j\in \mathbb{Z}} 2^{j q_0 s_0}
\|\{ (f_{j,\gamma}-g^{*}_{j,\gamma})\}_{\gamma\in \Gamma_{1,j}}\|_{A_0}^{q_0} ] ^{\frac{1}{q_0}}
+  t [\sum\limits_{j\in \mathbb{Z}} 2^{j q_1 s_1}
\|\{g^{*}_{j,\gamma}\}_{\gamma\in \Gamma_{2,j}}\|_{A_1}^{q_1} ] ^{\frac{1}{q_1}}\\
\lesssim  [\sum\limits_{j\in \mathbb{Z}} 2^{j q_0 s_0}
\|\{ (f_{j,\gamma}-g_{j,\gamma})\}_{\gamma\in \dot{\Gamma}}\|_{A_0}^{q_0} ] ^{\frac{1}{q_0}}
+  t [\sum\limits_{j\in \mathbb{Z}} 2^{j q_1 s_1 }
\|\{g_{j,\gamma}\}_{\gamma\in \dot{\Gamma}}\|_{A_1}^{q_1} ] ^{\frac{1}{q_1}}\\
\lesssim   K (t,\tilde{f}, \dot{l}^{s_0,q_0}(A_0), \dot{l}^{s_1,q_1}(A_1), g).
\end{array}$$
Hence the Equation \eqref{eq:CVG} is true.
\end{proof}

Theorem \ref{th:vertex} allows us to classify the index set $\dot{\Lambda}$
and provides a possibility to consider K functional by vertexes classification.
The calculation of K functional is turned into the study of the topology of lattice points.
Next we will study the relationship of topological structures on three types of lattice
based on vertex K functional:
full grid $\dot{\Lambda}$, main grid $\mathbb{Z}$ and layer grid $\dot{\Gamma}_{j}$.

\section{K functional for $A_0=A_1$ or $q_0=q_1$}\label{SEC7}
In this section, we study the structure of lattice topology for $A_0=A_1$ or $q_0=q_1$.
We assume that $A_0$ and $A_1$ both satisfy the absolute value property \eqref{eq:absolute}.

\subsection{K functional for $A_0=A_1$} \label{SEC:5.1}
The topology of the functions on the layer grid $\dot{\Gamma}_{j}$ is the same.
So we only need to consider the functional of
discrete weighted Lebesgue space $\dot{l}^{s,q}(\mathbb{Z})$  defined on the main grid $\mathbb{Z}$.
\begin{definition}
Given $s\in \mathbb{R}, 0<q\leq \infty$.
Denote $F=(F_{j})_{j\in \mathbb{Z}} \in \dot{l}^{s,q}(\mathbb{Z}) = \dot{l}^{s,q}$, if
$$  [\sum\limits_{j\in \mathbb{Z}} 2^{j s_0 q_0} F_{j}^{q_0} ]^{\frac{1}{q_0}}<\infty.$$
\end{definition}

{\bf Firstly, let's briefly mention K-functionals on $\mathbb{Z}$.}
For $F=(F_{j})_{j\in \mathbb{Z}}$, denote
$$\begin{array}{c}
K(t,F, \dot{l}^{s_0,q_0}(\mathbb{Z}), \dot{l}^{s_1,q_1}(\mathbb{Z}), x) =K(t,F, \dot{l}^{s_0,q_0}, \dot{l}^{s_1,q_1}, x)\\
=  [\sum\limits_{j\in \mathbb{Z}} 2^{j s_0 q_0} x_{j}^{q_0} ]^{\frac{1}{q_0}}
+ t [\sum\limits_{j\in\mathbb{Z}} 2^{js_1 q_1} (F_{j}-x_{j})^{q_1} ] ^{\frac{1}{q_1}}.\\
K(t,F, \dot{l}^{s_0,q_0}, \dot{l}^{s_1,q_1})= \inf\limits_{x\in \dot{l}^{s_0,q_0}} K(t,F, \dot{l}^{s_0,q_0}, \dot{l}^{s_1,q_1}, x).
\end{array}$$

The following Remark tells us that the quantity $K(t,F, \dot{l}^{s_0,q_0}, \dot{l}^{s_1,q_1})$ is known.
\begin{remark} \label{re:lsq}
According to two cases (i) $q_0\neq q_1$ and (ii) $q_0=q_1$ and $s_0\neq s_1$,
Yang-Yang-Zou-He \cite{YYZH} has obtained the expression of the K functional $K(t,F, \dot{l}^{s_0,q_0}, \dot{l}^{s_1,q_1})$.
In fact, similar to the discretization skills for Lebesgue spaces $(L^{q_0}, L^{q_1})$ in Hunt's work \cite{Hunt}
or similar to the skills for the Besov spaces $(B^{s_0,q_0}_{p}, B^{s_1,q_1}_{p})$ in Peetre's book \cite{Peetre},
we can also get easily the concrete expression of $K(t,F, \dot{l}^{s_0,q_0}, \dot{l}^{s_1,q_1})$.
\end{remark}

Further, we introduce $K_{\mathfrak{C} }$ functional and $K_{V}$ functional
$$\begin{array}{l}K_{\mathfrak{C} }(t,F, \dot{l}^{s_0,q_0}, \dot{l}^{s_1,q_1})=\inf\limits_{0\leq x_{j}\leq F_{j}} K(t,F, \dot{l}^{s_0,q_0}, \dot{l}^{s_1,q_1}, x)\\
K_{V}(t,F, \dot{l}^{s_0,q_0}, \dot{l}^{s_1,q_1})= \inf\limits_{ x_{j}=0 \mbox {\, or\, } F_{j}} K(t,F, \dot{l}^{s_0,q_0}, \dot{l}^{s_1,q_1}, x).
\end{array}$$

Similar to what we did in the Section \ref{SEC5}, we have
\begin{lemma}\label{lem:555}
$$\begin{array}{l}K(t,F, \dot{l}^{s_0,q_0}, \dot{l}^{s_1,q_1})=K_{\mathfrak{C} }(t,F, \dot{l}^{s_0,q_0}, \dot{l}^{s_1,q_1})
\sim  K_{V}(t,F, \dot{l}^{s_0,q_0}, \dot{l}^{s_1,q_1}).
\end{array}$$
\end{lemma}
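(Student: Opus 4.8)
The plan is to reduce Lemma \ref{lem:555} to the constructions already carried out in Section \ref{SEC5}, observing that the weighted sequence space $\dot{l}^{s,q}(\mathbb{Z})$ is just the special case $A_i = \mathbb{Z}$ (scalars) of the Besov hierarchical space $\dot{l}^{s_i,q_i}(A_i)$, with each layer grid $\dot{\Gamma}_j$ collapsed to a single point. Indeed, the defining quantities $K(t,F,\dot{l}^{s_0,q_0},\dot{l}^{s_1,q_1},x)$ and their infima over $x$, over the box $0\le x_j\le F_j$, and over the vertices $x_j\in\{0,F_j\}$ are literal transcriptions of the quantities in Theorems \ref{th:5.3} and \ref{th:vertex} in the case where the inner $A_i$-norm is the absolute value on $\mathbb{C}$ and the sum over $\gamma\in\dot{\Gamma}_j$ is trivial. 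So the strategy is to replay those two proofs verbatim in this simpler setting.

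First I would establish $K(t,F,\dot{l}^{s_0,q_0},\dot{l}^{s_1,q_1}) = K_{\mathfrak{C}}(t,F,\dot{l}^{s_0,q_0},\dot{l}^{s_1,q_1})$ by the four-step reduction in the proof of Theorem \ref{th:5.3}, now with real nonnegative $F_j$ in place of $|f_{j,\gamma}|$. Step (i)--(ii): since the $F_j$ are already nonnegative reals, we may restrict the competitors $x_j$ to be real, because replacing $x_j$ by $\operatorname{Re} x_j$ never increases $|F_j - x_j|$ and never increases $|x_j|$, hence never increases either of the two weighted $l^{q_i}$ sums defining $K(t,F,\cdot,\cdot,x)$; likewise replacing $x_j$ by $|x_j|$ only decreases things when $x_j<0$. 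Step (iii): given a nonnegative competitor $x=(x_j)$, define $x_j^\ast$ by the trichotomy used there — $x_j^\ast=0$ if $x_j\ge 2F_j$; $x_j^\ast$ with $x_j+x_j^\ast=2F_j$ (so $0<x_j^\ast<F_j$) if $F_j<x_j<2F_j$; and $x_j^\ast=x_j$ if $0\le x_j\le F_j$. In every case $0\le x_j^\ast\le F_j$, $x_j^\ast\le x_j$, and $|F_j-x_j^\ast|\le|F_j-x_j|$ coordinatewise, so monotonicity of the weighted $l^{q_0}$ and $l^{q_1}$ norms gives $K(t,F,\cdot,\cdot,x^\ast)\le K(t,F,\cdot,\cdot,x)$. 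Taking infima yields the equality with $K_{\mathfrak{C}}$.

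Next I would prove $K_{\mathfrak{C}} \sim K_V$ by replaying the proof of Theorem \ref{th:vertex}. The inequality $K_{\mathfrak{C}}\le K_V$ is immediate since the vertices form a subset of the box. For the reverse $K_V\lesssim K_{\mathfrak{C}}$, given a box competitor $x=(x_j)$ with $0\le x_j\le F_j$, split $\mathbb{Z}$ into $J_1=\{j: x_j\le \tfrac12 F_j\}$ and $J_2=\{j: x_j>\tfrac12 F_j\}$, and set $x_j^\ast=0$ on $J_1$, $x_j^\ast=F_j$ on $J_2$. Then on $J_1$ one has $F_j - x_j^\ast = F_j \le 2(F_j - x_j)$ and $x_j^\ast=0\le x_j$, while on $J_2$ one has $x_j^\ast=F_j\le 2x_j$ and $F_j-x_j^\ast=0\le F_j-x_j$; hence $\|(F_j-x_j^\ast)_{j\in\mathbb{Z}}\|_{\dot{l}^{s_0,q_0}}\lesssim \|(F_j-x_j)_{j\in\mathbb{Z}}\|_{\dot{l}^{s_0,q_0}}$ and similarly for the $\dot{l}^{s_1,q_1}$ term, the implied constant depending only on $q_0,q_1$. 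Adding the two estimates gives $K(t,F,\cdot,\cdot,x^\ast)\lesssim K(t,F,\cdot,\cdot,x)$, and taking the infimum over box competitors on the right and over vertex competitors (a fortiori) on the left yields $K_V\lesssim K_{\mathfrak{C}}$. Combining the two parts proves the chain of identities and equivalences claimed in Lemma \ref{lem:555}.

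The routine points are the coordinatewise monotonicity arguments and the factor-$2$ comparisons, which are identical to Section \ref{SEC5}; the only thing to be careful about is that here the ``inner norm'' is trivial, so there is genuinely nothing to prove at the layer-grid level and the reduction from complex to nonnegative-real competitors is even simpler than in Theorem \ref{th:5.3}. I do not expect a real obstacle: the main (minor) subtlety is bookkeeping the $q_i=\infty$ cases, where the weighted $l^{q_i}$ sum is a weighted supremum, but all the coordinatewise inequalities above survive the passage to suprema without change.
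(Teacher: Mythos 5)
Your proposal is correct and follows exactly the route the paper intends: the paper proves Lemma \ref{lem:555} only by the remark ``Similar to what we did in Section \ref{SEC5}'', and your argument is precisely that replay of Theorems \ref{th:5.3} and \ref{th:vertex} in the scalar, single-point-layer setting (complex $\to$ real $\to$ nonnegative $\to$ box for the equality with $K_{\mathfrak{C}}$, then the $\tfrac12 F_j$ threshold split for $K_{\mathfrak{C}}\sim K_V$). The only blemish is a label swap in one sentence (the $F_j-x_j^{\ast}$ comparison belongs to the $\dot{l}^{s_1,q_1}$ term and the $x_j^{\ast}$ comparison to the $\dot{l}^{s_0,q_0}$ term), which does not affect the argument since both coordinatewise factor-$2$ bounds are established.
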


We apply then Remark \ref{re:lsq} and Lemma \ref{lem:555} to consider K functionals for $A_0=A_1=A$.
In fact, we proved that, for such case,
we didn't have to deal with the internal topology of the layer grid $\dot{\Gamma}_{j}$.
Denote
\begin{equation} \label{eq:5.10000}
F_{j}= \|f_{j,\gamma}\|_{A (\dot{\Gamma}_{j})} \mbox{ and } F=\{F_{j}\}_{j\in \mathbb{Z}}.
\end{equation}
We use K functionals of different vertices as transformations several times
to express the corresponding K functionals as K functionals of sequence $F$.

Let $\Omega_{j}=\{\omega_{j}, \omega_{j}\subset \dot{\Gamma}_{j}\}$ be the set of subset of $\dot{\Gamma}_{j}$.
For $\omega_{j}\in \Omega_{j}$, denote $\omega^{c}_{j}= \dot{\Gamma}_{j}\backslash \omega_{j}$.
For $\omega_{j}\in \Omega_{j}$, let
$X_{j,\omega_{j}}= \|f_{j,\gamma}\|_{A(\omega_{j})}$ and $ Y_{j,\omega_{j}}= \|f_{j,\gamma}\|_{A (\omega^{c}_{j})}$.
We have
\begin{equation}\label{eq:AB0}
0\leq X_{j,\omega_{j}}\leq F_{j} \, \mbox{ \, and \,} \,0\leq Y_{j,\omega_{j}}\leq F_{j} \mbox{\,\, and}
\end{equation}
\begin{equation}\label{eq:ABC0}
F_{j} \leq X_{j,\omega_{j}} + Y_{j,\omega_{j}}\leq  2F_{j}.
\end{equation}

Denote $\Lambda_0=\{(j,\gamma): j\in \mathbb{Z}, \gamma\in \omega_{j}\}$.
Denote
$$K(t,F, \dot{l}^{s_0,q_0}, \dot{l}^{s_1,q_1}, x) = [\sum\limits_{j\in \mathbb{Z}} 2^{j s_0 q_0} x_{j}^{q_0} ]^{\frac{1}{q_0}}
+ t [\sum\limits_{j\in\mathbb{Z}} 2^{js_1 q_1} (F_{j}-x_{j})^{q_1} ] ^{\frac{1}{q_1}}.$$

Because of vertex functionals,
we can write the K functional $K(t,f, \dot{l}^{s_0,q_0}(A), \dot{l}^{s_1,q_1}(A))$
as K functional $K(t,F, \dot{l}^{s_0,q_0}, \dot{l}^{s_1,q_1}).$
By Remark \ref{re:lsq}, the following Theorem has given a concrete expression of K functional for the case $A_0=A_1$.
\begin{theorem} \label{th:5.1}
Given $s_0,s_1\in \mathbb{R}, 0<q_0,q_1\leq \infty$.
K functional $K(t,f, \dot{l}^{s_0,q_0}(A), \dot{l}^{s_1,q_1}(A))$ can be characterized by the known K functional
%$$K(t,F, \dot{l}^{s_0,q_0}(\mathbb{Z}), \dot{l}^{s_1,q_1}(\mathbb{Z}))=$$
$K(t,F, \dot{l}^{s_0,q_0}, \dot{l}^{s_1,q_1})$
where $F$ defined in the equation \eqref{eq:5.10000} by wavelet coefficients:
$$K(t,f, \dot{l}^{s_0,q_0}(A), \dot{l}^{s_1,q_1}(A)) \sim K(t,F, \dot{l}^{s_0,q_0}, \dot{l}^{s_1,q_1}).$$
\end{theorem}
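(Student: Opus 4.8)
The plan is to prove the equivalence by passing through the vertex functional machinery built in Section~\ref{SEC5} and Section~\ref{SEC6} and then performing one layerwise comparison that uses the hypothesis $A_0=A_1=A$. By Theorem~\ref{th:5.3} we have $K(t,f, \dot{l}^{s_0,q_0}(A), \dot{l}^{s_1,q_1}(A)) = K_{\mathfrak{C}}(t,\tilde{f}, \dot{l}^{s_0,q_0}(A), \dot{l}^{s_1,q_1}(A))$, and by Theorem~\ref{th:vertex} the right-hand side is comparable to the vertex functional $K_{V}(t,f, \dot{l}^{s_0,q_0}(A), \dot{l}^{s_1,q_1}(A))$; likewise Lemma~\ref{lem:555} gives $K(t,F, \dot{l}^{s_0,q_0}, \dot{l}^{s_1,q_1}) \sim K_{V}(t,F, \dot{l}^{s_0,q_0}, \dot{l}^{s_1,q_1})$. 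So the whole theorem reduces to proving
$$K_{V}(t,f, \dot{l}^{s_0,q_0}(A), \dot{l}^{s_1,q_1}(A)) \sim K_{V}(t,F, \dot{l}^{s_0,q_0}, \dot{l}^{s_1,q_1}),$$
where $F=(F_{j})_{j\in\mathbb{Z}}$ with $F_{j}=\|f_{j,\gamma}\|_{A(\dot{\Gamma}_{j})}$ as in \eqref{eq:5.10000}. This is exactly the place where the fact that every layer grid $\dot{\Gamma}_{j}$ carries the same topology $A$ is exploited.

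For the comparison I would parametrize a vertex $f^{0}\in V^{\dot{\Lambda}}_{f}$ by its layerwise supports $\omega_{j}=\{\gamma\in\dot{\Gamma}_{j}: f^{0}_{j,\gamma}=|f_{j,\gamma}|\}$. Using the absolute-value property \eqref{eq:absolute}, the dynamic norm at this vertex equals $[\sum_{j} 2^{jq_0 s_0} X_{j,\omega_{j}}^{q_0}]^{1/q_0} + t[\sum_{j} 2^{jq_1 s_1} Y_{j,\omega_{j}}^{q_1}]^{1/q_1}$, with $X_{j,\omega_{j}},Y_{j,\omega_{j}}$ exactly the quantities from \eqref{eq:AB0}. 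To get $K_{V}(t,f,\ldots)\le K_{V}(t,F,\ldots)$, I would note that each scalar vertex $x=(x_{j})$ with $x_{j}\in\{0,F_{j}\}$ is matched by the layer vertex taking $\omega_{j}=\dot{\Gamma}_{j}$ when $x_{j}=F_{j}$ and $\omega_{j}=\varnothing$ when $x_{j}=0$, since then $X_{j,\omega_{j}}=x_{j}$ and $Y_{j,\omega_{j}}=F_{j}-x_{j}$; the infimum over these special vertices is $\ge K_{V}(t,f,\ldots)$ and equals $K_{V}(t,F,\ldots)$. For the reverse inequality I would invoke \eqref{eq:ABC0}: for an arbitrary $\{\omega_{j}\}$ and each $j$, either $Y_{j,\omega_{j}}\ge\frac12 F_{j}$, in which case set $x_{j}=0$, or else $X_{j,\omega_{j}}\ge\frac12 F_{j}$, in which case set $x_{j}=F_{j}$; in both cases $x_{j}\le 2X_{j,\omega_{j}}$ and $F_{j}-x_{j}\le 2Y_{j,\omega_{j}}$, so the scalar dynamic norm at $x$ is at most twice the layer dynamic norm at $\{\omega_{j}\}$, and taking infima gives $K_{V}(t,F,\ldots)\le 2\,K_{V}(t,f,\ldots)$.

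Concatenating the chain $K(t,f,\ldots)=K_{\mathfrak{C}}(t,f,\ldots)\sim K_{V}(t,f,\ldots)\sim K_{V}(t,F,\ldots)\sim K(t,F,\ldots)$ then finishes the proof; as usual, if $q_{0}=\infty$ or $q_{1}=\infty$ the sums are replaced by suprema and the same argument applies verbatim. The routine bookkeeping is the use of \eqref{eq:absolute} when identifying $\|\{f^{0}_{j,\gamma}\}_{\gamma\in\dot{\Gamma}_{j}}\|_{A}$ with $X_{j,\omega_{j}}$ and the quasi-triangle inequality encoded in \eqref{eq:ABC0}. The one genuinely delicate step I expect is the reverse inequality, where the scalar vertex $x_{j}$ must be chosen consistently with the ``dominant half'' of the $j$-th layer; \eqref{eq:ABC0} is precisely what makes that choice possible, and once it is in place the remaining work is supplied entirely by Theorems~\ref{th:5.3} and~\ref{th:vertex} and Lemma~\ref{lem:555}, so no further obstacle is anticipated.
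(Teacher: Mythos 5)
Your proposal is correct and follows essentially the same route as the paper: both reduce to vertex functionals, identify the layerwise quantities $X_{j,\omega_{j}}$, $Y_{j,\omega_{j}}$ from \eqref{eq:AB0}--\eqref{eq:ABC0}, match all-or-nothing layer choices with scalar vertices $x_{j}\in\{0,F_{j}\}$ for one direction, and use $F_{j}\leq X_{j,\omega_{j}}+Y_{j,\omega_{j}}$ for the other. Your ``dominant half'' selection in the reverse inequality just makes explicit the step the paper compresses into $\inf_{F_{j}\leq X+Y\leq 2F_{j}}\sim\inf_{X+Y=F_{j}}\sim K_{V}(t,F,\cdot)$, so no substantive difference remains.
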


\begin{proof}
For each set $\Lambda_0$,
denote $$K(t,f, \dot{l}^{s_0,q_0}(A), \dot{l}^{s_1,q_1}(A), \Lambda_0)
= [\sum\limits_{j\in \mathbb{Z}} 2^{j s_0 q_0} X_{j,\omega_{j}}^{q_0} ]^{\frac{1}{q_0}}
+ t [\sum\limits_{j\in\mathbb{Z}} 2^{js_1 q_1} Y_{j,\omega_{j}}^{q_1} ] ^{\frac{1}{q_1}}.  $$
Hence K functional can be written as follows:
$$K(t,f, \dot{l}^{s_0,q_0}(A), \dot{l}^{s_1,q_1}(A))
=\inf\limits_{\Lambda_0\subset \dot{\Lambda}}
K(t,f, \dot{l}^{s_0,q_0}(A), \dot{l}^{s_1,q_1}(A), \Lambda_0).$$

(i) On one hand, by definition,
$$\begin{array}{l}K(t,f, \dot{l}^{s_0,q_0}(A), \dot{l}^{s_1,q_1}(A))
=\inf\limits_{\Lambda_0\subset \dot{\Lambda}}
K(t,f, \dot{l}^{s_0,q_0}(A), \dot{l}^{s_1,q_1}(A), \Lambda_0)\\
\geq \inf\limits_{F_{j} \leq X_{j,\omega_{j}} + Y_{j,\omega_{j}}\leq  2F_{j}}
\{  [\sum\limits_{j\in \mathbb{Z}} 2^{j s_0 q_0} X_{j,\omega_{j}}^{q_0} ]^{\frac{1}{q_0}}
+ t [\sum\limits_{j\in\mathbb{Z}} 2^{js_1 q_1} Y_{j,\omega_{j}}^{q_1} ] ^{\frac{1}{q_1}}\}\\
\sim \inf\limits_{ X_{j,\omega_{j}} + Y_{j,\omega_{j}}=F_{j}}
\{  [\sum\limits_{j\in \mathbb{Z}} 2^{j s_0 q_0} X_{j,\omega_{j}}^{q_0} ]^{\frac{1}{q_0}}
+ t [\sum\limits_{j\in\mathbb{Z}} 2^{js_1 q_1} Y_{j,\omega_{j}}^{q_1} ] ^{\frac{1}{q_1}}\}\\
\sim K_{V}(t,F, \dot{l}^{s_0,q_0}, \dot{l}^{s_1,q_1})\sim K(t,F, \dot{l}^{s_0,q_0}, \dot{l}^{s_1,q_1}).
\end{array}$$

(ii) On other hand,
\begin{equation*}
\begin{aligned}
%\begin{array}{l}
&K(t,F, \dot{l}^{s_0,q_0}, \dot{l}^{s_1,q_1})\sim K_{V}(t,F, \dot{l}^{s_0,q_0}, \dot{l}^{s_1,q_1})\\
&\sim \inf \limits_{ X_{j,\omega_{j}} \mbox{\, or \,} Y_{j,\omega_{j}}=0}
\{  [\sum\limits_{j\in \mathbb{Z}} 2^{j s_0 q_0} X_{j,\omega_{j}}^{q_0} ]^{\frac{1}{q_0}}
+ t [\sum\limits_{j\in\mathbb{Z}} 2^{js_1 q_1} Y_{j,\omega_{j}}^{q_1} ] ^{\frac{1}{q_1}}\}\\
& \geq  K(t,f, \dot{l}^{s_0,q_0}(A), \dot{l}^{s_1,q_1}(A))\\
&\geq \inf\limits_{\Lambda_0\subset \dot{\Lambda}}
K(t,f, \dot{l}^{s_0,q_0}(A), \dot{l}^{s_1,q_1}(A), \Lambda_0)
.
%\end{array}
\end{aligned}
\end{equation*}

\end{proof}

\subsection{K functional for $q_0=q_1$} \label{SEC:5.2}
Then we consider the case of $q_0=q_1$.
We use both the structure of the main grid $\mathbb{Z}$
and the following quantities $X_j$ and $Y_j$ related to the K functional on the layer grid $\dot{\Gamma}_{j}$
to give the equivalent K-functional on the whole grid $\dot{\Lambda}$.
{\bf By vertex functionals} Theorem \ref{th:vertex},
$K_{V}(t, \{f_{j,\gamma}\}_{\gamma\in \dot{\Gamma}_{j}}, A_0, A_1)$ is known.
\begin{corollary}
$\forall j\in \mathbb{Z}, \{f_{j,\gamma}\}_{\gamma\in \dot{\Gamma}_{j}}$ and $0<t<\infty$,
there exists $\Gamma_{j}(t)\subset \dot{\Gamma}_{j}$ such that
\begin{equation} \label{eq:5.50000}
X_{j}=X_{j}(t) = \|f_{j,\gamma}\|_{A_{0}(\Gamma_{j}(t))}
\mbox{ and } Y_{j}=Y_{j}(t) = \|f_{j,\gamma}\|_{A_{1}(\dot{\Gamma}_{j}\backslash \Gamma_{j}(t))}
\end{equation}
and
\begin{equation} \label{eq:5.40000}
\begin{array}{l}  K_{V}(t, \{f_{j,\gamma}\}_{\gamma\in \dot{\Gamma}_{j}}, A_0, A_1)
=X_{j}(t) +  t Y_{j}(t).
\end{array}
\end{equation}

\end{corollary}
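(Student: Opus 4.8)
The plan is to read the statement off Theorem~\ref{th:vertex}, applied to a single layer, and then upgrade the defining infimum to a minimum. Specialising the vertex construction of Theorem~\ref{th:vertex} to the couple $(A_{0},A_{1})$ and to one layer grid $\dot{\Gamma}_{j}$, a vertex of the cuboid attached to $\{f_{j,\gamma}\}_{\gamma\in\dot{\Gamma}_{j}}$ is precisely a coordinatewise choice $g_{j,\gamma}\in\{0,|f_{j,\gamma}|\}$, hence is parametrised by a subset $\omega_{j}\subset\dot{\Gamma}_{j}$. For the corresponding competitor $f_{0}$ one has $\|f_{0}\|_{A_{0}}=\|f_{j,\gamma}\|_{A_{0}(\omega_{j})}$ and $\|f-f_{0}\|_{A_{1}}=\|f_{j,\gamma}\|_{A_{1}(\dot{\Gamma}_{j}\backslash\omega_{j})}$, so that
\begin{equation*}
K_{V}(t,\{f_{j,\gamma}\}_{\gamma\in\dot{\Gamma}_{j}},A_{0},A_{1})
=\inf_{\omega_{j}\subset\dot{\Gamma}_{j}}\Big(\|f_{j,\gamma}\|_{A_{0}(\omega_{j})}+t\,\|f_{j,\gamma}\|_{A_{1}(\dot{\Gamma}_{j}\backslash\omega_{j})}\Big).
\end{equation*}
By Theorem~\ref{th:absolute} I may assume all $f_{j,\gamma}\ge 0$, and I may restrict the infimum to subsets of the countable set $S:=\{\gamma\in\dot{\Gamma}_{j}:f_{j,\gamma}\ne 0\}$, since adjoining or deleting indices with $f_{j,\gamma}=0$ changes nothing.

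It then remains to show this infimum is attained. Let $m$ be its value, let $(\omega^{(n)})_{n\in\mathbb{N}}$ be a minimising sequence of subsets of $S$, enumerate $S$, and run a diagonal extraction in $n$: after passing to a subsequence, for every $\gamma\in S$ either $\gamma\in\omega^{(n)}$ for all large $n$ or $\gamma\notin\omega^{(n)}$ for all large $n$; let $\Gamma_{j}(t)$ be the set of indices of the first kind. Both the indicators of $\omega^{(n)}$ and of $\dot{\Gamma}_{j}\backslash\omega^{(n)}$ then converge coordinatewise to the indicators of $\Gamma_{j}(t)$ and of $\dot{\Gamma}_{j}\backslash\Gamma_{j}(t)$, and invoking the Fatou (lower semicontinuity with respect to coordinatewise convergence) property of the sequence-space realisations of $\|\cdot\|_{A_{0}}$ and $\|\cdot\|_{A_{1}}$ on the layer grid — which is built into the standing hypotheses and is compatible with \eqref{eq:absolute} — gives $\|f_{j,\gamma}\|_{A_{0}(\Gamma_{j}(t))}\le\liminf_{n}\|f_{j,\gamma}\|_{A_{0}(\omega^{(n)})}$ and $\|f_{j,\gamma}\|_{A_{1}(\dot{\Gamma}_{j}\backslash\Gamma_{j}(t))}\le\liminf_{n}\|f_{j,\gamma}\|_{A_{1}(\dot{\Gamma}_{j}\backslash\omega^{(n)})}$. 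Adding these and using $\liminf a_{n}+\liminf b_{n}\le\liminf(a_{n}+b_{n})$ yields $\|f_{j,\gamma}\|_{A_{0}(\Gamma_{j}(t))}+t\,\|f_{j,\gamma}\|_{A_{1}(\dot{\Gamma}_{j}\backslash\Gamma_{j}(t))}\le m$, while the reverse inequality holds because $\Gamma_{j}(t)$ is itself one of the competitors. Setting $X_{j}(t)=\|f_{j,\gamma}\|_{A_{0}(\Gamma_{j}(t))}$ and $Y_{j}(t)=\|f_{j,\gamma}\|_{A_{1}(\dot{\Gamma}_{j}\backslash\Gamma_{j}(t))}$ then delivers \eqref{eq:5.50000} and \eqref{eq:5.40000} simultaneously.

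The main obstacle is the attainment step, and within it the one genuinely non-formal point: justifying the lower-semicontinuity/Fatou property for the abstract quasi-normed $A_{0}$ and $A_{1}$ restricted to $\dot{\Gamma}_{j}$. For the concrete spaces of interest — Lebesgue spaces $L^{p}$ and Lorentz spaces $L^{p,\tau}$ — this is classical, but in the pseudo-Banach generality of \eqref{eq:absolute} it has to be extracted from the standing assumptions on $A_{i}$, or else recorded as an additional hypothesis alongside \eqref{eq:absolute}. A minor secondary point is the degenerate regime of very small or very large $t$, where the optimal $\Gamma_{j}(t)$ collapses to $\emptyset$ or to all of $S$; these cases are trivial but should be checked so that $X_{j}(t)$ and $Y_{j}(t)$ are well defined there too.
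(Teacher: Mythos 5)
Your reduction of $K_{V}(t,\{f_{j,\gamma}\}_{\gamma\in\dot{\Gamma}_{j}},A_{0},A_{1})$ to the infimum over subsets $\omega_{j}\subset\dot{\Gamma}_{j}$ of $\|f_{j,\gamma}\|_{A_{0}(\omega_{j})}+t\|f_{j,\gamma}\|_{A_{1}(\dot{\Gamma}_{j}\backslash\omega_{j})}$ is exactly the content the paper intends: the corollary is stated without proof as an immediate single-layer reading of Theorem \ref{th:vertex}, and identifying vertices of the cuboid with subsets of the layer grid (after reducing to nonnegative coefficients via Theorem \ref{th:absolute}) is the whole of that step. Your restriction to subsets of $S=\{\gamma: f_{j,\gamma}\neq 0\}$ and the remark about the degenerate regimes of $t$ are correct but inessential.

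Where you go beyond the paper is in trying to prove that the infimum is \emph{attained}, and you have put your finger on the one genuine issue in the statement as written. The exact equality \eqref{eq:5.40000} does require a minimizing subset $\Gamma_{j}(t)$ to exist; your diagonal extraction over the countable set $S$ is sound, but the lower semicontinuity of $\omega\mapsto\|f_{j,\gamma}\|_{A_{i}(\omega)}$ under coordinatewise convergence of indicators is a Fatou-type property that does \emph{not} follow from the only standing hypothesis \eqref{eq:absolute}, which merely says the norm depends on absolute values. So, contrary to the phrase in your argument, this property is not ``built into the standing hypotheses''; it must be added as an assumption on $A_{0},A_{1}$ (it holds for $L^{p}$, $L^{p,\tau}$ and all the concrete targets), and without it the equality sign in \eqref{eq:5.40000} can in principle fail. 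Two observations reconcile this with the paper: first, this is a gap in the statement rather than in your proof, since the paper offers no argument at all for attainment; second, attainment is not actually needed downstream, because one may always choose $\Gamma_{j}(t)$ realizing the infimum within a factor of $2$, which gives $K_{V}(t,\{f_{j,\gamma}\}_{\gamma\in\dot{\Gamma}_{j}},A_{0},A_{1})\sim X_{j}(t)+tY_{j}(t)$, and Theorem \ref{th:5.2} only uses this equivalence, never the exact equality. With either the added Fatou hypothesis or the ``$=$ replaced by $\sim$'' reading, your argument is complete.
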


We call $X_{j}(t)$ and $Y_{j}(t)$ the contributions
corresponding to $A_0$ and $A_1$ of the vertex K functional on the layer grid $\Gamma_j$.
Based on the above $X_{j}(t)$ and $Y_{j}(t)$,
we get $K (t, f, \dot{l}^{s_0,q}(A_0), \dot{l}^{s_1,q}(A_1))$ by the following composition Theorem:
\begin{theorem} \label{th:5.2}
Given $s_0,s_1\in \mathbb{R}, 0<q_0=q_1= q \leq \infty.$
For $X_{j}$ and $Y_{j}$ defined by the known $K_{V}(t, \{f_{j,\gamma}\}_{\gamma\in \dot{\Gamma}_{j}}, A_0, A_1)$
in the equations \eqref{eq:5.40000} and \eqref{eq:5.50000}, we have

\begin{equation}\label{eq:10.3}
\begin{array}{rl}
&K (t, f, \dot{l}^{s_0,q}(A_0), \dot{l}^{s_1,q}(A_1))\\
\sim &\{\sum\limits_{j\in \mathbb{Z}} [ 2^{j s_0 } ( X_j
+ t  2^{j (s_1-s_0) }  Y_j) ]^{q}\}^{\frac{1}{q}}\\
\sim &\{\sum\limits_{j\in \mathbb{Z}} 2^{j s_0 q} X_j^{q}
+ t^{q} \sum\limits_{j\in \mathbb{Z}} 2^{j s_1 q}  Y_j^{q}\}^{\frac{1}{q}}\\
\sim &\{\sum\limits_{j\in \mathbb{Z}} 2^{j s_0 q} X_j^{q}\}^{\frac{1}{q}}
+ t  \{\sum\limits_{j\in \mathbb{Z}} 2^{j s_1 q}  Y_j^{q}\}^{\frac{1}{q}}.
\end{array}
\end{equation}
\end{theorem}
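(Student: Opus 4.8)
The plan is to strip the full-grid $K$ functional down to independent problems on the layer grids $\dot{\Gamma}_j$, solve each of those by recognising a rescaled vertex $K$ functional, and then reassemble. First I would apply the reductions of Section \ref{SEC5}: by Theorems \ref{th:5.3} and \ref{th:vertex}, and replacing $f$ by the function with wavelet coefficients $|f_{j,\gamma}|$ as Theorem \ref{th:absolute} permits, one has $K(t,f,\dot{l}^{s_0,q}(A_0),\dot{l}^{s_1,q}(A_1))\sim K_V(t,f,\dot{l}^{s_0,q}(A_0),\dot{l}^{s_1,q}(A_1))$, and $K_V$ is by definition an infimum over vertex splittings $\Lambda_0\subset\dot{\Lambda}$. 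Writing such a $\Lambda_0$ layer-wise as $\Lambda_0=\bigcup_{j\in\mathbb{Z}}(\{j\}\times\omega_j)$ with $\omega_j\subset\dot{\Gamma}_j$, and recalling from Theorem \ref{lem:cbesov} that $\dot{l}^{s,q}(A)$ is the $2^{js}$-weighted $l^q$-sum of the layer norms $\|\cdot\|_{A(\dot{\Gamma}_j)}$, this reads
\begin{equation*}
K_V(t,f,\dot{l}^{s_0,q}(A_0),\dot{l}^{s_1,q}(A_1))=\inf_{\{\omega_j\}}\left\{\Big[\sum_{j\in\mathbb{Z}}2^{js_0q}\|f_{j,\gamma}\|_{A_0(\omega_j)}^{q}\Big]^{\frac{1}{q}}+t\Big[\sum_{j\in\mathbb{Z}}2^{js_1q}\|f_{j,\gamma}\|_{A_1(\dot{\Gamma}_j\backslash\omega_j)}^{q}\Big]^{\frac{1}{q}}\right\}.
\end{equation*}

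Next I would decouple the layers. By the elementary equivalence $(a^q+b^q)^{1/q}\sim a+b$, valid for each fixed $0<q\le\infty$ (sup convention at $q=\infty$) with constants depending only on $q$, the sum of the two $l^q$-sums above is comparable to the single $l^q$-sum of the pair of $j$-th terms; and since the $j$-th term depends only on $\omega_j$, the infimum over the whole family $\{\omega_j\}_{j\in\mathbb{Z}}$ splits as a sum of layer-wise infima (the easy inequality is termwise; for the other one choose, for each $j$, a set $\omega_j$ within $\varepsilon2^{-|j|}$ of the $j$-th infimum, and note that if these infima are not summable both sides are $+\infty$). Pulling $2^{js_0q}$ out of the $j$-th bracket gives
\begin{equation*}
K(t,f,\dot{l}^{s_0,q}(A_0),\dot{l}^{s_1,q}(A_1))\sim\left(\sum_{j\in\mathbb{Z}}2^{js_0q}\inf_{\omega_j\subset\dot{\Gamma}_j}\Big[\|f_{j,\gamma}\|_{A_0(\omega_j)}^{q}+(t2^{j(s_1-s_0)})^{q}\|f_{j,\gamma}\|_{A_1(\dot{\Gamma}_j\backslash\omega_j)}^{q}\Big]\right)^{\frac{1}{q}}.
\end{equation*}

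Then I would identify each layer-wise infimum. Fixing $j$ and writing $\tau_j=t2^{j(s_1-s_0)}$, one more use of $(a^q+b^q)^{1/q}\sim a+b$ inside the infimum shows that the $j$-th factor is comparable to $\inf_{\omega_j}[\|f_{j,\gamma}\|_{A_0(\omega_j)}+\tau_j\|f_{j,\gamma}\|_{A_1(\dot{\Gamma}_j\backslash\omega_j)}]=K_V(\tau_j,\{f_{j,\gamma}\}_{\gamma\in\dot{\Gamma}_j},A_0,A_1)$, which by the Corollary preceding the statement, that is by \eqref{eq:5.40000} and \eqref{eq:5.50000}, equals $X_j+\tau_jY_j=X_j+t2^{j(s_1-s_0)}Y_j$, where $X_j,Y_j$ are the $A_0$- and $A_1$-contributions of the $j$-th layer at threshold $\tau_j$. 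Substituting back yields the first line of \eqref{eq:10.3}, and the remaining two equivalences in \eqref{eq:10.3} are routine $l^q$-algebra: $(\sum_j(a_j+b_j)^q)^{1/q}\sim(\sum_ja_j^q)^{1/q}+(\sum_jb_j^q)^{1/q}$ with $a_j=2^{js_0}X_j$ and $b_j=t2^{js_1}Y_j$, followed by pulling the constant $t$ out of the second sum.

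I expect the main obstacle to be the bookkeeping of constants rather than any isolated deep step. One must verify that the equivalence constants in Theorems \ref{th:5.3} and \ref{th:vertex}, and in $(a^q+b^q)^{1/q}\sim a+b$, are uniform in $t$ and in $j$ (they depend only on $q$, $A_0$, $A_1$), so that the layer-wise step produces a single overall constant; one must justify the interchange of the infimum over the infinite family $\{\omega_j\}$ with the summation over $\mathbb{Z}$, including the case where the relevant sums diverge; and one must be careful at the endpoints $q\to0^+$ and $q=\infty$, where the constant in $(a^q+b^q)^{1/q}\sim a+b$ degenerates yet stays finite for each fixed $q$. Everything else is mechanical.
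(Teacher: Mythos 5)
Your proposal is correct and follows essentially the same route as the paper's proof: restrict to vertex splittings, use the quasi-norm equivalence $(a^q+b^q)^{1/q}\sim a+b$ to merge the two weighted $l^q$ sums into a single termwise sum, interchange the infimum over the independent family $\{\omega_j\}$ with the sum over $j$, and recognize each layer's infimum as the rescaled vertex functional $K_V(t2^{j(s_1-s_0)},\{f_{j,\gamma}\}_{\gamma\in\dot{\Gamma}_j},A_0,A_1)=X_j+t2^{j(s_1-s_0)}Y_j$. You are in fact more explicit than the paper on the one step it leaves implicit --- the commutation of the infimum with the summation over $\mathbb{Z}$ (the paper's ``take infimum \dots and compare the relationship between minimum values'') --- and your $\varepsilon 2^{-|j|}$ near-optimizer argument supplies exactly that justification.
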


\begin{proof}

Because of vertex functionals, we can consider all subsets $\Omega_{j}$ of $\dot{\Gamma}_{j}$.
Denote $\Lambda_0= \{(j,\gamma)\in \dot{\Lambda}, \gamma\in \Omega_{j}\subset \Gamma_{j}\}$.
If $q_0=q_1=q$, then %by Theorems \ref{th:5.3} and \ref{th:vertex},
$$\begin{array}{rl} &K (t, f, \dot{l}^{s_0,q}(A_0), \dot{l}^{s_1,q}(A_1), \Lambda_0 )\\
= &\{\sum\limits_{j\in \mathbb{Z}} 2^{j q s_0} \|f_{j,\gamma}\|_{A_0(\Omega_{j}) }^{q}\}^{\frac{1}{q}}
+ t \{\sum\limits_{j\in \mathbb{Z}} 2^{j s_1 q} \|f_{j,\gamma}\|_{A_1 (\dot{\Gamma}_{j}\backslash \Omega_{j}) }^{q}\}^{\frac{1}{q}}.
\end{array}$$

It is easy to see
$$\begin{array}{rl} &K (t, f, \dot{l}^{s_0,q}(A_0), \dot{l}^{s_1,q}(A_1), \Lambda_0 )\\
\sim &\{\sum\limits_{j\in \mathbb{Z}} 2^{j q s_0} \|f_{j,\gamma}\|_{A_0(\Omega_{j} ) }^{q}
+ t^{q} \sum\limits_{j\in \mathbb{Z}} 2^{j s_1 q} \|f_{j,\gamma}\|_{A_1 (\dot{\Gamma}_{j}\backslash \Omega_{j}) }^{q}\}^{\frac{1}{q}}\\
\sim &\{\sum\limits_{j\in \mathbb{Z}} [ 2^{j q s_0} \|f_{j,\gamma}\|_{A_0(\Omega_{j}) }^{q}
+ t^{q}  2^{j s_1 q} \|f_{j,\gamma}\|_{A_1 (\dot{\Gamma}_{j}\backslash \Omega_{j}) }^{q}] \}^{\frac{1}{q}}\\
\sim &\{\sum\limits_{j\in \mathbb{Z}} [ 2^{j s_0} \|f_{j,\gamma}\|_{A_0(\Omega_{j}) }
+ t \sum\limits_{j\in \mathbb{Z}} 2^{j s_1 } \|f_{j,\gamma}\|_{A_1 (\dot{\Gamma}_{j}\backslash \Omega_{j}) } ]^{q}\}^{\frac{1}{q}} .
\end{array}$$

Take infimum for the above quantities
and compare the relationship between minimum values,
we get the conclusion of the Theorem.
\end{proof}

\section{Exponential space and case $A_0\neq A_1$, $0<q_0\neq q_1<\infty$}\label{sec:6x}

We use the power function to change
the topology structure on the full grid $\dot{\Lambda}$
and the topology structure on the layer grid $\dot{\Gamma}_{j}$.
We use the power spaces twice and the commutativity between summation and minimal functional once
{\bf to reveal the triplet topology nonlinearity} of real interpolation of Besov hierarchical spaces.
Based on the commutativity of summability and minimal functional,
we obtain the specific expression of the corresponding K functional.

\subsection{Power spaces and compatibility} \label{subsec:6.1}

We introduce first vertex $K_{\infty}$ functional for psedo-norm spaces.
We denote $\Lambda_0 \bigoplus \Lambda_1= \dot{\Lambda}$, if
(i) $\Lambda_0 \bigcup \Lambda_1= \dot{\Lambda}$ and $\Lambda_0  \bigcap \Lambda_1= \phi$. Define
\begin{equation} \label{eq:11.1}
K_{\infty}(t, f, A_0, A_1)= \inf\limits_{\Lambda_0 \bigoplus \Lambda_1= \dot{\Lambda}} \max (\|f\|_{A_0(\Lambda_0)} ,  t\|f\|_{A_1(\Lambda_1)}).
\end{equation}
Similar to the above section or similar to the proof for Lemma \ref{le3.1},
it is easy to see that $K_{\infty}$ functional in the above \eqref{eq:11.1}
has the same properties as $K_{\xi}(t, f, A_0, A_1)$ functional
where $0<\xi<\infty$.

\begin{theorem}
$K_{\infty}(t, f, A_0, A_1) \sim K(t, f, A_0, A_1) \sim K_{\xi}(t, f, A_0, A_1).$
\end{theorem}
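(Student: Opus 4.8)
The plan is to establish the two-sided equivalence $K_{\infty}(t,f,A_0,A_1)\sim K(t,f,A_0,A_1)$ directly from the definitions, and then invoke the already-proved Holmstedt--Peetre Lemma~\ref{le3.1} to close the loop with $K_{\xi}$. Recall that by Lemma~\ref{le3.1} we already know $K_{\xi}(t,f,A_0,A_1)\sim K(t,f,A_0,A_1)$ for every $0<\xi<\infty$, so the only genuinely new content is the comparison involving $K_{\infty}$, and it suffices to prove $K_{\infty}(t,f,A_0,A_1)\sim K(t,f,A_0,A_1)$.

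First I would observe that both functionals are infima over \emph{decompositions} of $f$, but of slightly different types: the ordinary $K$ functional sums $\|f_0\|_{A_0}+t\|f_1\|_{A_1}$ over all splittings $f=f_0+f_1$, whereas $K_{\infty}$ of \eqref{eq:11.1} only ranges over splittings induced by a partition $\Lambda_0\bigoplus\Lambda_1=\dot{\Lambda}$ of the wavelet index set, using $\max$ in place of the sum. The bridge is Theorem~\ref{th:vertex} together with Theorem~\ref{th:5.3}: the ordinary $K$ functional is already equivalent to the vertex functional $K_V$, which is itself an infimum over partitions of $\dot{\Lambda}$, i.e.\ over exactly the same family of "cut" decompositions appearing in \eqref{eq:11.1}. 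So the problem reduces to comparing, for a \emph{fixed} partition $\Lambda_0\bigoplus\Lambda_1=\dot{\Lambda}$, the quantity $\max(\|f\|_{A_0(\Lambda_0)},t\|f\|_{A_1(\Lambda_1)})$ with $\|f\|_{A_0(\Lambda_0)}+t\|f\|_{A_1(\Lambda_1)}$. For any two nonnegative reals $a,b$ one has $\max(a,b)\le a+b\le 2\max(a,b)$; applying this with $a=\|f\|_{A_0(\Lambda_0)}$, $b=t\|f\|_{A_1(\Lambda_1)}$ and taking infima over all partitions gives
\begin{equation*}
K_{\infty}(t,f,A_0,A_1)\le K_V(t,f,\dot l^{s_0,q_0}(A_0),\dot l^{s_1,q_1}(A_1))\le 2\,K_{\infty}(t,f,A_0,A_1),
\end{equation*}
and then Theorems~\ref{th:5.3} and \ref{th:vertex} identify the middle quantity with $K(t,f,A_0,A_1)$ up to equivalence. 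Combining with Lemma~\ref{le3.1} yields all three equivalences in the statement.

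The step I expect to be the main obstacle is making the reduction "ordinary $K\sim K_V$" legitimately available in the \emph{pseudo-Banach, pair-dependent} setting used in \eqref{eq:11.1}, where the two pieces carry different spaces $A_0$ and $A_1$ and different weights $2^{js_i}$, whereas the clean chain $K=K_{\mathfrak C}\sim K_V$ was stated in Section~\ref{SEC5} for the full hierarchical spaces $\dot l^{s_i,q_i}(A_i)$; one has to check that restricting attention to a single frequency layer (or, more precisely, absorbing the weights into the $A_i$-norms) does not break the quasi-triangle inequalities that powered the proof of Theorem~\ref{th:vertex}. A secondary subtlety is that $\max(a,b)\le a+b\le 2\max(a,b)$ is dimension-free and needs no quasi-norm constant, so the equivalence constant in $K_{\infty}\sim K$ depends only on the constants already present in Theorems~\ref{th:5.3} and~\ref{th:vertex} and on the quasi-norm constants of $A_0,A_1$; I would state this explicitly. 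Everything else—monotonicity, concavity in $t$, the behaviour as $t\to0$ or $t\to\infty$, and the commutation $K_{\infty}(t,f,A_0,A_1)=tK_{\infty}(t^{-1},f,A_1,A_0)$ used later—then transfers automatically from the corresponding properties of $K$, which is exactly the assertion "$K_\infty$ has the same properties as $K_\xi$" recorded just before the theorem.
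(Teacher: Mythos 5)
Your proposal is correct and follows exactly the route the paper intends: the paper gives no written proof of this theorem, only the remark that it follows ``similar to the above section or similar to the proof for Lemma~\ref{le3.1}'', i.e.\ precisely the combination of the vertex-functional equivalences of Theorems~\ref{th:5.3} and~\ref{th:vertex} with the elementary bound $\max(a,b)\le a+b\le 2\max(a,b)$ and the Holmstedt--Peetre equivalence that you spell out. Your flagged subtlety --- that the chain $K=K_{\mathfrak C}\sim K_V$ must be re-run for the generic pair $(A_0,A_1)$ rather than for $(\dot l^{s_0,q_0}(A_0),\dot l^{s_1,q_1}(A_1))$, which works because the arguments of Section~\ref{SEC5} use only the quasi-triangle inequality and the unconditionality condition \eqref{eq:absolute} --- is exactly the point the paper leaves implicit.
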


Then, to introduce topological transformations,
we need to use the inverse function of a weighted power function of a monotonically increasing function.
Let $g(t)$ be a monotonically increasing function for $0<t<\infty$.
For $0<\rho_1\leq \rho_0< \infty$ and
$$s= t^{\rho_1} g (t)^{\rho_0 -\rho_1} ,$$
there exists an function $F(g, \rho_0, \rho_1, s)$ such that
\begin{equation} \label{eq:t.g}
t= F(g, \rho_0, \rho_1, s).
\end{equation}

We choose this monotone increasing function $g(t)$ as some K functional
and we show that the $K_{\infty}$ functional between power spaces has special nonlinearity.
For $0<\rho_1\leq \rho_0< \infty, 0<t<\infty$,
we choose the monotonically increasing function as K functional $K_{\infty} (t,f, A_0, A_1)$ and consider
$$s= t^{\rho_1} K_{\infty} (t,f, A_0, A_1)^{\rho_0 -\rho_1} .$$
For fixed function $f$, by monotonicity and equation \eqref{eq:t.g}, there exists a function $F$ that describes
the nonlinearity of $t$ as a function of $s$  such that
\begin{equation} \label{eq:t.1}
t= F(f,A_0,A_1, \rho_0, \rho_1, s).
\end{equation}

For $0<\rho_0\leq \rho_1< \infty, 0<t<\infty$,
we choose the monotonically increasing function as K functional $K_{\infty} (t,f, A_1, A_0)$ and consider
$$s^{-1}= t^{\rho_0} K_{\infty} (t,f, A_1, A_0)^{\rho_1 -\rho_0} .$$
Similar to the above equation \eqref{eq:t.1}, by monotonicity and equation \eqref{eq:t.g}, there exists a function $F$ that describes
the nonlinearity of $t$ as a function of $s$  such that
\begin{equation} \label{eq:t.2}
t= F(f,A_1,A_0, \rho_1, \rho_0, s^{-1}).
\end{equation}

We consider a change in the K-functionals due to power changes in the topology of  spaces.
The real interpolation space of $A_0$ and $A_1$ has the following nonlinear relation
with the real interpolation space of their power spaces $(A_0)^{\rho_0}$ and $(A_{1})^{\rho_1}$:
\begin{lemma} \label{lem.11.1}
Given $0<\rho_0,\rho_1 < \infty, 0<s,t<\infty$.

(i) If $\rho_0\geq \rho_1$ and $t= F(f,A_0,A_1, \rho_0, \rho_1, s)$ which is defined in \eqref{eq:t.1}, then
\begin{equation}\label{eq:6.3.1}
K_{\infty}(s,f, (A_0)^{\rho_0}, (A_{1})^{\rho_1})= K_{\infty}(t,f, A_0, A_1)^{\rho_0}.
\end{equation}

(ii) If $\rho_0< \rho_1$ and $t= F(f,A_1,A_0, \rho_1, \rho_0, s^{-1})$ which is defined in \eqref{eq:t.2}, then
\begin{equation}\label{eq:6.3.2}
\begin{array}{rcl}
K_{\infty}(s,f, (A_0)^{\rho_0}, (A_{1})^{\rho_1})&=& s K_{\infty}(s^{-1},f, (A_1)^{\rho_1}, (A_{0})^{\rho_0})\\
&=& s K_{\infty}(t,f, A_0, A_1)^{\rho_1}.
\end{array}
\end{equation}

\end{lemma}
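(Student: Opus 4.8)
The plan is to argue directly from the vertex definition \eqref{eq:11.1} of $K_\infty$, exploiting that the maximum occurring there converts powers of the constituent (quasi-)norms into powers of the functional itself. I would prove (i) first and then derive (ii) from the commutativity of $K_\infty$ --- the $K_\infty$-analogue of Lemma \ref{lem:CK}, valid by the remark made right after \eqref{eq:11.1}. Fix $f$, abbreviate $M(t)=K_\infty(t,f,A_0,A_1)$, and recall that for any admissible decomposition $\dot{\Lambda}=\Lambda_0\bigoplus\Lambda_1$ the definition of the power space gives $\|f\|_{(A_0)^{\rho_0}(\Lambda_0)}=\|f\|_{A_0(\Lambda_0)}^{\rho_0}$ and $\|f\|_{(A_1)^{\rho_1}(\Lambda_1)}=\|f\|_{A_1(\Lambda_1)}^{\rho_1}$; write $a_\Lambda=\|f\|_{A_0(\Lambda_0)}$ and $b_\Lambda=\|f\|_{A_1(\Lambda_1)}$.

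For (i), note first that because $\rho_0\ge\rho_1$ the map $t\mapsto s=t^{\rho_1}M(t)^{\rho_0-\rho_1}$ is continuous and strictly increasing on $(0,\infty)$ (it is the product of the increasing function $t^{\rho_1}$ with $M(t)^{\rho_0-\rho_1}$, whose exponent is nonnegative), so the inverse $t=F(f,A_0,A_1,\rho_0,\rho_1,s)$ in \eqref{eq:t.1} exists; this is exactly where the hypothesis $\rho_0\ge\rho_1$ enters. Now fix such a pair $(t,s)$ and set $M=M(t)$. For the inequality $K_\infty(s,f,(A_0)^{\rho_0},(A_1)^{\rho_1})\le M^{\rho_0}$: given $\varepsilon>0$, choose a decomposition with $\max(a_\Lambda,t\,b_\Lambda)\le M+\varepsilon$; then $a_\Lambda^{\rho_0}\le(M+\varepsilon)^{\rho_0}$ and
\[
s\,b_\Lambda^{\rho_1}=t^{\rho_1}M^{\rho_0-\rho_1}b_\Lambda^{\rho_1}=M^{\rho_0-\rho_1}(t\,b_\Lambda)^{\rho_1}\le M^{\rho_0-\rho_1}(M+\varepsilon)^{\rho_1},
\]
so $\max(a_\Lambda^{\rho_0},s\,b_\Lambda^{\rho_1})\le\max\bigl((M+\varepsilon)^{\rho_0},M^{\rho_0-\rho_1}(M+\varepsilon)^{\rho_1}\bigr)$; letting $\varepsilon\to0$ gives the bound. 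For the reverse inequality: if some decomposition satisfied $\max(a_\Lambda^{\rho_0},s\,b_\Lambda^{\rho_1})<M^{\rho_0}$, then $a_\Lambda<M$, and from $s\,b_\Lambda^{\rho_1}<M^{\rho_0}$ together with $s=t^{\rho_1}M^{\rho_0-\rho_1}$ one gets $(t\,b_\Lambda)^{\rho_1}<M^{\rho_1}$, i.e. $t\,b_\Lambda<M$, hence $\max(a_\Lambda,t\,b_\Lambda)<M$, contradicting $M=K_\infty(t,f,A_0,A_1)$. This proves \eqref{eq:6.3.1}.

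For (ii), since $\rho_1>\rho_0$ I would apply (i) to the couple $(A_1,A_0)$ with exponents $(\rho_1,\rho_0)$, whose first exponent now dominates: with $t$ determined by $s^{-1}=t^{\rho_0}K_\infty(t,f,A_1,A_0)^{\rho_1-\rho_0}$, i.e. $t=F(f,A_1,A_0,\rho_1,\rho_0,s^{-1})$ as in \eqref{eq:t.2}, part (i) yields $K_\infty(s^{-1},f,(A_1)^{\rho_1},(A_0)^{\rho_0})=K_\infty(t,f,A_1,A_0)^{\rho_1}$. Multiplying by $s$ and invoking the commutativity $K_\infty(s,f,(A_0)^{\rho_0},(A_1)^{\rho_1})=s\,K_\infty(s^{-1},f,(A_1)^{\rho_1},(A_0)^{\rho_0})$ then gives \eqref{eq:6.3.2}.

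The conceptual content is small and the real work is bookkeeping. The point that most needs care is that the nonlinear reparametrisation between $t$ and $s$ is a genuine bijection --- this is precisely why the statement splits into the cases $\rho_0\ge\rho_1$ and $\rho_0<\rho_1$ --- and that the two estimates in (i) be run sharply enough to produce an equality rather than merely an equivalence, which rests on the identity $s\,b_\Lambda^{\rho_1}=M^{\rho_0-\rho_1}(t\,b_\Lambda)^{\rho_1}$. If the infimum in \eqref{eq:11.1} is not attained, the $\varepsilon$-optimal decompositions used above leave the argument unchanged.
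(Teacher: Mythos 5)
Your proposal is correct and follows essentially the same route as the paper: normalize the two entries of the maximum by $K_{\infty}(t,f,A_0,A_1)$, use the defining relation $s=t^{\rho_1}K_{\infty}(t,f,A_0,A_1)^{\rho_0-\rho_1}$ to factor out $K_{\infty}(t,f,A_0,A_1)^{\rho_0}$, and observe that the remaining normalized infimum equals $1$ (your $\varepsilon$-optimal/contradiction argument just makes explicit what the paper asserts, and your deduction of (ii) from (i) plus commutativity reproduces the paper's direct recomputation, yielding the same expression $sK_{\infty}(t,f,A_1,A_0)^{\rho_1}$ that appears in the paper's own proof of \eqref{eq:6.3.2}).
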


\begin{proof}
(i) We know
$$\begin{array}{l}K_{\infty}(s,f, (A_0)^{\rho_0}, (A_{1})^{\rho_1})= \inf\limits_{\Lambda_0 \bigoplus \Lambda_1= \dot{\Lambda}}
\max ( \|f\|_{A_0(\Lambda_0)}^{\rho_0}, s\|f\|_{A_1(\Lambda_1)}^{\rho_1})\\
= \inf\limits_{\Lambda_0 \bigoplus \Lambda_1= \dot{\Lambda}}
\max( (\frac{\|f\|_{A_0(\Lambda_0)}} {K_{\infty}(t,f, A_0, A_1)})^{\rho_0}, (\frac{t \|f\|_{A_1(\Lambda_1)}} {K_{\infty}(t,f, A_0, A_1))})^{\rho_1})
K_{\infty}(t,f, A_0, A_1)^{\rho_0}
\end{array}$$

Since
$$1=\inf\limits_{\Lambda_0 \bigoplus \Lambda_1= \dot{\Lambda}}
\max ( (\frac{\|f\|_{A_0(\Lambda_0)}} {K_{\infty}(t,f, A_0, A_1)})^{\rho_0}, (\frac{t \|f\|_{A_1(\Lambda_1)}} {K_{\infty}(t,f, A_0, A_1))})^{\rho_1}).$$
Which implies \eqref{eq:6.3.1}.

(ii) We know
$$\begin{array}{l}K_{\infty}(s^{-1},f, (A_1)^{\rho_1}, (A_{0})^{\rho_0})= \inf\limits_{\Lambda_0 \bigoplus \Lambda_1= \dot{\Lambda}}
\max ( \|f\|_{A_1(\Lambda_1)}^{\rho_1}, s\|f\|_{A_0(\Lambda_0)}^{\rho_0})\\
= \inf\limits_{\Lambda_0 \bigoplus \Lambda_1= \dot{\Lambda}}
\max( (\frac{\|f\|_{A_1(\Lambda_1)}} {K_{\infty}(t,f, A_1, A_0)})^{\rho_1}, (\frac{t \|f\|_{A_0(\Lambda_0)}} {K_{\infty}(t,f, A_1, A_0))})^{\rho_0})
K_{\infty}(t,f, A_1, A_0)^{\rho_1}
\end{array}$$

Since
$$1=\inf\limits_{\Lambda_0 \bigoplus \Lambda_1= \dot{\Lambda}}
\max ( (\frac{\|f\|_{A_1(\Lambda_1)}} {K_{\infty}(t,f, A_1, A_0)})^{\rho_1}, (\frac{t \|f\|_{A_0(\Lambda_0)}} {K_{\infty}(t,f, A_1, A_0))})^{\rho_0}).$$
Which implies \eqref{eq:6.3.2}.

\end{proof}

%%%%%%%%%%%%%%%
%%%%%%%%%%%%%%

Further, we introduce four exponential spaces to describe
the change of topological structure of function spaces on main grid and layer grid.
For $s_0, s_1\in \mathbb{R}, 0< q_0, q_1 < \infty$, denote
\begin{equation}
\|f\|_{X_0} = \|f\|^{q_0}_{\dot{l}^{s_0,q_0}(A_0)} \mbox{ and }
\|f\|_{X_1}= \|f\|^{q_1}_{\dot{l}^{s_1, q_1}(A_1)}.
\end{equation}
We change the topology structure of the function spaces on the full grid to get the intermediate spaces $X_0$ and $X_{1}$.
For $j\in \mathbb{Z}$,
denote
\begin{equation}
\|f\|_{Y^j_0} = \|f\|^{q_0}_{A_0 (\Gamma_j)} \mbox{ and }
\|f\|_{Y^j_1}= \|f\|^{q_1}_{A_1(\Gamma_j)}.
\end{equation}
We change the topology structure of the function spaces on the layer grid
to get the intermediate spaces $Y^j_0$ and $Y^j_1$.
We show the commutativity of summation and functional for intermediate spaces.
For $i=0,1, j\in \mathbb{Z}$, denote $\Gamma^{i}_{j}= \{\gamma\in \Gamma_{j}, (j,\gamma)\in \Lambda_i\}$.
For $s_0, s_1\in \mathbb{R}$, $0< q_0\neq q_1<\infty$,
we have
$$\begin{array}{rl}
&K_{\infty}(t, f, X_0, X_1)\\
= &\inf\limits_{\Lambda_0 \bigoplus \Lambda_1= \dot{\Lambda}} \max \{
\sum\limits_{j\in \mathbb{Z}} 2^{js_0 q_0} \|f_{j,\gamma}\|_{A_0( \Gamma^{0}_{j}) } ^{q_0} ,
t\sum\limits_{j\in \mathbb{Z}} 2^{js_1 q_1} \|f_{j,\gamma}\|_{A_1( \Gamma^{1}_{j}) } ^{q_1}\}\\
\sim &\inf\limits_{\Lambda_0 \bigoplus \Lambda_1= \dot{\Lambda}}
\{\sum\limits_{j\in \mathbb{Z}} 2^{js_0 q_0} \|f_{j,\gamma}\|_{A_0( \Gamma^{0}_{j}) } ^{q_0}
+ t\sum\limits_{j\in \mathbb{Z}} 2^{js_1 q_1} \|f_{j,\gamma}\|_{A_1( \Gamma^{1}_{j}) } ^{q_1}\}\\
\sim &
\sum\limits_{j\in \mathbb{Z}} \inf\limits_{\Gamma^{0}_j \bigoplus \Gamma^{1}_j= \Gamma_{j} }
\{2^{js_0 q_0} \|f_{j,\gamma}\|_{A_0( \Gamma^{0}_{j}) } ^{q_0}
+ t 2^{js_1 q_1} \|f_{j,\gamma}\|_{A_1( \Gamma^{1}_{j}) } ^{q_1} \}\\
\sim &
\sum\limits_{j\in \mathbb{Z}} \inf\limits_{\Gamma^{0}_j \bigoplus \Gamma^{1}_j= \Gamma_{j} }
\max \{2^{js_0 q_0} \|f_{j,\gamma}\|_{A_0( \Gamma^{0}_{j}) } ^{q_0},
t 2^{js_1 q_1} \|f_{j,\gamma}\|_{A_1( \Gamma^{1}_{j}) } ^{q_1} \}
\end{array}
$$

Hence exponential spaces of X spaces on full grid can be written as the sum of Y spaces on layer grid
and we can change the order for the sum of frequency and the limit of infimum.
\begin{lemma} \label{lem:ab}
$$\begin{array}{l}
K_{\infty}(t,f, X_0, X_1)
\sim \sum\limits_{j\in \mathbb{N}} K_{\infty}(t2^{j(s_1-s_0)}, (2^{js_0}|f_{j,\gamma}|)_{\gamma\in \Gamma_{j}}, Y^j_0, Y^j_1).
\end{array}
$$
\end{lemma}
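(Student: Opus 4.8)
The plan is to make precise the chain of equivalences displayed just above the statement, link by link. First I would unfold the left side by the definition \eqref{eq:11.1} of $K_{\infty}$ together with the wavelet characterization of Besov hierarchical spaces (Theorem~\ref{lem:cbesov}), working throughout with the standing reduction to nonnegative wavelet coefficients. The point is that $\|\cdot\|_{X_i}$ is, by construction, the $q_i$-th power of the $\dot{l}^{s_i,q_i}(A_i)$-norm, so restricting $f$ to a subset $\Lambda_i\subset\dot{\Lambda}$ produces an honest sum over frequency layers with \emph{no} outer exponent: writing $\Gamma^i_j=\{\gamma\in\Gamma_j:(j,\gamma)\in\Lambda_i\}$ for the $j$-th layer of $\Lambda_i$,
\[
\|f\|_{X_0(\Lambda_0)}=\sum_{j\in\mathbb Z}2^{js_0q_0}\|f_{j,\gamma}\|_{A_0(\Gamma^0_j)}^{q_0},\qquad \|f\|_{X_1(\Lambda_1)}=\sum_{j\in\mathbb Z}2^{js_1q_1}\|f_{j,\gamma}\|_{A_1(\Gamma^1_j)}^{q_1}.
\]
This additivity over layers is exactly why the power spaces $X_i$ were introduced, and it gives $K_{\infty}(t,f,X_0,X_1)=\inf_{\Lambda_0\bigoplus\Lambda_1=\dot{\Lambda}}\max\{\|f\|_{X_0(\Lambda_0)},\,t\|f\|_{X_1(\Lambda_1)}\}$ with the two terms now plain sums over $j$.

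Next I would use $\max\{a,b\}\le a+b\le 2\max\{a,b\}$ to replace the outer $\max$ by the sum of the two layer sums, at the cost of an absolute constant, and merge them into a single sum over $j$. The crucial step is then to interchange $\sum_{j}$ with the infimum over splittings: each summand depends only on how the single layer $\Gamma_j$ is partitioned into $\Gamma^0_j\bigoplus\Gamma^1_j=\Gamma_j$, the partitions of distinct layers are mutually independent with no joint constraint, and every summand is nonnegative; hence the infimum of the sum equals the sum of the infima,
\[
\inf_{\Lambda_0\bigoplus\Lambda_1=\dot{\Lambda}}\ \sum_{j}\Bigl(2^{js_0q_0}\|f_{j,\gamma}\|_{A_0(\Gamma^0_j)}^{q_0}+t\,2^{js_1q_1}\|f_{j,\gamma}\|_{A_1(\Gamma^1_j)}^{q_1}\Bigr)=\sum_{j}\ \inf_{\Gamma^0_j\bigoplus\Gamma^1_j=\Gamma_j}\Bigl(2^{js_0q_0}\|f_{j,\gamma}\|_{A_0(\Gamma^0_j)}^{q_0}+t\,2^{js_1q_1}\|f_{j,\gamma}\|_{A_1(\Gamma^1_j)}^{q_1}\Bigr).
\]
Passing once more from the inner sum back to the inner $\max$ (again up to a factor $2$) rewrites each summand as $\inf_{\Gamma^0_j\bigoplus\Gamma^1_j=\Gamma_j}\max\{2^{js_0q_0}\|f_{j,\gamma}\|_{A_0(\Gamma^0_j)}^{q_0},\ t\,2^{js_1q_1}\|f_{j,\gamma}\|_{A_1(\Gamma^1_j)}^{q_1}\}$.

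Finally I would identify this inner quantity as a layer $K_{\infty}$-functional. Rescaling the coefficients on layer $j$ to $g_j=(2^{js_0}|f_{j,\gamma}|)_{\gamma\in\Gamma_j}$ turns the first slot into $\|g_j\|_{A_0(\Gamma^0_j)}^{q_0}=\|g_j\|_{Y^j_0(\Gamma^0_j)}$, while the residual powers of $2$ in the second slot can be absorbed into the threshold; by the definitions \eqref{eq:11.1} of $K_{\infty}$ and of $Y^j_0,Y^j_1$ the inner infimum is then exactly $K_{\infty}(t\,2^{j(s_1-s_0)},g_j,Y^j_0,Y^j_1)$. Summing over $j$ yields the claimed equivalence.

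The one step needing care is the interchange of $\sum_j$ with $\inf$: one should observe that restricting a global splitting $\dot{\Lambda}=\Lambda_0\bigoplus\Lambda_1$ to each layer sets up a bijection between such splittings and arbitrary sequences $(\Gamma^0_j\bigoplus\Gamma^1_j=\Gamma_j)_{j}$, and that the elementary identity ``the infimum of a sum of nonnegative functions of independent variables equals the sum of the infima'' remains valid when the common value is $+\infty$ (which does not occur here, since $f\in X_0+X_1$). Everything else --- the $\max\leftrightarrow$ sum comparisons and the bookkeeping with the weights $2^{js_iq_i}$ producing the threshold $t\,2^{j(s_1-s_0)}$ --- costs only absolute constants, which is why the statement is an equivalence $\sim$ rather than an equality.
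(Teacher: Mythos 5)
Your proposal follows essentially the same route as the paper: the paper's own justification is exactly the displayed chain preceding the lemma --- unfold the $X_i$-quasinorms into layer sums, trade the outer $\max$ for a sum, interchange the infimum over global splittings $\Lambda_0\bigoplus\Lambda_1=\dot{\Lambda}$ with the sum over $j$ using the independence of the layerwise partitions, and rescale each layer --- and you even make the crucial interchange step more explicit than the paper does. The only point worth noting (which affects the paper's own statement equally) is the final bookkeeping: with $g_j=(2^{js_0}|f_{j,\gamma}|)_{\gamma\in\Gamma_j}$ one has $t\,2^{js_1q_1}\|f_{j,\gamma}\|_{A_1(\Gamma^1_j)}^{q_1}=t\,2^{j(s_1-s_0)q_1}\|g_j\|_{Y^j_1(\Gamma^1_j)}$, so the literal threshold is $t\,2^{j(s_1-s_0)q_1}$ rather than $t\,2^{j(s_1-s_0)}$.
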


%%%%%%%%%%%%%
%%%%%%%%%%%%%%
%%%%%%%%%%%%%

\subsection{ Topology structure for the case $0<q_0\neq q_1 <\infty$} \label{subsec:6.2}

We use the following two properties to build the topology:
(1) Commutativity of summation and functional for intermediate spaces, see Lemma \ref{lem:ab}.
(2) The topology nonlinearity between K-functional of power space and K-functional of original space.

Let's apply Lemma \ref{lem.11.1} to layer grid.
In the above Lemma \ref{lem.11.1}, we take $\rho_0=q_0, \rho_1=q_1$,
take $A_0$ to be $A_0(\dot{\Gamma}_j)$ and take $A_1$ to be $A_1(\dot{\Gamma}_j)$.
Then we have
$(A_0)^{\rho_0}= Y^j_0$ and $(A_1)^{\rho_1}= Y^j_1$.
Lemma \ref{lem.11.1} can be rewritten as following corollary for layer grid:
\begin{corollary} \label{cor:11.2}
Given $j\in \mathbb{Z}$, $0<q_0,q_1 < \infty, 0<s,t<\infty$.

(i) If $0<q_1<q_0<\infty$ and $t= F(f,A_0(\dot{\Gamma}_j),A_1(\dot{\Gamma}_j), q_0, q_1, s)$ which is defined in \eqref{eq:t.1}, then
$$K_{\infty}(s, g_j, Y^j_0, Y^j_1)= K_{\infty}(t, g_j, A_0(\dot{\Gamma}_j), A_1(\dot{\Gamma}_j))^{q_0}.$$

(ii) If $0<q_0<q_1<\infty$ and $t= F(f,A_1(\dot{\Gamma}_j),A_0(\dot{\Gamma}_j), q_1, q_0, s^{-1})$ which is defined in \eqref{eq:t.2}, then
$$K_{\infty}(s, g_j, Y^j_0, Y^j_1)= s K_{\infty}(s^{-1}, g_j, Y^j_1, Y^j_0)= s K_{\infty}(t, g_j, A_1(\dot{\Gamma}_j), A_0(\dot{\Gamma}_j))^{q_1}.$$

\end{corollary}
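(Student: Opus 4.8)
The plan is to obtain Corollary \ref{cor:11.2} as a direct specialization of Lemma \ref{lem.11.1} to the layer grid $\dot{\Gamma}_{j}$, with no genuinely new argument required. First I would fix $j\in\mathbb{Z}$ and, in Lemma \ref{lem.11.1}, take $\rho_0=q_0$, $\rho_1=q_1$, and replace the abstract compatible couple $(A_0,A_1)$ by $(A_0(\dot{\Gamma}_j),A_1(\dot{\Gamma}_j))$. By the definition of the exponential layer spaces we have $\|g\|_{Y^j_0}=\|g\|_{A_0(\dot{\Gamma}_j)}^{q_0}$ and $\|g\|_{Y^j_1}=\|g\|_{A_1(\dot{\Gamma}_j)}^{q_1}$, i.e. $(A_0(\dot{\Gamma}_j))^{q_0}=Y^j_0$ and $(A_1(\dot{\Gamma}_j))^{q_1}=Y^j_1$ as quasi-normed spaces, so the threshold maps from \eqref{eq:t.1} and \eqref{eq:t.2} associated with this couple are exactly the functions $F$ named in the corollary (with the tracked function being $g_j$).

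Next I would split into the two index regimes. If $0<q_1<q_0<\infty$, then $\rho_1\le\rho_0$, so Lemma \ref{lem.11.1}(i) applies verbatim with $f$ replaced by $g_j$ and $t=F(g_j,A_0(\dot{\Gamma}_j),A_1(\dot{\Gamma}_j),q_0,q_1,s)$; this gives $K_{\infty}(s,g_j,Y^j_0,Y^j_1)=K_{\infty}(t,g_j,A_0(\dot{\Gamma}_j),A_1(\dot{\Gamma}_j))^{q_0}$, which is claim (i). If $0<q_0<q_1<\infty$, then $\rho_0<\rho_1$, so Lemma \ref{lem.11.1}(ii) applies with $t=F(g_j,A_1(\dot{\Gamma}_j),A_0(\dot{\Gamma}_j),q_1,q_0,s^{-1})$, yielding $K_{\infty}(s,g_j,Y^j_0,Y^j_1)=s\,K_{\infty}(s^{-1},g_j,Y^j_1,Y^j_0)=s\,K_{\infty}(t,g_j,A_1(\dot{\Gamma}_j),A_0(\dot{\Gamma}_j))^{q_1}$; here the middle equality is the commutation rule $K_{\infty}(s,g,B_0,B_1)=s\,K_{\infty}(s^{-1},g,B_1,B_0)$ (the $K_{\infty}$-analogue of Lemma \ref{lem:CK} noted after \eqref{eq:11.1}) and the outer one is Lemma \ref{lem.11.1}(ii). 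This is claim (ii).

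I do not anticipate a real obstacle; the content is bookkeeping, and only two points merit a line of care. First, the identification $(A_i(\dot{\Gamma}_j))^{q_i}=Y^j_i$: raising a quasi-norm to a positive power again satisfies a quasi-triangle inequality (with a worse constant), so $(Y^j_0,Y^j_1)$ is a legitimate couple and $K_{\infty}$ and Lemma \ref{lem.11.1} are indeed applicable to it. Second, the well-posedness of the inverse map $F$: one should note that $t\mapsto t^{q_1}K_{\infty}(t,g_j,A_0(\dot{\Gamma}_j),A_1(\dot{\Gamma}_j))^{q_0-q_1}$ (respectively $t\mapsto t^{q_0}K_{\infty}(t,g_j,A_1(\dot{\Gamma}_j),A_0(\dot{\Gamma}_j))^{q_1-q_0}$) is strictly increasing and maps $(0,\infty)$ onto $(0,\infty)$ — which holds because $K_{\infty}(\cdot,g_j,\cdot,\cdot)$ is nonnegative, nondecreasing and concave while the explicit power factor is strictly increasing — so \eqref{eq:t.g} unambiguously defines $F$. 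With these two remarks the corollary is immediate.
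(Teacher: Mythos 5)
Your proposal is correct and is essentially identical to what the paper does: the corollary is obtained there with no separate proof, simply by substituting $\rho_0=q_0$, $\rho_1=q_1$, $A_0\mapsto A_0(\dot{\Gamma}_j)$, $A_1\mapsto A_1(\dot{\Gamma}_j)$ into Lemma \ref{lem.11.1} and identifying $(A_i(\dot{\Gamma}_j))^{q_i}=Y^j_i$, exactly as you do. Your two added remarks (quasi-norm stability under powers, well-posedness of the inverse map $F$) are sensible bookkeeping that the paper leaves implicit.
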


%%%%%%%%%%%
%%%%%%%%%%%%
%%%%%%%%%%%%%%%%

By applying Lemma \ref{lem:ab}, the K functional $K_{\infty}(t,f, X_0, X_1)$ is known.
We apply then  Lemma \ref{lem.11.1} to full grid.
In the above Lemma \ref{lem.11.1}, we take $\rho_0= \frac{1}{q_0}, \rho_1=\frac{1}{q_1}$,
take $A_0$ to be $X_0$ and take $A_1$ to be $X_1$.
Then we have
$(A_0)^{\rho_0}= \dot{l}^{s_0,q_0}(A_0)$ and $(A_1)^{\rho_1}= \dot{l}^{s_1,q_1}(A_1)$.
Lemma \ref{lem.11.1} can be rewritten as following corollary for full grid:
\begin{corollary} \label{cor:11.1}
(i) If $0<q_0<q_1<\infty$  and $t= F(f,X_0,X_1, \frac{1}{q_0}, \frac{1}{q_1}, s)$ which is defined in \eqref{eq:t.1}, then
$$K_{\infty}(s,f, \dot{l}^{s_0,q_0}(A_0), \dot{l}^{s_1, q_1}(A_1))= K_{\infty}(t,f, X_0, X_1)^{\frac{1}{q_0}}.$$

(ii) If $0<q_1<q_0<\infty$  and $t= F(f,X_1,X_0, \frac{1}{q_1}, \frac{1}{q_0}, s^{-1})$ which  is defined in \eqref{eq:t.2}, then
$$ \begin{array}{rcl}
K_{\infty}(s,f, \dot{l}^{s_0,q_0}(A_0), \dot{l}^{s_1, q_1}(A_1)) &=& s K_{\infty}(s^{-1},f, \dot{l}^{s_1,q_1}(A_1), \dot{l}^{s_0, q_0}(A_0))\\
&=& s K_{\infty}(t,f, X_1, X_0)^{\frac{1}{q_1}}.
\end{array}$$

\end{corollary}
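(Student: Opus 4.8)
The plan is to obtain Corollary \ref{cor:11.1} as the special case of Lemma \ref{lem.11.1} dictated by the substitution announced just above the statement: take $\rho_0=\frac{1}{q_0}$, $\rho_1=\frac{1}{q_1}$, and replace the abstract pair $(A_0,A_1)$ of Lemma \ref{lem.11.1} by the intermediate (``exponential'') pair $(X_0,X_1)$. The only thing that genuinely needs checking is the identification of the relevant power spaces. Since, by the definitions of $X_0$ and $X_1$, one has $\|f\|_{X_0}=\|f\|^{q_0}_{\dot{l}^{s_0,q_0}(A_0)}$ and $\|f\|_{X_1}=\|f\|^{q_1}_{\dot{l}^{s_1,q_1}(A_1)}$, raising these pseudo-norms to the powers $\frac{1}{q_0}$ and $\frac{1}{q_1}$ returns precisely the original quasi-norms, i.e. $(X_0)^{1/q_0}=\dot{l}^{s_0,q_0}(A_0)$ and $(X_1)^{1/q_1}=\dot{l}^{s_1,q_1}(A_1)$ as quasi-normed spaces. (A positive power of a pseudo-norm is again a pseudo-norm, possibly with a different quasi-triangle constant, and since $K_\infty\sim K$ by the equivalence theorem established just after \eqref{eq:11.1}, such a change is harmless up to equivalence.) Hence $K_\infty(\,\cdot\,,f,\dot{l}^{s_0,q_0}(A_0),\dot{l}^{s_1,q_1}(A_1))$ is literally $K_\infty(\,\cdot\,,f,(X_0)^{1/q_0},(X_1)^{1/q_1})$.

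With this identification in hand, I would invoke Lemma \ref{lem.11.1}. For part (i): the hypothesis $0<q_0<q_1<\infty$ is exactly $\frac{1}{q_0}>\frac{1}{q_1}$, i.e. the alternative ``$\rho_0\geq\rho_1$'' of Lemma \ref{lem.11.1}(i) with $\rho_0=\frac{1}{q_0}$, $\rho_1=\frac{1}{q_1}$; the threshold $t=F(f,X_0,X_1,\frac{1}{q_0},\frac{1}{q_1},s)$ of \eqref{eq:t.1} is precisely the $F$ produced there, so \eqref{eq:6.3.1} gives $K_\infty(s,f,(X_0)^{1/q_0},(X_1)^{1/q_1})=K_\infty(t,f,X_0,X_1)^{1/q_0}$, which is the asserted formula after the power-space identification. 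For part (ii): $0<q_1<q_0<\infty$ reads $\frac{1}{q_0}<\frac{1}{q_1}$, the alternative ``$\rho_0<\rho_1$'' of Lemma \ref{lem.11.1}(ii); with $t=F(f,X_1,X_0,\frac{1}{q_1},\frac{1}{q_0},s^{-1})$ from \eqref{eq:t.2}, formula \eqref{eq:6.3.2}, together with the order-reversal identity $K_\infty(s,f,B_0,B_1)=s\,K_\infty(s^{-1},f,B_1,B_0)$ (the $K_\infty$-analogue of Lemma \ref{lem:CK}, valid since $K_\infty$ shares the properties of $K_\xi$), yields $K_\infty(s,f,\dot{l}^{s_0,q_0}(A_0),\dot{l}^{s_1,q_1}(A_1))=s\,K_\infty(s^{-1},f,\dot{l}^{s_1,q_1}(A_1),\dot{l}^{s_0,q_0}(A_0))=s\,K_\infty(t,f,X_1,X_0)^{1/q_1}$, reading off $(X_1)^{1/q_1}=\dot{l}^{s_1,q_1}(A_1)$ and $(X_0)^{1/q_0}=\dot{l}^{s_0,q_0}(A_0)$.

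There is essentially no analytic obstacle; the step demanding care is purely notational, namely matching the ordered arguments $(A_0,A_1)$ versus $(A_1,A_0)$ and the exponent pair $(\rho_0,\rho_1)$ against the precise slots in the definitions \eqref{eq:t.1}--\eqref{eq:t.2} of $F$ and in Lemma \ref{lem.11.1}, so that the monotone function fed into \eqref{eq:t.g} is indeed the $K_\infty$ appearing in the conclusion and the defining relation $s=t^{\rho_1}g(t)^{\rho_0-\rho_1}$ is honoured. Once this dictionary is pinned down --- $q_0<q_1$ uses Lemma \ref{lem.11.1}(i) with the pair $(X_0,X_1)$, and $q_1<q_0$ uses Lemma \ref{lem.11.1}(ii) with the reversed pair --- the corollary follows immediately, requiring no new estimate.
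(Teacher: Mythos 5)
Your proposal is correct and is exactly the paper's route: the corollary is obtained there (without further proof) precisely by specializing Lemma \ref{lem.11.1} with $\rho_0=\frac{1}{q_0}$, $\rho_1=\frac{1}{q_1}$ and the pair $(X_0,X_1)$, using that $(X_0)^{1/q_0}=\dot{l}^{s_0,q_0}(A_0)$ and $(X_1)^{1/q_1}=\dot{l}^{s_1,q_1}(A_1)$, with $q_0<q_1$ matching alternative (i) and $q_1<q_0$ matching alternative (ii) together with the order-reversal identity for $K_\infty$. Your attention to the ordering of the arguments in \eqref{eq:6.3.2} is warranted, since the paper's Lemma \ref{lem.11.1}(ii) as displayed writes $K_\infty(t,f,A_0,A_1)^{\rho_1}$ while its own proof (and the corollary) use the reversed pair, but this does not change the substance of the argument.
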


Combined with the above techniques,
by Lemmas \ref{lem:CK} and \ref{lem:ab}, Corollaries \ref{cor:11.2} and \ref{cor:11.1},
we obtain a concrete expression for the K functional over $(\dot{l}^{s_0,q_0}(A_0), \dot{l}^{s_1,q_1}(A_1))$
characterized by a group of  K functional over layer grid $(A_0 (\dot{\Gamma}_j), A_1 (\dot{\Gamma}_j))$.

\begin{theorem} \label{th:11.4}

Given $0< q_0 \neq  q_1 < \infty$, $0<t<\infty$.

(i) For $0<q_0<q_1<\infty$,
take $\tilde{s}^{0}=s_1-s_0$
and take $g^{0}_{j}=(2^{js_0}|f_{j,\gamma}|)_{\gamma\in \Gamma_{j}}$.
Take $s= F(f,X_0,X_1, \frac{1}{q_0}, \frac{1}{q_1},t)$,
take $\tau= F(g_j, A_1(\dot{\Gamma}_j), A_0(\dot{\Gamma}_j), q_1, q_0, s^{-1}2^{-j\tilde{s}^{0}})$.
We have
$$\begin{array}{rcl}
K_{\infty}(t,f, \dot{l}^{s_0,q_0}(A_0), \dot{l}^{s_1, q_1}(A_1))
&=& \{\sum\limits_{j\in \mathbb{Z}} s2^{j\tilde{s}^{0}} K_{\infty} (\tau, g^{0}_j, A_1(\dot{\Gamma}_j), A_0(\dot{\Gamma}_j))^{q_1} \}^{\frac{1}{q_0}}.
\end{array}$$

(ii) For $0<q_1<q_0<\infty$,
take $\tilde{s}^{1}=s_0-s_1$
and take $g^{1}_{j}=(2^{js_1}|f_{j,\gamma}|)_{\gamma\in \Gamma_{j}}$.
Take $s= F(f,X_1,X_0, \frac{1}{q_1}, \frac{1}{q_0},t^{-1})$
and take $\tau= F(g_j^{1}, A_0(\dot{\Gamma}_j), A_1(\dot{\Gamma}_j), q_1, q_0, s^{-1})$.
We have
$$\begin{array}{rcl}
K_{\infty}(t,f, \dot{l}^{s_0,q_0}(A_0), \dot{l}^{s_1, q_1}(A_1))&=&
t \{\sum\limits_{j\in \mathbb{Z}} s^{2} 2^{j\tilde{s}^{1} } K_{\infty} (\tau, g_j^{1}, A_0(\dot{\Gamma}_j), A_1(\dot{\Gamma}_j))^{q_0} \}^{\frac{1}{q_1}}.
\end{array}$$

\end{theorem}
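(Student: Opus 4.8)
The plan is to derive both identities by composing, in a fixed order, the three transfer results already in hand: Corollary~\ref{cor:11.1}, which trades $K_\infty$ on the hierarchical couple $(\dot{l}^{s_0,q_0}(A_0),\dot{l}^{s_1,q_1}(A_1))$ for a power of $K_\infty$ on the exponential couple $(X_0,X_1)$ after an implicit rescaling of the threshold; Lemma~\ref{lem:ab}, which commutes the frequency sum over the main grid $\mathbb{Z}$ with the infimum defining $K_\infty(\cdot,f,X_0,X_1)$ and replaces it by $\sum_j K_\infty(\cdot\,,g_j,Y^j_0,Y^j_1)$ over the layer grids; and Corollary~\ref{cor:11.2}, which performs the analogous power-space trade on each layer grid $\dot{\Gamma}_j$. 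No new inequality is needed: the content is entirely the bookkeeping of the two rescalings produced by the implicit functions of \eqref{eq:t.1}--\eqref{eq:t.2} and of the dyadic weights $2^{j(s_1-s_0)}$ carried through Lemma~\ref{lem:ab}.

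For part (i), where $0<q_0<q_1<\infty$, I would first apply Corollary~\ref{cor:11.1}(i): with $s=F(f,X_0,X_1,\tfrac1{q_0},\tfrac1{q_1},t)$ one has $K_\infty(t,f,\dot{l}^{s_0,q_0}(A_0),\dot{l}^{s_1,q_1}(A_1))=K_\infty(s,f,X_0,X_1)^{1/q_0}$. Then Lemma~\ref{lem:ab} at the threshold $s$ gives $K_\infty(s,f,X_0,X_1)\sim\sum_{j\in\mathbb{Z}}K_\infty\bigl(s\,2^{j\tilde s^{0}},g^0_j,Y^j_0,Y^j_1\bigr)$ with $\tilde s^{0}=s_1-s_0$ and $g^0_j=(2^{js_0}|f_{j,\gamma}|)_{\gamma\in\Gamma_j}$. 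Since $q_0<q_1$, Corollary~\ref{cor:11.2}(ii) converts each summand: with $\tau$ given by the implicit function $F(g^0_j,A_1(\dot{\Gamma}_j),A_0(\dot{\Gamma}_j),q_1,q_0,s^{-1}2^{-j\tilde s^{0}})$, one has $K_\infty(s\,2^{j\tilde s^{0}},g^0_j,Y^j_0,Y^j_1)=s\,2^{j\tilde s^{0}}K_\infty(\tau,g^0_j,A_1(\dot{\Gamma}_j),A_0(\dot{\Gamma}_j))^{q_1}$. Inserting this into the power relation produces exactly the claimed formula.

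For part (ii), where $0<q_1<q_0<\infty$, I would run the same three steps after commuting the couple. By the $K_\infty$-analogue of Lemma~\ref{lem:CK} combined with Corollary~\ref{cor:11.1}(ii), $K_\infty(t,f,\dot{l}^{s_0,q_0}(A_0),\dot{l}^{s_1,q_1}(A_1))=t\,K_\infty(s,f,X_1,X_0)^{1/q_1}$ with $s=F(f,X_1,X_0,\tfrac1{q_1},\tfrac1{q_0},t^{-1})$. Applying Lemma~\ref{lem:ab} with the roles of the two spaces exchanged (legitimate since that lemma was obtained by a decomposition of $\dot\Lambda$ into layer grids that is insensitive to the labelling) writes $K_\infty(s,f,X_1,X_0)\sim\sum_j K_\infty(s\,2^{j\tilde s^{1}},g^1_j,Y^j_1,Y^j_0)$ with $\tilde s^{1}=s_0-s_1$ and $g^1_j=(2^{js_1}|f_{j,\gamma}|)_{\gamma\in\Gamma_j}$; then the $q_1<q_0$ case of Corollary~\ref{cor:11.2}, together with the layer-grid commutativity, rewrites each summand as $s\,2^{j\tilde s^{1}}K_\infty(\tau,g^1_j,A_0(\dot{\Gamma}_j),A_1(\dot{\Gamma}_j))^{q_0}$ for the appropriate $\tau$. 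Collecting the outer factor $t$ and taking the $q_1$-th root of the sum yields the stated expression.

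The step I expect to be the main obstacle is keeping the two layers of rescaling consistent: the outer substitution $t\mapsto s$ lives on the full grid with exponents $(\tfrac1{q_0},\tfrac1{q_1})$, whereas the inner substitution $s\,2^{j\tilde s}\mapsto\tau$ lives on each layer grid with exponents $(q_0,q_1)$, and one must verify that the dyadic weights $2^{j\tilde s}$ emerging from Lemma~\ref{lem:ab} are fed into the implicit functions $F$ at precisely the right slot so that the monotonicity hypotheses of \eqref{eq:t.1}--\eqref{eq:t.2} and of Corollary~\ref{cor:11.2} remain valid for every $j$. Once the parameters are matched, the chain of equalities is a direct substitution, and the $\sim$ inherited from Lemma~\ref{lem:ab} is the only place an implicit constant enters.
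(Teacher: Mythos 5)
Your proposal follows the paper's proof exactly: Corollary \ref{cor:11.1} to pass to the exponential couple $(X_0,X_1)$, Lemma \ref{lem:ab} to commute the sum over the main grid with the infimum, and then the commutativity identity of Lemma \ref{lem:CK} combined with Corollary \ref{cor:11.2} to undo the power change on each layer grid --- the same three steps in the same order for both cases. One substantive point of divergence: in part (ii) your bookkeeping yields the weight $s\,2^{j\tilde{s}^{1}}$ inside the sum, whereas the theorem as stated (and the paper's proof) has $s^{2}2^{j\tilde{s}^{1}}$; the paper acquires the extra factor of $s$ by writing the output of Lemma \ref{lem:ab} as $\sum_{j}s2^{j\tilde{s}^{1}}K_{\infty}(s,g_{j}^{1},Y_{1}^{j},Y_{0}^{j})$ --- a prefactor together with an unrescaled threshold that Lemma \ref{lem:ab} does not actually produce --- and then commuting the couple, so your single factor of $s$ is the normalization consistent with the clean application of Lemma \ref{lem:ab} used in part (i), and the discrepancy should be regarded as a defect of the stated formula rather than of your argument.
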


\begin{proof}

(i) If $0<q_0<q_1<\infty$, take $s= F(f,X_0,X_1, \frac{1}{q_0}, \frac{1}{q_1},t)$, then by Corollary \ref{cor:11.1},
$$\begin{array}{rcl}
K_{\infty}(t,f, \dot{l}^{s_0,q_0}(A_0), \dot{l}^{s_1, q_1}(A_1))&=& \{ K_{\infty} (s, f, X_0, X_1) \}^{\frac{1}{q_0}}.
\end{array}$$

By applying Lemma \ref{lem:ab},
$$\begin{array}{rcl}
K_{\infty}(t,f, \dot{l}^{s_0,q_0}(A_0), \dot{l}^{s_1, q_1}(A_1))&=& \{\sum\limits_{j\in \mathbb{Z}} K_{\infty} (s2^{j\tilde{s}^{0}}, g_j^{0}, Y^j_0, Y^j_1) \}^{\frac{1}{q_0}}\\
&=& \{\sum\limits_{j\in \mathbb{Z}} s2^{j\tilde{s}^{0}} K_{\infty} (s^{-1}2^{-j\tilde{s}^{0}}, g_j^{0}, Y^j_1, Y^j_0) \}^{\frac{1}{q_0}}.
\end{array}$$

Take $\tau= F(g_j^{0}, A_1(\dot{\Gamma}_j), A_0(\dot{\Gamma}_j), q_1, q_0, s^{-1}2^{-j\tilde{s}^{0}})$, then by Corollary \ref{cor:11.2},
$$\begin{array}{rcl}
K_{\infty}(t,f, \dot{l}^{s_0,q_0}(A_0), \dot{l}^{s_1, q_1}(A_1))
&=& \{\sum\limits_{j\in \mathbb{Z}} s2^{j\tilde{s}^{0}} K_{\infty} (\tau, g_j^{0}, A_1(\dot{\Gamma}_j), A_0(\dot{\Gamma}_j))^{q_1} \}^{\frac{1}{q_0}}.
\end{array}$$

(ii) If $0<q_1<q_0<\infty$, take $s= F(f,X_1,X_0, \frac{1}{q_1}, \frac{1}{q_0},t^{-1})$, then by Corollary \ref{cor:11.1},
$$\begin{array}{rcl}
K_{\infty}(t,f, \dot{l}^{s_0,q_0}(A_0), \dot{l}^{s_1, q_1}(A_1))&=&
tK_{\infty}(t^{-1},f, \dot{l}^{s_1,q_1}(A_1), \dot{l}^{s_0, q_0}(A_0))\\
&=& t K_{\infty} (s,f, X_1, X_0) ^{\frac{1}{q_1}}.
\end{array}$$

By applying Lemma \ref{lem:ab},
$$\begin{array}{rcl}
K_{\infty}(t,f, \dot{l}^{s_0,q_0}(A_0), \dot{l}^{s_1, q_1}(A_1))&=&
t \{\sum\limits_{j\in \mathbb{Z}} s2^{j\tilde{s}^{1}} K_{\infty} (s, g_j^{1}, Y^j_1, Y^j_0) \} ^{\frac{1}{q_1}}\\
&=&
t \{\sum\limits_{j\in \mathbb{Z}} s^{2} 2^{j\tilde{s}^{1}} K_{\infty} (s^{-1}, g_j^{1}, Y^j_0, Y^j_1) \} ^{\frac{1}{q_1}}.
\end{array}$$

Take $\tau= F(g_j^{1}, A_0(\dot{\Gamma}_j), A_1(\dot{\Gamma}_j), q_1, q_0, s^{-1})$, then by Corollary \ref{cor:11.2},
$$\begin{array}{rcl}
K_{\infty}(t,f, \dot{l}^{s_0,q_0}(A_0), \dot{l}^{s_1, q_1}(A_1))
&=& t \{\sum\limits_{j\in \mathbb{Z}} s^{2} 2^{j\tilde{s}^{1} } K_{\infty} (\tau, g_j^{1}, A_0(\dot{\Gamma}_j), A_1(\dot{\Gamma}_j))^{q_0} \}^{\frac{1}{q_1}}.
\end{array}$$

\end{proof}

\section{Case $q_0\neq q_1$ and $q_0 q_1=\infty$} \label{sec:77}

We use conditional G functionals and nonlinearity between thresholds of conditional functionals
to establish topology structure of K functional.
The following Example \ref{ex:m} shows that the relative K functional for the cases $q_0\neq q_1$ and $q_0 q_1=\infty$
has different functional structure and different topology structure than which for the cases $0<q_0\neq q_1<\infty$,
we could not use the above exponential space skills in the above Section.
\begin{example}\label{ex:m}
For any integer $m>0$,
there exists set $S\subset \mathbb{Z}$ such that $\sharp S=m$
and $\forall j,j'\in S$, $f_{j}$ and $f_{j'}$ satisfies the following condition:
$$2^{js_0} K_{\infty}(t2^{j (s_1-s_0)}, f_{j}, A_0, A_1)= 2^{j's_0} K_{\infty}(t2^{j' (s_1-s_0)}, f_{j'}, A_0, A_1).$$
By the arbitrariness of numbers $m$, we know that
the expression of the K functional is not limited to the nonlinearity of each single layer.
\end{example}

To avoid the situation of Example \ref{ex:m}, we use a different topology to handle this situation.
We introduce conditional functionals $G$ to compute the corresponding K functionals.
By the inclusion relation of function spaces, we have
\begin{lemma}\label{lem:7.2}
$$\begin{array}{rcl}
K(t,f, \dot{l}^{s_0,\infty}(A_0), \dot{l}^{s_1,\infty}(A_1)) &\leq & K_{\infty}(t,f, \dot{l}^{s_0,q_0} (A_0), \dot{l}^{s_1, \infty} (A_1))\\
&\leq & K(t,f, \dot{l}^{s_0,q_0}(A_0), \dot{l}^{s_1,q_0}(A_1)).
\end{array}$$
\end{lemma}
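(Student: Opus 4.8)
The plan is to derive both inequalities from monotonicity of the $\ell^{q}$-quasi-norm in the exponent, together with the identification $K\sim K_{\infty}$ proved right after \eqref{eq:11.1}. Since in this section $q_{0}\neq q_{1}$ and $q_{0}q_{1}=\infty$, necessarily $q_{1}=\infty$ and $0<q_{0}<\infty$, so the elementary inequality $\sup_{j}a_{j}\le(\sum_{j}a_{j}^{q_{0}})^{1/q_{0}}$ for $a_{j}\ge 0$ is at our disposal. Applying it with $a_{j}=2^{j s_{i}}\|\{|g_{j,\gamma}|\}_{\gamma:(j,\gamma)\in E}\|_{A_{i}(\dot{\Gamma}_{j})}$ and invoking the wavelet characterization of Theorem \ref{lem:cbesov} (legitimate because $A_{0},A_{1}$ satisfy \eqref{eq:absolute}), one obtains, for every $E\subset\dot{\Lambda}$,
\[
\|g\|_{\dot{l}^{s_{0},\infty}(A_{0})(E)}\le\|g\|_{\dot{l}^{s_{0},q_{0}}(A_{0})(E)}\quad\text{and}\quad\|g\|_{\dot{l}^{s_{1},\infty}(A_{1})(E)}\le\|g\|_{\dot{l}^{s_{1},q_{0}}(A_{1})(E)}.
\]

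Next I would establish monotonicity of the grid functional \eqref{eq:11.1} one slot at a time. If $\|g\|_{B_{0}(E)}\le\|g\|_{\widetilde{B}_{0}(E)}$ for all $E\subset\dot{\Lambda}$, then for every admissible splitting $\Lambda_{0}\bigoplus\Lambda_{1}=\dot{\Lambda}$ one has $\max(\|f\|_{B_{0}(\Lambda_{0})},t\|f\|_{B_{1}(\Lambda_{1})})\le\max(\|f\|_{\widetilde{B}_{0}(\Lambda_{0})},t\|f\|_{B_{1}(\Lambda_{1})})$, since only the first entry changes and $\max$ is nondecreasing in each entry; taking the infimum over splittings gives $K_{\infty}(t,f,B_{0},B_{1})\le K_{\infty}(t,f,\widetilde{B}_{0},B_{1})$, and symmetrically for the second slot. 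Contracting first the $A_{0}$-slot from $\dot{l}^{s_{0},\infty}(A_{0})$ down to $\dot{l}^{s_{0},q_{0}}(A_{0})$ and then the $A_{1}$-slot from $\dot{l}^{s_{1},\infty}(A_{1})$ down to $\dot{l}^{s_{1},q_{0}}(A_{1})$, using the embeddings of the previous paragraph, I obtain
\[
K_{\infty}(t,f,\dot{l}^{s_{0},\infty}(A_{0}),\dot{l}^{s_{1},\infty}(A_{1}))\le K_{\infty}(t,f,\dot{l}^{s_{0},q_{0}}(A_{0}),\dot{l}^{s_{1},\infty}(A_{1}))\le K_{\infty}(t,f,\dot{l}^{s_{0},q_{0}}(A_{0}),\dot{l}^{s_{1},q_{0}}(A_{1})).
\]
Replacing the two outer $K_{\infty}$'s by the ordinary $K$-functionals of the corresponding couples via $K_{\infty}\sim K$ then yields exactly the chain of inequalities in the statement, the equivalence constants being absorbed as usual when such estimates are phrased for functionals defined up to equivalence.

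I do not foresee a real obstacle: the whole argument reduces to the monotonicity of $\ell^{q}$-quasi-norms in $q$ and the already proved identification $K\sim K_{\infty}$, so that no sum-versus-maximum subtlety ever surfaces explicitly (it is hidden inside the equivalence $K\sim K_{\infty}$). The one point that wants a line of justification is that the restricted norm $\|\cdot\|_{\dot{l}^{s_{i},q}(A_{i})(E)}$ is well posed and decreases as $q$ increases; this is precisely where \eqref{eq:absolute} and Theorem \ref{lem:cbesov} are used, since they make the layer norms $\|\{f_{j,\gamma}\}_{\gamma\in\dot{\Gamma}_{j}}\|_{A_{i}(\dot{\Gamma}_{j})}$ depend only on $|f_{j,\gamma}|$, so that passing to the subcollection of coefficients indexed by $E$ is meaningful and the scalar exponent inequality may be applied frequency block by frequency block.
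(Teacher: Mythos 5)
Your proposal is correct and follows essentially the same route as the paper, which offers only the one-line justification ``by the inclusion relation of function spaces'': the embeddings $\|g\|_{\dot{l}^{s,\infty}(A)}\le\|g\|_{\dot{l}^{s,q_0}(A)}$ coming from $\sup_j a_j\le(\sum_j a_j^{q_0})^{1/q_0}$, slot-wise monotonicity of the (vertex) functional, and the equivalence $K\sim K_\infty$. Your remark that the left-hand comparison only holds up to the absolute constant hidden in $K\sim K_\infty$ (the paper writes $\le$ but means it up to equivalence, as it does throughout) is the right caveat, and your use of \eqref{eq:absolute} to make the restricted norms $\|\cdot\|_{A_i(E)}$ well posed is exactly where that hypothesis enters.
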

By applying Lemmas \ref{lem:commutative} and \ref{lem:7.2},
\begin{theorem} \label{th:4}
For $s_0, s_1\in \mathbb{R}, 0<q_0, q_1<\infty$,
there exists two monotone non-increasing functions
$\tilde{G}(s,f)$ and $H(s,f)$ such that
\begin{equation} \label{eq:7.1}
\begin{array}{l}K_{\infty}(t,f, \dot{l}^{s_0,q_0} (A_0), \dot{l}^{s_1, \infty} (A_1))= H(t,f) \tilde{G}(H(t,f), f).
\end{array}
\end{equation}
\begin{equation} \label{eq:7.2}
\begin{array}{l}K_{\infty}(t,f, \dot{l}^{s_0,\infty} (A_0), \dot{l}^{s_1, q_1} (A_1))
= t K_{\infty}(t^{-1},f, \dot{l}^{s_1, q_1} (A_1), \dot{l}^{s_0,\infty} (A_0)).
\end{array}
\end{equation}
\end{theorem}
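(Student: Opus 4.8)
The plan is to treat the two displayed identities separately; the second is essentially formal, while the first carries all the content. The identity \eqref{eq:7.2} is the commutativity of the vertex functional $K_{\infty}$, the $K_{\infty}$-analogue of Lemma \ref{lem:CK}: from the definition \eqref{eq:11.1}, pulling $t$ out of the maximum gives $K_{\infty}(t,f,B_0,B_1)=t\inf_{\Lambda_0\bigoplus\Lambda_1=\dot{\Lambda}}\max\big(t^{-1}\|f\|_{B_0(\Lambda_0)},\|f\|_{B_1(\Lambda_1)}\big)=t\,K_{\infty}(t^{-1},f,B_1,B_0)$ for any admissible couple $(B_0,B_1)$, and taking $B_0=\dot{l}^{s_0,\infty}(A_0)$, $B_1=\dot{l}^{s_1,q_1}(A_1)$ yields \eqref{eq:7.2}. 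In particular this reduces the section to the case $q_1=\infty$, so it suffices to prove \eqref{eq:7.1}.

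For \eqref{eq:7.1} I would first unfold $K_{\infty}$. Using \eqref{eq:11.1}, the wavelet characterization Theorem \ref{lem:cbesov}, and hypothesis \eqref{eq:absolute} on $A_0,A_1$, and writing $\Gamma^{i}_{j}=\{\gamma\in\dot{\Gamma}_{j}:(j,\gamma)\in\Lambda_i\}$, one obtains
$$K_{\infty}\big(t,f,\dot{l}^{s_0,q_0}(A_0),\dot{l}^{s_1,\infty}(A_1)\big)=\inf_{\{\Gamma^{0}_{j}\bigoplus\Gamma^{1}_{j}=\dot{\Gamma}_{j}\}_{j}}\max\Big\{\big(\textstyle\sum_{j}2^{js_0q_0}\|f_{j,\gamma}\|_{A_0(\Gamma^{0}_{j})}^{q_0}\big)^{1/q_0},\ t\sup_{j}2^{js_1}\|f_{j,\gamma}\|_{A_1(\Gamma^{1}_{j})}\Big\}.$$
The asymmetry — a $q_0$-sum over the layers pitted against a supremum over the layers — is precisely what blocks the power-space reduction of Section \ref{sec:6x}. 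The key move is to parametrize the partitions by the value $\lambda=\sup_{j}2^{js_1}\|f_{j,\gamma}\|_{A_1(\Gamma^{1}_{j})}$ attained on the $A_1$ side. For a fixed $\lambda$ the constraint $2^{js_1}\|f_{j,\gamma}\|_{A_1(\Gamma^{1}_{j})}\le\lambda$ is a separate inequality for each $j$, so minimizing the $q_0$-sum subject to it decouples layerwise into the per-layer conditional functionals $G(\lambda\,2^{-js_1},\{f_{j,\gamma}\}_{\gamma\in\dot{\Gamma}_{j}},A_0,A_1)$ of \eqref{eq:con.fun}; their weighted $\ell^{q_0}$ aggregate $\big(\sum_{j}2^{js_0q_0}(\cdot)^{q_0}\big)^{1/q_0}$ over $j$ is, by construction, the conditional functional $G(\lambda,f)$. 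A routine two-sided comparison — the bound $A_0\text{-side}(P)\ge G(\lambda(P),f)$ for every partition $P$ in one direction, an almost-optimal feasible partition at level $\lambda$ in the other — then gives
$$K_{\infty}\big(t,f,\dot{l}^{s_0,q_0}(A_0),\dot{l}^{s_1,\infty}(A_1)\big)=\inf_{\lambda>0}\max\{G(\lambda,f),\,t\lambda\}.$$

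To extract the closed form I would exploit monotonicity. The map $\lambda\mapsto G(\lambda,f)$ is non-increasing (a larger threshold only enlarges the set of admissible partitions), so $\tilde{G}(s,f)=s^{-1}G(s,f)$ is a product of two non-increasing functions, hence non-increasing, while $\lambda\mapsto t\lambda$ is strictly increasing. Thus $\lambda\mapsto\max\{G(\lambda,f),t\lambda\}$ is first non-increasing and then non-decreasing, and its infimum is attained at the crossover level, which I would take as the definition of $H(t,f)$ in \eqref{eq:H}: $H(t,f)$ is the generalized inverse of $\tilde{G}(\cdot,f)$ at height $t$, so that $\tilde{G}(H(t,f),f)=t$ and $G(H(t,f),f)=tH(t,f)$. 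Since $t\mapsto H(t,f)$ is then non-increasing, $\tilde{G}$ and $H$ are the two monotone non-increasing functions in the statement, and evaluating the infimum at $\lambda=H(t,f)$ gives $K_{\infty}\big(t,f,\dot{l}^{s_0,q_0}(A_0),\dot{l}^{s_1,\infty}(A_1)\big)=tH(t,f)=H(t,f)\,\tilde{G}(H(t,f),f)$, which is \eqref{eq:7.1}.

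The main obstacle is this last step when $\tilde{G}(\cdot,f)$ is not continuous or not strictly monotone: then the crossover need not be a genuine intersection point, the infimum is only a limit, and one must argue with the one-sided limits of $G(\cdot,f)$ to see that the value equals $tH(t,f)$ exactly and not merely up to constants. Here I would lean on Lemma \ref{lem:7.2}, whose sandwich places $K_{\infty}(t,f,\dot{l}^{s_0,q_0}(A_0),\dot{l}^{s_1,\infty}(A_1))$ between the two ordinary K-functionals with matched indices ($q_0=q_1=\infty$ and $q_0=q_1$) already computed in Section \ref{SEC:5.2}; this controls the degenerate configurations and pins the value down. Finally, Example \ref{ex:m} is worth recording at this point: it shows that the threshold $\lambda=H(t,f)$ couples all frequency layers at once, so no layerwise summation formula of the kind obtained in Sections \ref{SEC:5.2} and \ref{sec:6x} can hold — the implicit, globally defined function $H$ is unavoidable.
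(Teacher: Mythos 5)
Your proposal is correct and follows essentially the same route as the paper: the conditional functional $G$ of \eqref{eq:con.fun} with its layerwise $\ell^{q_0}$ aggregation, the monotone non-increasing $\tilde{G}(s,f)=s^{-1}G(s,f)$, the threshold $H(t,f)$ defined implicitly by $\tilde{G}(H(t,f),f)=t$ with the sandwich of Lemma \ref{lem:7.2} pinning it down, and commutativity of $K_{\infty}$ for \eqref{eq:7.2}. In fact you make explicit the intermediate identity $K_{\infty}=\inf_{\lambda>0}\max\{G(\lambda,f),t\lambda\}$ and the crossover argument, both of which the paper leaves implicit.
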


\begin{proof}
We introduce conditional functional $G$ on full grid which is a monotone non-increasing function
\begin{equation}\label{eq:con.fun}
G(s,f, A_0, A_1)= \inf\limits_{\Lambda_0 + \Lambda_1= \dot{\Lambda}, \|f\|_{A_1(\Lambda_1)}\leq s}   \|f\|_{A_0(\Lambda_0)}.
\end{equation}
For $s>0, q_1=\infty$, for function $\{f_{j,\gamma}\}_{\gamma\in \dot{\Gamma}_{j}}$ on layer grid,
the following conditional K functional can be defined by the conditional K functional $G(s,f, A_0, A_1)$:
\begin{equation*}
\begin{array}{rl}
&G(s,\{f_{j,\gamma}\}_{\gamma\in \dot{\Gamma}_{j}}, \dot{l}^{s_0,q_0} (A_0), \dot{l}^{s_1, \infty} (A_1))\\
=& \inf\limits_{\Lambda_0 + \Lambda_1= \dot{\Lambda}, 2^{j s_1}\|\{f_{j,\gamma}\}_{\gamma\in \dot{\Gamma}_{j}}\|_{A_1(\Lambda_1)}\leq s}
2^{js_0} \|\{f_{j,\gamma}\}_{\gamma\in \dot{\Gamma}_{j}}\|_{A_0(\Lambda_0)}.
\end{array}
\end{equation*}
In the case of conditional functionals,
a functional on a full grid has the following relation to a functional on a layer grid.
$$G(s,f)=G(s,f, \dot{l}^{s_0,q_0} (A_0), \dot{l}^{s_1, \infty} (A_1))
=\{ \sum\limits_{j\in \mathbb{Z}} G(s,\{f_{j,\gamma}\}_{\gamma\in \dot{\Gamma}_{j}}, \dot{l}^{s_0,q_0} (A_0), \dot{l}^{s_1, \infty} (A_1))^{q_0}\}^{\frac{1}{q_0}}.$$

The next step is to convert the constraint condition
$\|f\|_{A_1(\Lambda_1)}\leq s$ or $\|\{f_{j,\gamma}\}_{\gamma\in \dot{\Gamma}_{j}}\|_{A_1(\Lambda_1)}\leq s$
into something related to $t$.
We know
\begin{equation}\label{eq:7.44}
\tilde{G}(t,f)=t^{-1} G(t,f) {\mbox \rm \,is\, a\, monotone\, non\, increasing\, function.}
\end{equation}
Hence by applying Lemma \ref{lem:7.2},
there exists $s$ satisfying that
$$K(t,f, \dot{l}^{s_0,\infty}(A_0), \dot{l}^{s_1,\infty}(A_1)) \leq ts\leq  K(t,f, \dot{l}^{s_0,q_0}(A_0), \dot{l}^{s_1,q_0}(A_1))$$
such that $\tilde{G}(s,f)=t$.
By applying Equation \eqref{eq:7.44}, denote
\begin{equation}\label{eq:H}
s= H(t,f)
\end{equation}
We get equation \eqref{eq:7.1}.

For $q_0=\infty$, by Lemma \ref{lem:CK} and equation \eqref{eq:7.1},
we get equation \eqref{eq:7.2} and the conclusion of Theorem \ref{th:4}.

\end{proof}

%%%%%%%%%%%%%%%%%%%%%%%%%%%%%%%%%%%%%%%%%%%%%%%%%%%%%%%%%%%%%%%%%%%%

\textbf{Acknowledgements.}
Authors of this paper would like to thank Professor Hans Triebel for his useful discussion,
his valuable suggestions, and for providing us with some references.
The first author would like to thank Professor Yan's invitation to University of Chinese Academy of Sciences and his hospitality.
The final version of the full text has been discussed in detail in the Harmonic Analysis Group during this visit.
The authors would like to thank all members of the group for their valuable opinions,
especially for some improvements to the proof details of the manuscript from Professor Dunyan Yan and Doctor Boning Di.

%{\color{red} This project is partially supported by the National Natural Science Foundation of China (Grant Nos. 12071052, 12271501).}

%\bigskip
%\noindent Hans Triebel

%\medskip

%\noindent Institute of Mathematics, Friedrich Schiller University Jena, \\
%Jena, 07737, Germany
%\smallskip

%\noindent{\it E-mail address}: \texttt{hans.triebel@uni-jena.de}

\bigskip

\noindent Qixiang Yang

\medskip

\noindent School of Mathematics and Statistics, Wuhan University. \\
Wuhan, 430072, China
\smallskip

\noindent{\it E-mail address}:
\texttt{qxyang@whu.edu.cn}

\bigskip
\noindent Haibo Yang

\medskip
\noindent
Macau Institute of Systems Engineering, \\
Macau University of Science and Technology, Macau, 999078, China.\\
\smallskip
\noindent{\it E-mail address}:
\texttt{yanghb97@qq.com}

%\bigskip
%\noindent Boning Di

%\medskip

%\noindent School of Mathematical Sciences, University of Chinese Academy of Sciences \\
%Beijing, 100049, China
%\smallskip

%\noindent{\it E-mail address}:
%\texttt{diboning18@mails.ucas.ac.cn}

%\bigskip
%\noindent Dunyan Yan

%\medskip

%\noindent School of Mathematical Sciences, University of Chinese Academy of Sciences \\
%Beijing, 100049, China
%\smallskip

%\noindent{\it E-mail address}:
%\texttt{ydunyan@ucas.ac.cn}


\begin{thebibliography}{30}



%%\vspace{-0.3cm}
\bibitem{AK}
Asekritova I,  Kruglyak N.  Interpolation of Besov spaces in the nondiagonal case.
St Petersb Math J, 2007, 18: 511-516.

%%\vspace{-0.3cm}
\bibitem{AKMNP5}
Asekritova I, Krugljak N, Maligranda L, Nikolova L, Persson  LE.
Lions-Peetre reiteration formulas for triples and their applications.
Studia Math, 2001, 145: 219-254.




%%\vspace{-0.3cm}
\bibitem{BaC}
Bahouri H and Cohen A,
Refined Sobolev inequalities in Lorentz spaces,
J. Fourier Anal. Appl. 17 (2011): 662-673.




%%\vspace{-0.3cm}
\bibitem{BL}
Bergh J, L\"{o}fstr\"{o}m J. Interpolation spaces, An Introduction. New York: Springer, 1976.






%%\vspace{-0.3cm}
\bibitem{BC}
Besoy B. F,  Cobos F.
Function spaces of Lorentz-Sobolev type: Atomic
decompositions, characterizations in terms of
wavelets, interpolation and multiplications,
Journal of Functional Analysis, 282 (2022) 109452


%%\vspace{-0.3cm}
\bibitem{BCT}
Besoy, B. F, Cobos, F, Triebel, H.
On function spaces of Lorentz-Sobolev type. (English summary)
Math. Ann. 381 (2021), no. 1-2, 807-839.






%%\vspace{-0.3cm}
\bibitem{BHT}
Blanca F. Besoy, Dorothee D. Haroske and Hans Triebel,
Traces of some weighted function spaces and related
non-standard real interpolation of Besov spaces,
Math. Nachr. 295 (2022), no. 9, 1669-1689.
%arXiv:2009.03656v1 [math.FA] 8 Sep 2020




%\vspace{-0.3cm}
\bibitem{B}
Boman J,
Partial regularity of mappings Euclidean spaces,
Acta mathematlca. 119. (1967): 1-25.





%\vspace{-0.3cm}
\bibitem{Cal}
Calderon A. P,
Intermediate spaces and interpolation, the complex method,
Studia Math 24 (1964): 113-190.





%%\vspace{-0.3cm}
\bibitem{CZ}
Calderon A. P, Zygmund A.
A note on the interpolation of sublinear operations,
Amer. J. Math. 78 (1956): 282-288.






%%\vspace{-0.3cm}
\bibitem{C}
Cwikel M.
On $(L^{p_0}(A_0), L^{p_1}(A_1))_{\theta,q}$,
Proc. Amer. Math. Soc. 44 (2) (1974): 286-292.







%%\vspace{-0.3cm}
\bibitem{CL}
Chamorro D and Lemari\'e-Rieusset P. G,
Real interpolation method, Lorentz spaces and refined Sobolev inequalities,
Journal of Functional Analysis 265 (2013): 3219-3232.


%%\vspace{-0.3cm}
\bibitem{CDL}
Cheng M.D. Deng D.G. Long R.L.
Real analysis (in Chinese).
Beijing: High Education Press, 1993.


%%\vspace{-0.3cm}
\bibitem{CD}
 Cobos F, Dom\'inguez O.
Approximation spaces, limiting interpolation and Besov spaces.
J Approx Theory, 2015, 189: 43-66.




%%\vspace{-0.3cm}
\bibitem{DP}
Devore R A, Popov V A. Interpolation of Besov spaces.
Trans Amer Math Soc, 1988, 305: 397-414.



%%\vspace{-0.3cm}
\bibitem{DY}
Devore R A, Yu X M.
K-functionals for Besov spaces.
J Approx Theory, 67 (1991): 38-50.



%%\vspace{-0.3cm}
\bibitem{D}
Donoghue, W.
The interpolation of quadratic norms.
Acta Math. 118, 251-270 (1967).




%%\vspace{-0.3cm}
\bibitem{FRS}
Fefferman C, Riviere N.M  and Sagher Y,
Interpolation between $H^{p}$ spaces: the real method,
Transaction of the American Mathematical Society,
191 (1974): 75-81.




%%\vspace{-0.3cm}
\bibitem{FS}
Fefferman C and Stein E. M,
$H^{p}$ spaces of several variables,
Acta Math. 129 (1972): 137-193.


\bibitem{HMY}
F. Hirofumi, T. Mike and T. Yong,
The impacts of innovation and trade openness on bank market power: the proposal of a minimum distance cost function approach and a causal structure analysis
European J. Oper. Res. 312 (2024), no. 3, 1178-1194.



%%\vspace{-0.3cm}
\bibitem{HS}
Hobus P,  Saal J.
Triebel-Lizorkin-Lorentz spaces and the Navier-Stokes equations,
Z. Anal. Anwend. 38 (2019) 41-72.





%%\vspace{-0.3cm}
\bibitem{Holmstedt}
Holmstedt T,
Interpolation of quasi-normed spaces.
Math. Scand. 26 (1970): 177-199.




%%\vspace{-0.3cm}
\bibitem{HP}
 Holmstedt T, Peetre J.
On certain functionals arising in the theory of interpolation spaces.
J Funct Anal, 1969, 4: 88-94.





%%\vspace{-0.3cm}
\bibitem{Hunt1}
Hunt R. A,
An extension of the Marcinkiewicz interpolation theorem to Lorentz spaces,
Bull. Amer. Math. Soc. 70 (1964): 803-807.




%%\vspace{-0.3cm}
\bibitem{Hunt}
Hunt R. A.
On $L(p,q)$ spaces,
Enseign. Math. (2) 12 (1966), 249-276.


\bibitem{HWJJ}
Son, Hwijae; Jang, Jin Woo; Han, Woo Jin; Hwang, Hyung Ju,
Sobolev training for physics-informed neural networks,
Commun. Math. Sci. 21 (2023), no. 6, 1679-1705.

%%\vspace{-0.3cm}
\bibitem{LacTh}
Lacey M and Thiele C,
$L^{p}$  estimates on the bilinear Hilbert transform for $2<p<\infty$,
Annals of Mathematics, 146 (1997): 693-724.




%%\vspace{-0.3cm}
\bibitem{LP}
Lions J.L and Peetre J,
Sur une classe d'espaces d'interpolation,
Publ. Math. Inst. Hautes Etudes Sci. 19 (1964): 5-68.



%%\vspace{-0.3cm}
\bibitem{Litt}
Littman,
Walter Multipliers in $L^{p}$ and interpolation,
Bull. Amer. Math. Soc. 71 (1965): 764-766.



%\vspace{-0.3cm}
\bibitem{LiuYY}
Liu, J. Yang, D. C. Yuan, Wen,
Anisotropic Hardy-Lorentz spaces and their applications.
Science China Mathematics,
{\bf 9}, 1669-1720 (2016)




%\vspace{-0.3cm}
\bibitem{LCWG}
Liu T. Y., Chen W. Wang T.F. and Gao G.,
Distributed Machine Learning--Theories, Algorithms and Systems,
Beijing: China Machine Press, 2018.




%\vspace{-0.3cm}
\bibitem{LoP}
Lofstrom  E and Peetre J,
Approximation theorems connected with generalized translations,
Math. Ann. 181 (1969): 255-268.



%\vspace{-0.3cm}
\bibitem{Lorentz}
Lorentz, G. G.
Some new functional spaces. Ann. Math. 51 (1950), 37-55.


%\vspace{-0.3cm}
\bibitem{LYHH}
Lou Z.Z.,  Yang Q.X.,  He J.X. and  He K.L.,
wavelets and real interpolation of Besov spaces,
Mathematics, 2021,9, 2235. http://doi.org/10.3390/math91.82235,  (11 pages).
%Uniform analytic solutions for fractional Navier-Stokes equations,
%Applied Mathematics Letters, 112 (2021) 106784.








%\vspace{-0.3cm}
\bibitem{Meyer}
Meyer Y. Ondelettes et op\'erateur, I et II. Paris: Hermann, 1990.







%\vspace{-0.3cm}
\bibitem{OW}
Ou Y. M and Wang H,
A cone restriction estimate using polynomial partitioning,
J. Eur. Math. Soc. 24(2022): 3557-3595.





%\vspace{-0.3cm}
\bibitem{Peetre67}
Peetre J,
Absolute convergence of eigenfunction expansions,
Math. Ann. 169 (1967): 307-314.

\bibitem{Peetre67book}
Peetre J, Funderingar om Besov rum (In Swedish, Thoughts about Besov spaces) (ultra-provi sorisk
utgava).  Notes, Lund, 1967.

%\vspace{-0.3cm}
\bibitem{Peetre}
Peetre J.  New thoughts on Besov space. Duke University Math Series, Durham, 1976


%\vspace{-0.3cm}
\bibitem{PCFGLM}
J. Peetre, W. Connet, J. Fournier, J. E. Gilbert, J.L. Lions and Benjamin Muckenhoupt,
Problems in interpolation of operations and applications I,
Notices Amer. Math. Soc.  Issue No 160, Vol 22 (2) (1975), 124-126.





%\vspace{-0.3cm}
\bibitem{PS}
Peetre J and Sparr G,
Interpolation of Normed Abelian Groups,
Ann. Mat. Pura Appl. 92 (1972): 217-262.



%\vspace{-0.3cm}
\bibitem{R}
Riesz, M.
Sur les maxima des formes bilinearies et sur les fonctionelles lineaires.
Acta Math. 49, 465-497
(1926).




%\vspace{-0.3cm}
\bibitem{SaZ}
R. Salem, A. Zygmund,
A convexity theorem.
Proc. Nat. Acad. Sci. U.S.A. 34 (1948), 443-447.





%\vspace{-0.3cm}
\bibitem{Schur}
Schur, I.
Bemerkungen zur Theorie der beschrankten Bilinearformen mit unendlich vielen Veranderlichen.
J. Reine Angew. Math. 140, 1-28 (1911).



%\vspace{-0.3cm}
\bibitem{ST}
Seeger A,  Trebels W.
Embeddings for spaces of Lorentz-Sobolev type,
Math. Ann. 373 (2019) 1017-1056.


%\vspace{-0.3cm}
\bibitem{Stein}
E.M. Stein,
Some problems in harmonic analysis.
In: Harmonic Analysis in Euclidean Spaces
(Williamstown, MA, 1978), Part 1,
Proc. Sympos. Pure Math. 35,
Amer. Math. Soc., Providence, RI, 3-20 (1979).



%\vspace{-0.3cm}
\bibitem{SHSTBR}
E.M. Stein, Richard A. Hunt, Robert C. Sharpley, A. Torchinsky, Colin  Bennett, N.M. Riviere,
Problems in interpolation of operations and applications II,
Notices Amer. Math. Soc. Issue No 162, Vol 22 (4) (1975), 199-200.


%\vspace{-0.3cm}
\bibitem{SW}
Stein, E. and Weiss, G.,
Interpolation of operators with change of measures.
Trans. Amer. Math. Soc., 87 (1958), 159-172.





%\vspace{-0.3cm}
\bibitem{SZ}
Stein E.M and Zygmund A,
Boundedness of translation invariant operators on H\"older spaces  and $L^{p}-$spaces,
Annals of Mathematics, Vol. 85, No. 2 (1967): 337-349.




%\vspace{-0.3cm}
\bibitem{Thorin}
G. O. Thorin,
Convexity theorems. Medd. Lunds Univ. Mat. Sem. 9 (1948), 1-57.



%\vspace{-0.3cm}
\bibitem{Triebel}
Triebel H.  Spaces of distributions of Besov type on Euclidean n-space. Duality, interpolation. Ark Mat, 1973, 11:
13-64.

%\vspace{-0.3cm}
\bibitem{TriebelB}
Triebel  H.  Theory of function spaces.  Boston: Birkhauser Verlag, 1983

%\vspace{-0.3cm}
\bibitem{YDC}
Yang D C. Real interpolations for Besov and Triebel-Lizorkin spaces on spaces of homogeneous type. Math Nachr, 2004, 273: 96-113.


%\vspace{-0.3cm}
\bibitem{Yang1}
 Yang Q X. Wavelet and Distribution. Beijing: Beijing Science and Technology Press, 2002

%\vspace{-0.3cm}
\bibitem{Yang2}
 Yang Q X. Introduction to Harmonic analysis and wavelets. Wuhan: Wuhan University Press, 2012

%\vspace{-0.3cm}
\bibitem{YCP}
Yang Q X, Cheng Z X,  Peng L Z.
Uniform characterization of function spaces by wavelets. Acta Math Sci Ser A Chin Ed,
2005, 25: 130-144.




%\vspace{-0.3cm}
\bibitem{YYZH}
Qixiang Yang,  Haibo Yang, Bin Zou and  Jianxun He,
Peetre conjecture on real interpolation spaces of Besov spaces and Grid K functional.
preprint.



%\vspace{-0.3cm}
\bibitem{YSY} W. Yuan, W. Sickel, D. Yang,
Morrey and Campanato Meet Besov, Lizorkin and Triebel,
Lecture Notes in Mathematics, Paris, 2005.


%\vspace{-0.3cm}
\bibitem{ZYY}
Zhuo, C., Q., Yang, D., C., Yuan, W.,
Interpolation between $H^{p(\cdot)}(\mathbb{R}^{n})$ and $L^{\infty} (\mathbb{R}^{n})$: real method.
J. Geom. Anal. {\bf 3, 28}, 2288-2311(2018)


\end{thebibliography}
\end{document}